\newtheorem{theorem}{Theorem}[section]
\newtheorem{lemma}[theorem]{Lemma}
\newtheorem{proposition}[theorem]{Proposition}
\newtheorem{corollary}[theorem]{Corollary}
\newtheorem{definition}[theorem]{Definition}
\theoremstyle{remark}
\numberwithin{equation}{section}
\def\section{\@startsection{section}{1}%
  \z@{1.5\linespacing\@plus\linespacing}{.5\linespacing}%
  {\normalfont\bfseries\large\centering}}
\newcommand{\be}{\begin{equation}}
	\newcommand{\ee}{\end{equation}}
\newcommand{\bea}{\begin{eqnarray}}
	\newcommand{\eea}{\end{eqnarray}}
\newcommand{\bee}{\begin{eqnarray*}}
	\newcommand{\eee}{\end{eqnarray*}}
\def\pa{\partial}
\def\CC{\mathbb{C}}
\def\NN{\mathbb{N}}
\def\RR{\mathbb{R}}
\def\SS{\mathcal{S}}
\def\ut{\tilde{u}}
\def\wt{\tilde{w}}
\def\calA{{\mathcal A}}
\def\calH{{\mathcal H}}
\def\calO{{\mathcal O}}
\def\calS{{\mathcal S}}
\def\calV{{\mathcal V}}
\def\calW{{\mathcal W}}
\def\calV{{\mathcal V}}
\def\calS{{\mathcal S}}
\def\supess{\mathop{\operator@font Sup\,ess}}
\def\Qtb{\tilde{Q}_b}
\def\CC{\mathbb{C}}
\def\NN{\mathbb{N}}
\def\RR{\mathbb{R}}
\def\SS{\mathbb{S}}
\def\a{\alpha}
\def\e{\varepsilon}
\def\bar#1{{\overline #1}}
\def\R2+{\RR ^2_+}
\def\pa{\partial}
\def\lim{\mathop{\rm lim}}
\def\supp{{\rm supp}~}
\def\sup{\mathop{\rm sup}}
\def\l{\lambda}
\def\log{{\rm log}}
\def\ut{\tilde{u}}
\def\pa{\partial}
\def\pa{\partial}
\def\la{\langle}
\def\ra{\rangle}
\def\S{\mathcal S}
\begin{document}

\title[]{Mode stability for self-similar blowup of slightly supercritical NLS: II. high-energy spectrum}


\author[Z. Li]{Zexing Li}
\address{Laboratoire Analyse, G\'eom\'etrie et Mod\'elisation,
CY Cergy Paris Universit\'e, 2 avenue Adolphe Chauvin, 95300, Pontoise, France}
\email{zexing.li@cyu.fr}

\maketitle

\begin{abstract} 
    In continuation of the study in \cite{Limodestability1}, we prove the high-energy mode stability for linearized operator around self-similar profiles in \cite{MR4250747} for slightly mass-supercritical NLS in $1 \le d \le 10$. This concludes the asymptotic stability of such self-similar blowup, and answers the question from \cite{MR4250747} and \cite{MR2729284}. As a byproduct, we characterize the spectrum for linearized operator around mass-critical ground state for $1 \le d \le 10$, which could be useful for future studies of asymptotic behavior near ground state.  

    The core idea is a linear Liouville argument, originated by Martel-Merle \cite{MR1826966,MR1753061} studying soliton stability, to reformulate the eigen problem as rigidity of linear dynamics so as to introduce modulation and to apply nonlinear dynamical controls. Our controlling quantities were verified with numerical help in \cite{MR2150386,MR2252148,MR3841347} for $1 \le d \le 10$.
\end{abstract}


\section{Introduction}

\subsection{Setting and background}
We consider as in \cite{Limodestability1} the nonlinear Schr\"odinger equation
\be \left\{\begin{array}{l} i\pa_t u+\Delta u+u|u|^{p-1}=0\\ u_{|t=0}=u_0\end{array}\right. \tag{NLS} \label{eqNLS}\ee 
for spatial dimension $d \ge 1$ and $p > 1$.
In this article, 
we are interested in the asymptotic stability of self-similar singularity for nonlinearities slightly above the mass conservation law:
\be
\label{ranges}
0<s_c := \frac d2 - \frac{2}{p-1} \ll 1.
\ee
The existence and stability of such dynamics in this range with $1 \le d \le 10$ were firstly verified by Merle-Rapha\"el-Szeftel \cite{MR2729284}.

\mbox{}

Let us recall the setting and main results of \cite{Limodestability1}. 

Let $Q_b$ with $b = b(s_c)$ be the finite-energy self-similar profile constructed in Bahri-Martel-Rapha\"el \cite{MR4250747} (its existence and asymptotics are recorded in Proposition \ref{propQbasymp}), solving the elliptic equation 
\be \Delta Q_b-Q_b+ib\left(\frac{2}{p-1}Q_b+y \cdot \nabla Q_b\right)+Q_b|Q_b|^{p-1}=0.\label{eqselfsimilar} \ee
Then it generates a \textit{self-similar} blowup solution of \eqref{eqNLS}
\[ u(t, x) = (1-2bt)^{-\frac{1}{p-1}}  Q_b\left( \frac{x}{\sqrt{1-2bt}} \right)e^{-i\frac{1}{2b}\ln\left(\frac{1}{2b} - t\right)}. \]
To study its asymptotic stability, we renormalize \eqref{eqNLS} in the self-similar coordinates where $Q_b$ is a stationary solution, and consider its linearized operator
\be \calH_b = \left( \begin{array}{cc} \Delta_b -1 &  \\ &  -\Delta_{-b} +1\end{array} \right) -ibs_c + \left( \begin{array}{cc} W_{1, b} & W_{2, b}  \\ -\overline{W_{2, b}} & -W_{1, b}   \end{array} \right),\label{eqdefcalH} \ee
	with 
\be  \Delta_b = \Delta + ib \Lambda_0, \quad \Lambda_0 := \frac d2 + x\cdot\nabla \label{eqdefDeltabLambda0} \ee
\be  W_{1, b} = \frac{p+1}{2}|Q_b|^{p-1}, \quad W_{2, b} = \frac{p-1}{2} |Q_b|^{p-3}Q_b^2. \label{eqdefW1bW2b} \ee 

Notice that as $b \to 0$, $\calH_b$ degenerates to  $\calH_0$
\bea
 \calH_0 = \left(\begin{array}{cc}
     \Delta - 1 &  \\
      & -\Delta + 1
 \end{array}  \right) + \left(\begin{array}{cc}
     W_1 & W_2 \\
     -W_2 & -W_1
 \end{array}  \right) \label{eqdefH0} \\
 W_1 = \frac{p_0+1}{2} Q^{p_0-1},\quad W_2 = \frac{p_0-1}{2} Q^{p_0-1}. \label{eqdefW1W20}
\eea
where $p_0 = \frac 4d + 1$ and $Q$ is the mass-critical ground state, namely the positive radial $H^1$ solution of 
\be \Delta Q - Q + Q^{p_0} = 0\quad {\rm on\,\,} \RR^d. \label{eqgroundstatemasscritical} \ee 
In particular, $\calH_0$ is the linearized operator around $Q$ in \eqref{eqNLS}. 
The generalized kernel of $\calH_0$ was characterized in classical works \cite{MR0783974} and \cite{kwong1989uniqueness} (also see \cite[Section 2.1]{chang2008spectra}), which verifies the Figure \ref{fig:coffee}(A) near the origin. That is the starting point of our bifurcation analysis for $\calH_b$. 

The main result of \cite{Limodestability1} characterizes the spectrum of $\calH_b\big|_{(\dot H^{\sigma})^2}$ in a neighborhood of origin with $0 < \sigma - s_c \ll 1$ as indicated in Figure \ref{fig:coffee}(B). In particular, the only unstable eigenvalues below the essential spectrum $\sigma_{\rm ess}(\calH_b)\big|_{(\dot H^{\sigma})^2} = \RR + ib(\sigma -s_c)$ with $|\l| \ll 1$ are $0, -bi, -2bi$, with eigenfunctions generated by phase rotation, spatial translation and scaling symmetries.

\begin{figure}[h!]
		\centering
		\begin{subfigure}[b]{0.45\linewidth}
			\includegraphics[width=\linewidth]{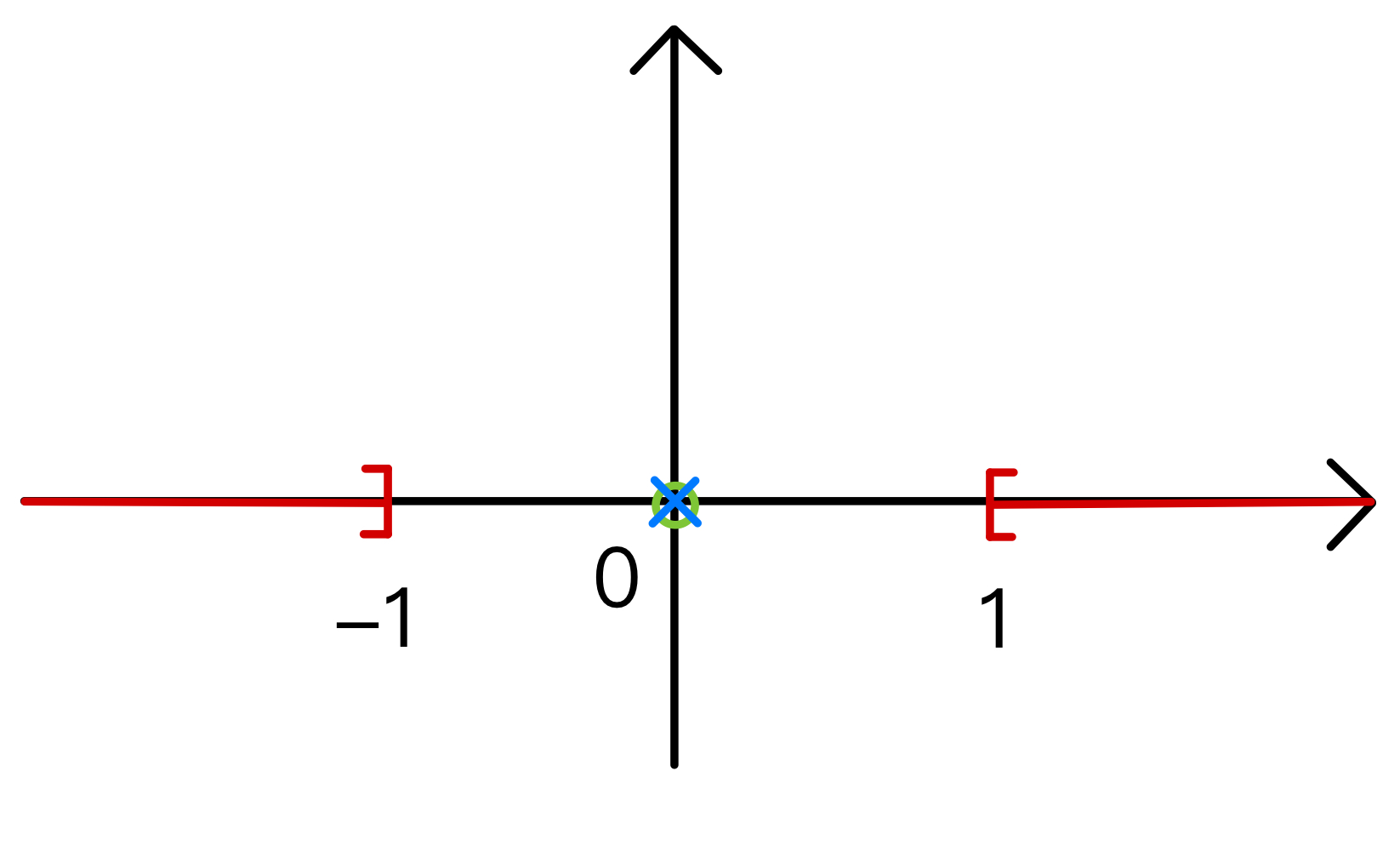}
			\caption{$\sigma(\calH_0 \big|_{(L^2)^2})$}
		\end{subfigure}
		\begin{subfigure}[b]{0.45\linewidth}
			\includegraphics[width=\linewidth]{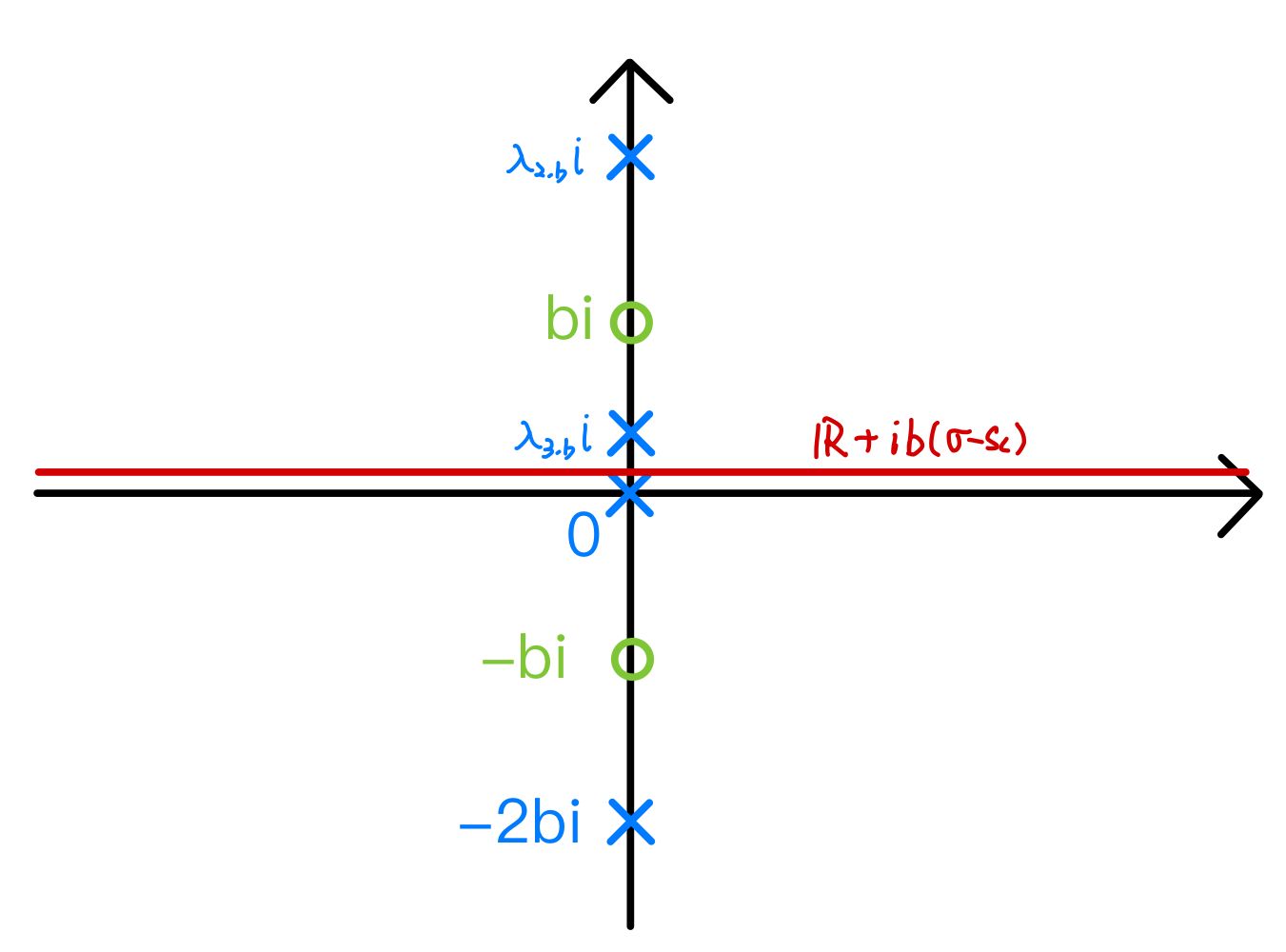}
			\caption{$\sigma(\calH_b \big|_{(\dot H^\sigma)^2})$}
		\end{subfigure}
		\caption{Spectrum of $\calH_0$ and $\calH_b$ near the origin: red line for $\sigma_{\rm ess}$, blue cross for eigenpairs in radial class, and green circle for eigenpairs in first spherical classes.}
		\label{fig:coffee}
\end{figure}

In this work, we aim to prove non-existence of unstable eigenvalues of $\calH_b$ away from the origin in $d \le 10$. As a byproduct of our analysis, we will also show non-existence of eigenvalues or resonances for $\calH_0$ away from the origin, which was unknown for $2 \le d \le 10$. In other words, 
our purpose is to verify Figure \ref{fig:coffee}(B) for $\Im \l < b(\sigma - s_c)$ and Figure \ref{fig:coffee}(A).

\mbox{}

\noindent\textit{Characterization of spectrum for Schr\"odinger-type operators.} In complement of the discussion of mode stability of self-similar profiles in \cite{Limodestability1}, we hereby emphasize the other aspect of our spectral problem: the Schr\"odinger-type linearized operator. 

The standard example is linearized operator around ground states of NLS of the form \eqref{eqdefH0}. Compared with linearized operator from semilinear parabolic equations or wave-type equations, the matrix nature brings in non-self-adjointness and possibility of embedded eigenvalues and resonances. Luckily, the structure of matrix potential implies a useful anti-diagonalization: 
\[ U \calH_0 U^{-1} = \left(\begin{array}{cc} 0 & iL_+ \\ -iL_- & 0 \end{array} \right)\quad {\rm where} \quad U = \frac 12 \left(\begin{array}{cc} 1 & 1 \\ -i & i \end{array} \right), \]
with the linear operators
\be L_\pm = -\Delta + 1 - W_1 \mp W_2. \label{eqdefLpm} \ee
Characterizing its spectrum is a core step towards asymptotic stability of ground states. For this active direction of research, we refer interesting readers to the survey \cite{zbMATH05208209,zbMATH07439618} and recent works \cite{arXiv:2408.15427,collot2023asymptotic,zbMATH07895064,zbMATH07991635} with references therein. 

For spectrum characterization related to ground states of \eqref{eqNLS}, the 1D case was relatively clear thanks to the explicit formula of the soliton and spectrum of corresponding $L_\pm$. The case $p \ge 5$ is done in \cite{MR1334139} and \cite{MR2219305}, and $p=3$ follows from the integrability structure (see for example \cite[Proposition 3.1]{arXiv:2408.15427} and the references therein). We also mention the recent result \cite{arXiv:2503.02957} in excluding embedded eigenvalues for 1D ground states of NLS with general nonlinearities.

For higher dimensions, notably, characterization of spectrum for the 3D cubic case is completed through a sequence of works: \cite{zbMATH04071530} for existence and uniqueness of stable/unstable eigenpairs, \cite{MR2480603} and \cite{MR2214946} showing non-existence of endpoint resonances and internal mode via argument in \cite{MR1334139} and computer-assisted proof for verification of a gap property for $L_\pm$, and finally \cite{marzuola2010spectral} in excluding embedded eigenvalues by numerically verifying the coercivity of a special quadratic form (see \cite[Section 2.3]{beceanu2012critical} for a nice summary). Although the method and result can be generalized, unfortunately, the mass-critical case when $d \ge 2$ seems out of reach as discussed in \cite{marzuola2010spectral}.

\subsection{Main results} The main input of this paper is the following two theorems, addressing the characterization of spectrum for $\calH_0$ and high-energy spectrum of $\calH_b$ respectively. The latter result combined with \cite{Limodestability1} and the author's previous work \cite{li2023stability} concludes the asymptotics stability of $Q_b$, which answers the question from \cite{MR4250747} and \cite{MR2729284}.

\begin{theorem}[Characterization of spectrum for $\calH_0$]\label{thmspecH0}
    For $1 \le d \le 10$, consider $\calH_0$ as a closed operator on $(L^2(\RR^d))^2$, the following statements hold true.
    \begin{enumerate}
        \item  $\sigma_{\rm ess}(\calH_0) = (-\infty, -1] \cup [1, \infty)$, $\sigma_{\rm disc}(\calH_0) = \{ 0 \}$ with the generalized nullspace given as \eqref{eqnullspaceHL2};
        \item There are no embedded eigenvalues in $\sigma_{\rm ess}(\calH_0)$, and the edges $\pm 1$ are not resonances\footnote{Defined in Definition \ref{defres}.}. 
    \end{enumerate}
\end{theorem}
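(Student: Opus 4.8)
The plan is to separate the classical behaviour near $0$ from the new content away from it: the essential spectrum and the Jordan structure at the origin are handled by standard tools, while the absence of all other spectrum and of resonances follows from the dynamical argument of the paper applied to $\calH_0$ (i.e.\ at $b=0$). For the essential spectrum I would write $\calH_0 = \mathrm{diag}(\Delta-1,\,-\Delta+1) + V$, with $V$ multiplication by $\left(\begin{smallmatrix} W_1 & W_2\\ -W_2 & -W_1\end{smallmatrix}\right)$; since $Q$ decays exponentially, so do $W_1, W_2$, hence $V$ is relatively compact with respect to the diagonal free part and a standard relatively-compact-perturbation argument gives $\sigma_{\rm ess}(\calH_0) = \sigma_{\rm ess}\big(\mathrm{diag}(\Delta-1,-\Delta+1)\big) = (-\infty,-1]\cup[1,\infty)$. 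That $0$ is an eigenvalue with generalized nullspace \eqref{eqnullspaceHL2} --- spanned by the modes obtained by applying the symmetry generators of the mass-critical flow to $Q$ (phase, translation, Galilean, scaling and, by $L^2$-criticality, pseudo-conformal), together with the extra Jordan vector reflecting $\frac{d}{d\lambda}\|Q_\lambda\|_{L^2}^2=0$ --- is classical, following from the nondegeneracy of $Q$ \cite{MR0783974,kwong1989uniqueness} and the spectral analysis of \cite{chang2008spectra} (see also \cite{Limodestability1}), which moreover rule out spectrum of $\calH_0$ in a punctured neighbourhood of $0$. The theorem thus reduces to three claims: (i) no eigenvalue in $(-1,1)$ at distance $\ge\delta_0$ from $0$ for every fixed $\delta_0>0$, in particular no genuinely complex eigenvalue; (ii) no embedded eigenvalue in $(-\infty,-1)\cup(1,\infty)$; (iii) $\pm1$ are not resonances in the sense of Definition~\ref{defres}.

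For (i)--(iii) I would run the linear Liouville argument of Martel--Merle \cite{MR1826966,MR1753061} that underlies the whole paper, specialized to $b=0$ --- simpler than the $\calH_b$ case since $Q$ is genuinely stationary and there are no $b$-error terms. Given $\calH_0\Psi=\lambda\Psi$ with $\Psi\in(L^2)^2$ for (i), (ii), or $\lambda=\pm1$ and $\Psi$ a resonance function in the weighted space of Definition~\ref{defres} for (iii), the eigenpair produces, through the linearized flow, a global-in-time solution $w(t)$ with $w(0)=\Psi$ which is uniformly bounded (in fact norm-constant) when $\lambda\in\RR$ and grows exponentially of a definite sign as $t\to+\infty$ or $t\to-\infty$ when $\lambda\notin\RR$. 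Anti-diagonalizing via $U$ and using $L_\pm$ from \eqref{eqdefLpm}, I would decompose $w(t)$ orthogonally into a finite-rank part along the generalized nullspace (evolving here by the explicit Jordan-block flow, paralleling the modulation used for $\calH_b$) plus a transverse remainder $w^\sharp(t)$, and track a virial/Morawetz quadratic functional $\calA$ adapted to $L_\pm$. The key input is a coercivity bound $\calA[w^\sharp]\gtrsim\|w^\sharp\|^2$ on the orthogonal complement of the generalized kernel, after removal of the radiative part --- this is precisely the ``controlling quantity'' whose positivity is verified with numerical assistance in \cite{MR2150386,MR2252148,MR3841347} for each $1\le d\le10$.

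The contradiction then comes from a virial monotonicity identity: the time-derivative of $\calA$ along $w^\sharp(t)$ dominates $\|w^\sharp(t)\|^2$ up to boundary terms, so integrating and using the coercivity bounds $\int_\RR\|w^\sharp(t)\|^2\,dt$ by $\sup_t\calA[w^\sharp(t)]$, which is finite whenever $\|w(t)\|$ is bounded; hence $w^\sharp\equiv0$ and $w$ lies in the time-independent generalized nullspace. This is impossible for $\lambda\neq0$ (case (i)); in the embedded case (ii) it is impossible because that space is exponentially localized whereas for $|\lambda|>1$ the continuous channel is open, and the virial term quantifies the nonvanishing outgoing flux --- forcing it to zero returns to the previous contradiction after a unique-continuation/Agmon step. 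When $\lambda\notin\RR$ the definite sign of the growth of $w$ is incompatible with the sign of the boundary terms in the monotonicity as $t\to\pm\infty$ (this case can alternatively be handled classically, since $L_+$ has exactly one negative eigenvalue and $Q$ is $L^2$-critical). For (iii) I would supplement the dynamical argument with a resolvent/parametrix expansion of $(\calH_0-z)^{-1}$ near $z=\pm1$, which pins down the admissible class of resonance functions and shows that such a function carries enough decay to feed the virial identity --- again forcing it to vanish or to improve to an $L^2$ eigenfunction already excluded by (i)--(ii).

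I expect the crux to be establishing the virial coercivity with an \emph{explicit, numerically verifiable} constant, uniformly over $1\le d\le10$ and over the distinct spectral regimes (interior of the gap, the thresholds $\pm1$, the embedded range): this requires quotienting out the full generalized kernel of $\calH_0$, whose dimension grows with $d$; choosing the Morawetz weight and the finite-rank correction so that $\calA$ is both manifestly positive once the numerical input is inserted and compatible with the monotonicity identity; and closing the modulation system (trivial for $b=0$ but structurally as in the $\calH_b$ setting). A secondary difficulty is the threshold analysis at $\pm1$: the matrix structure and the absence of $L^2$ decay make the edge expansion of the resolvent and the characterization of resonance functions delicate, and handling it is precisely what upgrades the statement, previously open for $2\le d\le10$, to the full range $1\le d\le10$.
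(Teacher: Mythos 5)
Your overall roadmap — Weyl/relatively-compact-perturbation for the essential spectrum, the classical nondegeneracy results of \cite{MR0783974,kwong1989uniqueness,chang2008spectra} for the generalized nullspace at $0$, and a linear Liouville argument powered by the numerically-verified coercivity of \cite{MR2150386,MR2252148,MR3841347} — is exactly the architecture of the paper's proof, and your modulation-onto-the-kernel step is correct in spirit. But the proof you sketch diverges from the paper in two places that matter, and one of them is a genuine structural gap.

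First, you try to use a single virial-type functional $\calA$ for all spectral regimes and argue from $\sup_t\calA[w^\sharp(t)]<\infty$. For $\lambda\in\RR$ that works (and is essentially the paper's argument: the truncated Virial $I(t)=-2\Re(\Lambda_R\tilde u,\tilde w)_{L^2}$ is \emph{conserved} because $|\tilde u(t)|,|\tilde w(t)|$ are constant in $t$, while $\partial_t I$ is shown nonnegative and coercive, so $\tilde u_0=\tilde w_0=0$ outright). But for $\Im\lambda\neq 0$ the Virial is an \emph{indefinite} bilinear form, so ``$\partial_t\calA\gtrsim\|w^\sharp\|^2$ with $\calA$ bounded'' does not read off a contradiction; one would have to argue delicately about the sign of the quadratic form along the exponentially growing flow. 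The paper instead switches functional: it uses the conserved \emph{energy} $E(t)=(L_+u,u)+(L_-w,w)$, which is coercive under the modulation orthogonality (this only needs the classical coercivity of $L_\pm$, not the numerical Virial input), so $E(t)\equiv E(0)$ while $E(t)\sim e^{-2\Im\lambda t}\|X_0\|_{H^1}^2$ forces $X_0=0$ immediately. You should separate the two cases the same way; otherwise your monotonicity step does not close for complex $\lambda$.

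Second, you add a unique-continuation/Agmon step for embedded eigenvalues and a resolvent/parametrix expansion near $\pm1$ for resonances. Neither is in the paper, and neither is needed. Because the Virial is truncated by $\chi_R$ to a compact region, the quantity $I(t)$ and its time-derivative are finite for any $H^\infty_{\rm loc}\cap\dot H^1$ function, so embedded eigenvalues are handled \emph{identically} to gap eigenvalues; the only preparatory input for resonances is a regularity lemma (the paper's Lemma~\ref{lemresonancereg}, imported from \cite{MR2219305,MR2480603}) giving $\nabla f\in L^2$ and $f\in H^\infty_{\rm loc}$, after which the same truncated-Virial computation applies verbatim. Bringing in an Agmon/outgoing-flux or resolvent-expansion argument would be a substantial detour and is unnecessary once one notices the truncation already localizes everything. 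With these two corrections — split the non-real case off to the conserved energy, and let the truncation do the work in the embedded/resonance cases — your proposal matches the paper's proof.
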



\begin{theorem}[Mode stability of $\calH_b$ for high-energy spectrum]\label{thmextspec}
Let $1 \le d \le 10$. For any $\delta > 0$, there exists $0 < s_{c,3}(\delta) \ll 1$ such that for any
$0 < s_c \le s_{c,3}(\delta)$, $b, Q_b$ constructed as in \cite{MR4250747}, and any $\sigma \in (s_c, s_c + b^{d+4} )$, the discrete spectrum of $\calH_b$ satisfies
\be
  \sigma_{dist}\left(\calH_b\big|_{(\dot H^\sigma)^2}\right) \cap \left\{ z \in \CC: \Im z < b(\sigma -s_c), |z| \ge \delta \right\} = \emptyset.
\ee
    
\end{theorem}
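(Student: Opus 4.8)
The plan is to argue by contradiction via a \emph{linear Liouville} scheme in the spirit of Martel--Merle \cite{MR1826966,MR1753061}. Suppose, for some $\delta>0$, that along a sequence $s_c\to0$ (hence $b\to0$) with admissible $\sigma\in(s_c,s_c+b^{d+4})$ there is an eigenpair $\calH_b\phi=\lambda\phi$, $\phi\in(\dot H^\sigma)^2$ normalized, with $|\lambda|\ge\delta$ and $\Im\lambda<b(\sigma-s_c)$. First I would record that, since $\lambda$ lies strictly below the essential spectrum $\RR+ib(\sigma-s_c)$, a Combes--Thomas/Agmon-type estimate — combined with the tail asymptotics of $Q_b$ from Proposition~\ref{propQbasymp} — yields exponential decay of $\phi$ and of its derivatives, placing $\phi$ in the functional framework where the nonlinear dynamical controls of \cite{Limodestability1,li2023stability} apply. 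Decomposing into spherical harmonic sectors reduces the problem to the radial and the first (vector) sector, in each of which the modulation will involve only finitely many parameters, attached to phase and scaling in the radial sector and to translation in the $\ell=1$ sector.

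The eigenfunction produces the exact solution $\varepsilon(s)=e^{-i\lambda s}\phi$ of the linearized self-similar flow. The Liouville step reinterprets this solution through the nonlinear symmetries: using the generators of phase, scaling and translation — whose span is the generalized kernel of $\calH_b$, a $b$-deformation of the classical generalized kernel of $\calH_0$ from \cite{MR0783974,kwong1989uniqueness} — I would introduce time-dependent modulation parameters and write $\varepsilon(s)=\sum_j a_j(s)\,g_j+\varepsilon^\perp(s)$ with $\varepsilon^\perp$ orthogonal to that generalized kernel (equivalently, via the anti-diagonalization $U\calH_0U^{-1}$ recalled above, orthogonal to $Q$, $\Lambda_0 Q$, $\nabla Q$ and their $b$-corrections). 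The modulation equations control the parameters and the $a_j$, and reduce matters to a rigidity statement: $\varepsilon^\perp$ is a global, suitably bounded solution of a linearized flow restricted to the ``coercive'' part of the spectral decomposition.

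The core of the argument is then a monotonicity/coercivity estimate for $\varepsilon^\perp$. On the orthogonal complement of the kernel the quadratic forms attached to $L_+$ and $L_-$ are positive, and — this is where the quantities numerically verified in \cite{MR2150386,MR2252148,MR3841347} enter — there is a virial/Morawetz-type functional $\calJ[\varepsilon^\perp]$, bounded in terms of $\|\varepsilon^\perp\|$, whose derivative along the flow is bounded below by a coercive local norm of $\varepsilon^\perp$, up to errors that are $O(b)$ and controlled by the modulation terms. Integrating this differential inequality against the boundedness of $\calJ$ forces $\varepsilon^\perp\equiv0$; hence $\varepsilon$, and therefore $\phi$, lies in the generalized kernel of $\calH_b$, whose eigenvalues are $0,-ib,-2ib$ by \cite{Limodestability1}. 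Since these have modulus $\lesssim b\ll\delta$, this contradicts $|\lambda|\ge\delta$ and proves the theorem. Running the same scheme at $b=0$ gives the absence of discrete eigenvalues of $\calH_0$ away from the origin in Theorem~\ref{thmspecH0}; for embedded eigenvalues on $(-\infty,-1]\cup[1,\infty)$ and for the endpoint resonances at $\pm1$ one supplements the Liouville step with a limiting-absorption analysis matching the threshold decay rate $|y|^{-(d-2)/2}$ against the virial weight.

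The step I expect to be the main obstacle is making the modulation decomposition and, above all, the coercivity of $\calJ$ \emph{uniform as $b\to0$}: the essential-spectrum threshold $b(\sigma-s_c)$ collapses onto the real axis, the profile $Q_b$ belongs only to $\dot H^{s_c}$ and not to $L^2$, so its slowly decaying tails must be tracked through every integration by parts, and the spectral gap used for $\varepsilon^\perp$ degenerates together with the kernel structure of $\calH_0$. Closing the estimates therefore requires quantitative error bounds in powers of $b$ and of $\sigma-s_c$ (the exponent $d+4$ above being what renders the relevant error terms negligible), and — for the $b=0$ endpoint case feeding Theorem~\ref{thmspecH0} — the sharp treatment of threshold resonances, precisely the point that was out of reach in $d\ge2$ in earlier approaches such as \cite{marzuola2010spectral}.
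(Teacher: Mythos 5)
Your overall plan — reinterpret the eigenfunction as a stationary solution of the linearized flow, modulate against the generalized kernel of $\calH_b$, and close with a coercive virial-type quantity whose positivity comes from the numerically verified \cite{MR2150386,MR2252148,MR3841347} commutators — is exactly the strategy of the paper (Section~\ref{sec3}), and your identification of the main obstacle (uniformity as $b\to0$, with the slowly decaying tails of $Q_b$) is on target. Two substantive points, however, are wrong or missing.

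First, the claim that Combes--Thomas/Agmon estimates give \emph{exponential} decay of the eigenfunction is false here, and the error is not cosmetic. The potentials $W_{1,b},W_{2,b}$ decay only like $r^{-2}$ (Corollary~\ref{corodiffest}, Proposition~\ref{propQbasymp}), and the free part of $\calH_b$ contains the unbounded drift $ib\Lambda_0$; eigenfunctions below the $\dot H^\sigma$ essential spectrum decay only \emph{polynomially}, at the rate $r^{-d/2+s_c+\Im\lambda/b}$. This is precisely what the paper establishes by an ODE/variation-of-constants analysis in the far field (Lemma~\ref{lemapriorieigen}(2), proof in Appendix~\ref{appB1}). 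That polynomial rate is what makes the $d=2$ radial sector delicate (no sharp Hardy inequality) and forces the delocalization estimate \eqref{eqdelocalrad}, which you do not have. Your scheme, as written, integrates by parts assuming decay it does not have, and the error terms it would generate are not negligible.

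Second, you propose a single virial functional $\calJ$ whose time derivative is coercive. In fact the paper must split into two regimes according to $\Im\lambda$: when $\Im\lambda<-b^{4+d}$ a plain truncated energy $E_b=(L_{+,b}\ut,\ut)+(L_{-,b}\wt,\wt)-2\Re(N_b\ut,\wt)$ already works (since $\pa_t(e^{-2bs_ct}E_b)$ is nonpositive by the commutator coercivity while $E_b\ge0$ by the energy coercivity, and the explicit exponential time dependence forces $E_b\le0$), but when $|\Im\lambda|\le b^{4+d}$ this monotonicity degenerates and one must instead use a \emph{weighted} and \emph{twice-truncated} energy $E_{b,\mu}$ with weight $\varrho^\mu=(1+r^4)^{-\mu/4}$, $\mu=\Im\lambda/b+s_c$, and a carefully engineered $C^3$ cutoff $\psi_b$ on scale $b^{-2}$ whose second derivative is dominated by $b^{7/5}(\tilde\psi_b+r^{-11/5})$ \eqref{eqpsibrr}. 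The exponent $d+4$ in the statement is calibrated against this construction (and the a priori eigenvalue bound $|\Re\lambda|\lesssim b^{-d-1}$ from Lemma~\ref{lemapriorieigenvalue}), not merely against generic $O(b)$ modulation errors. Without this split and this particular choice of weight/cutoff, the differential inequality you invoke does not close. Finally, a smaller point: the modulation is against \emph{truncated} profiles $\tilde Q_b=Q_b\chi_{R_b}$ rather than $Q_b$ itself — since $Q_b\notin L^2$ — and the orthogonality you want is only \emph{almost}-orthogonality with $O(|\lambda|^{-4}b^{1/6})$ errors derived from $(\calH_b^*)^4$ acting on $N_g(\calH_0^*)$; this is where the hypothesis $|\lambda|\ge\delta$ enters, whereas your sketch never makes the lower bound on $|\lambda|$ do any work.
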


Notice that Theorem \ref{thmextspec} and the low-energy mode stability \cite[Theorem 1.1]{Limodestability1} yield the full mode stability \cite[Assumption 1.2]{li2023stability} for $1 \le d \le 10$, $0 < s_c \ll 1$ and $0 < \sigma - s_c \ll 1$. Thus \cite[Theorem 1.4, Theorem 1.5]{li2023stability} lead to the following theorem. 



 \begin{theorem}[Asymptotical stability for $s_c \ll 1$.] \label{thmasympstab} Let $1 \le d \le 10$, $0 < s_c \ll 1$ small enough and $Q_b$ with $b = b(s_c)$ be the self-similar profile constructed in \cite{MR4250747}.
  Then for $0 < \sigma - s_c \ll 1$, the following statements hold:
  \begin{enumerate}
      \item Asymptotic stability in $\dot H^{\sigma}$: There exists $\epsilon_1 \ll 1$ such that for all $v_0 \in B_{\epsilon_1}^{\dot H^\sigma}$, there uniquely exists $(\l_0, x_0, \theta_0) \in \RR \times \RR^d \times \RR$ with 
  \be
    |\l_0 - 1| + |x_0| + |\theta_0| \lesssim \| v_0 \|_{\dot{H}^\sigma} \label{eqbddl0x0t0}
  \ee
  such that the initial data 
  \[ u_0 = Q_b + v_0 \]
  generates a solution blowing up at $T = \frac{\l_0^2}{2b}$ satisfying
  \bea 
      u(t, x) = \frac{1}{(\l_0^2 - 2b t)^{\frac{1}{p-1}}}  (Q_b + \e)\left(t, \frac{x-x_0}{\sqrt{\l_0^2-2bt}} \right)  e^{-i\left[\frac{\ln(\frac{1}{2b} - \l_0^{-2}t )}{2b} + \theta_0 \right]}, \label{eqss3}\\
        \| \e(t) \|_{\dot H^\sigma_x} \lesssim (1-2b \l_0^{-2} t)^{\frac{\sigma - s_c}{2}}.\label{eqss4}
   \eea
      \item Asymptotic stability in $H^1$: There exists an open set of initial data $\calO \subset H^1$ such that its solution to \eqref{eqNLS} blows up at $T = \frac{\l_0^2}{2b}$ and satisfies the decomposition \ref{eqss3} with $(\l_0, x_0, \theta_0) \in \RR \times \RR^d \times \RR$ bounded as \eqref{eqbddl0x0t0}. Moreover, we have the decay of perturbation as 
      \be \|\e(t)\|_{\dot{H}^\sigma \cap \dot H^1}\lesssim (T-t)^{\frac{\sigma - s_c}{2}},
	\label{eqbdd1H1} 
    \ee
    and 
    there exists $u_*\in \dot{H}^\sigma \cap \dot H^1$ and $u^* \in H^{\tilde \sigma}$ for every $0 \le \tilde \sigma < s_c$ such that 
		\bea
		u(t) - \frac{1}{\l(t)^{\frac{2}{p-1}}}Q_b \left( \frac{x}{\l(t)}\right)e^{i\tau(t)}  \to u_* && \mathrm{in}\,\,\dot{H}^\sigma\cap \dot H^1 \quad as\,\, t\to T,
		\label{eqbdd3H1} \\
        u(t) \to u^* && \mathrm{in}\,\,H^{\tilde \sigma} \quad \mathrm{as}\,\, t \to T. \label{eqL2profH1}
		\eea 
        The critical norms blow up as 
        \be \| u(t) \|_{L^{p_c}} \sim |\log(T-t)|^{\frac{1}{p_c}} (1+o(1)),\quad \| u(t)\|_{\dot{H}^{s_c}} \sim |\log(T-t)|^{\frac 12}(1+o(1)).\label{eqcriticalnorm} \ee
        when $t \to T$,
        where $p_c = \frac{2d}{d-2s_c}$.
  \end{enumerate}
 \end{theorem}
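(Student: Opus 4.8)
The plan is to deduce Theorem \ref{thmasympstab} directly by assembling the mode stability inputs with the conditional asymptotic stability machinery already developed in \cite{li2023stability}. The core observation is that \cite[Theorem 1.4, Theorem 1.5]{li2023stability} are conditional results: they prove exactly the statements (1)--(2) of Theorem \ref{thmasympstab} under the hypothesis \cite[Assumption 1.2]{li2023stability}, which posits full mode stability of $\calH_b$ in the spectral region $\{\Im z < b(\sigma - s_c)\}$ with $|z|$ bounded, i.e. that $0, -bi, -2bi$ (with their symmetry-generated eigenfunctions) are the only discrete spectrum of $\calH_b\big|_{(\dot H^\sigma)^2}$ below the essential spectrum line $\sigma_{\rm ess} = \RR + ib(\sigma - s_c)$. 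So the task reduces to verifying that this assumption holds for $1 \le d \le 10$, $0 < s_c \ll 1$, and $0 < \sigma - s_c \ll 1$.

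To verify the assumption, I would split the spectral region $\{z : \Im z < b(\sigma - s_c)\}$ into a low-energy part $|z| < \delta$ and a high-energy part $|z| \ge \delta$, for a fixed small $\delta > 0$. On the low-energy part, \cite[Theorem 1.1]{Limodestability1} already gives the complete count: for $0 < \sigma - s_c \ll 1$ the only eigenvalues of $\calH_b\big|_{(\dot H^\sigma)^2}$ with $|\l| \ll 1$ below the essential spectrum are $0, -bi, -2bi$ with the stated symmetry eigenfunctions. On the high-energy part, Theorem \ref{thmextspec} of this paper gives, for each fixed $\delta > 0$, a threshold $s_{c,3}(\delta)$ below which $\sigma_{\rm disc}(\calH_b\big|_{(\dot H^\sigma)^2}) \cap \{\Im z < b(\sigma - s_c), |z| \ge \delta\} = \emptyset$, uniformly over $\sigma \in (s_c, s_c + b^{d+4})$. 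Combining the two, with $\delta$ chosen small enough that Theorem \ref{thmextspec} applies in the complement of the region covered by \cite[Theorem 1.1]{Limodestability1}, and with $s_c$ taken small enough relative to that $\delta$ and $\sigma - s_c$ taken small enough relative to both, we obtain that $0, -bi, -2bi$ are the only discrete spectrum of $\calH_b\big|_{(\dot H^\sigma)^2}$ below the essential spectrum with modulus $O(1)$; this is precisely \cite[Assumption 1.2]{li2023stability}.

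With the assumption in hand, statements (1) and (2) of Theorem \ref{thmasympstab} are immediate transcriptions of \cite[Theorem 1.4]{li2023stability} and \cite[Theorem 1.5]{li2023stability} respectively: the modulation analysis, the construction of the Lipschitz manifold $\calO \subset H^1$, the decay rate $(T-t)^{(\sigma - s_c)/2}$ of the perturbation in $\dot H^\sigma$ (and in $\dot H^1$ on the manifold), the existence of the asymptotic profiles $u_* \in \dot H^\sigma \cap \dot H^1$ and $u^* \in H^{\tilde\sigma}$ for $\tilde\sigma < s_c$, and the logarithmic blowup of the critical norms \eqref{eqcriticalnorm} are all output verbatim once mode stability is supplied as the single remaining black box. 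The bound \eqref{eqbddl0x0t0} on the modulation parameters likewise comes from the quantitative modulation estimates of \cite{li2023stability}.

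The only genuine obstacle in this deduction is the bookkeeping of the three small parameters $\delta$, $s_c$, and $\sigma - s_c$ and their order of quantification. Theorem \ref{thmextspec} requires $s_c \le s_{c,3}(\delta)$ \emph{after} $\delta$ is fixed, while \cite[Theorem 1.1]{Limodestability1} requires $\sigma - s_c \ll 1$ \emph{after} $s_c$ is fixed (and implicitly controls the low-energy window in a $b$-dependent, hence $s_c$-dependent, way); one must check that the neighborhood of the origin handled by the low-energy theorem and the region $|z| \ge \delta$ handled by the high-energy theorem genuinely overlap, so that no eigenvalue can hide in a gap between the two regimes. This is a matter of tracking the dependencies already made explicit in the cited statements — in particular that the low-energy result of \cite{Limodestability1} covers a fixed (in $s_c$) neighborhood of the origin, or at worst one comparable to $b^{d+4}$ matching the constraint on $\sigma$ — rather than proving anything new, so the deduction is essentially immediate once the quantifiers are arranged correctly.
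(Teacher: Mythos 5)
Your proposal is exactly the paper's own deduction: the paper derives Theorem \ref{thmasympstab} by noting that Theorem \ref{thmextspec} together with \cite[Theorem 1.1]{Limodestability1} verify \cite[Assumption 1.2]{li2023stability}, and then invoking \cite[Theorems 1.4 and 1.5]{li2023stability} verbatim. Your extra care in arranging the quantifiers over $\delta$, $s_c$, and $\sigma - s_c$ is sensible but is not something the paper spells out; the conclusion and route are the same.
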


\mbox{}

\noindent\textit{Comments of Theorem \ref{thmspecH0} and Theorem \ref{thmextspec}.}

\mbox{}

\noindent\textit{1. Linear Liouville argument.}
The overall strategy we applied is called \textit{linear Liouville argument}, originated from Martel-Merle \cite{MR1826966,MR1753061} on gKdV soliton stability.
We consider eigenfunction as stationary solution of a linear evolutionary equation, so as to apply modulation argument and nonlinear dynamical control to prove its rigidity.\footnote{Although it is possible to avoid the evolutionary formulation, we prefer this dynamical way because otherwise it will sacrifice the clarity of modulation and insights for usage of energy/Virial identities, especially in the bifurcated problem.} The main edge of linear Liouville argument is to reduce the non-self-adjoint spectral problem to verifying coercivity of self-adjoint operators from some quadratic forms. Therefore, we expect this idea might be helpful in identifying the spectrum of other non-self-adjoint linearized operators around stationary states or their bifurcation. 

The natural nonlinear controlling laws for NLS are energy and Virial identities, and we exploit them in the linearized flow (Proposition \ref{propspecprop}). We stress that the coercivity of Virial commutator is proven with numerical help in \cite{MR2150386,MR2252148,MR3841347} for $1 \le d \le 10$. 
Originated from the log-log blowup analysis of Merle-Rapha\"el \cite{MR2150386}, this quantity also lies the foundation for the monotonicity formula used in the stability result of Merle-Rapha\"el-Szeftel \cite{MR2729284} (where it is referred to as \textit{spectral property} \cite[P. 1029]{MR2729284}).  Besides, our strategy can be viewed as a generalization of Marzuola-Simpson \cite{marzuola2010spectral} who also examined the same operator to characterize spectrum of $\calH_0$ for 3D cubic NLS, by clarifying its dynamical meaning and improving flexibility via introducing modulation.

We also mention Perelman's different argument in \cite{MR1852922} to prove high-energy mode stability for an operator like $\calH_b$. She first introduced an intermediate operator whose mode stability follows from being a compact perturbation of $\calH_0$, and mode stability of the target operator was derived by carefully evaluating the bifurcation of Jost function for high-energy. This argument is very general and robust, although dealing with high-energy Jost function uniformly can be quite complicated, especially for higher dimensions.

\mbox{}

\noindent{\textit{2. Adaption to the bifurcation operator $\calH_b$.}}
With the strategy and spectral property discussed above, the proof for $\calH_0$ case is straightforward, while the $\calH_b$ case requires more modification. Firstly, since the scaling generator naturally brings Virial commutator into the time-derivative of energy, the quantities to examine are two delicate (weighted and truncated) energies; secondly, we pick the sharp modulated directions via the algebraic relations related to $Q_b$, and the smallness for almost orthogonal condition restricts the potential eigenvalue $\l$ staying away from the origin. Finally, due to the weak decay $|W_{1, b}| + |W_{2, b}| \lesssim r^{-2}$ and lack of sharp Hardy inequality in the radial case for $d=2$, we need a delocalization estimate of possible eigenfunction through ODE analysis in the far exterior region.

\mbox{}

\noindent\textit{3. Extension of result.}
\begin{enumerate}
\item \textit{Higher dimensions:} As is discussed above and in the stability result Merle-Rapha\"el-Szeftel \cite{MR2729284}, the main theorems hold for other dimensions as long as the coercivity of Virial commutator (Proposition \ref{propspecprop} (2)) is true. 
    \item \textit{Linearization around ground states:} For $s_c \neq 0$, the generalized kernel of linearized operator has dimension $2d+2$, with the two radial generalized eigenfunction bifuracted to a pair of purely imaginary as $s_c > 0$ or real eigenvalues as $s_c < 0$ \cite[Theorem 2.6]{chang2008spectra}. Using these bifurcated eigenpairs, our method can lead to the characterization of spectrum for $d \le 10$ and $|s_c| \ll 1$. 
    \item \textit{Linearization around self-similar profile:} Our analysis can be adapted to the linear operator discussed in Perelman \cite{MR1852922} and its counterpart for $2 \le d \le 10$. However, we mention that extending the upper bound of $\sigma$ from $s_c + b^{d+4}$ to $O(b)$ as in the low-energy case requires new techniques or ideas, in view of the possible appearance of quantized eigenvalues (see the discussion in \cite[Comment 2 of Theorem 1.1]{Limodestability1}).
\end{enumerate}

\mbox{}

\subsection{Structure of the paper}

We will recall the asymptotics of $Q$, $Q_b$ and the known spectral properties of $\calH_0$ and $\calH_b$ in Section \ref{sec2}, including the crucial coercivity Proposition \ref{propspecprop}. Next, we prove Theorem \ref{thmspecH0} and Theorem \ref{thmextspec} in Section \ref{sec3}.

\subsection{Notations}
We use the following traditional notations: 
\begin{itemize}
\item $A \lesssim B$ means there exists $C$ such that $A \le CB$; $A \sim B$ means $A \lesssim B \lesssim A$.
    \item Japanese bracket $\la x \ra = (1 + |x|^2)^\frac 12$.
\item Ball of radius $\delta$ in Banach space $X$: $B_\delta^X = \{ x \in X: \| x \|_X < \delta \}$. The $X$ will be omitted if no ambiguity occurs, particularly when $X = \RR^d$.
\item Pauli matrix $\sigma_3 = \left( \begin{array}{cc}
    1 &  \\
     & -1
\end{array} \right)$. 
\item Weighted $\dot H^k$ space:
  $\| f \|_{\dot H^k_{\rho}} = \| \rho \nabla^k f \|_{L^2}$. Here $\nabla^k f = (\pa^{\vec \a} f)_{\substack{\vec \a \in \NN^d\\ \sum_i \a_i=k}}$ is understood as a vector-valued function.

\item Smooth radial cut-off function: Let 
\be \chi(r) = \left| \begin{array}{ll}
    1 & r \le 1 \\
    0 & r \ge \frac 32
\end{array}\right. \in  C^\infty_{c, rad}(\RR^d),\quad \chi_R = \chi(R^{-1}\cdot). \label{eqdefchiR} \ee
\end{itemize}

Besides, we have the following two conventions for notational simplicity:
\begin{itemize}
    \item For parameters $s_c$ and $b$: We assume throughout this paper that $0 < s_c \ll 1$ so that $b = b(s_c, d)$ and $Q_b$ from Proposition \ref{propQbasymp} exists; and we always denote $b = b(s_c, d)$ from it, and correspondingly $Q_b$, $W_{1, b}$, $W_{2, b}$ and $\calH_b$. 
    We will also usually omit the dependence of $d$ for every constant. 
    \item For vector-valued functions: the eigenfunctions of the matrix operator $\calH_0$ or $\calH_b$ are vector-valued, but we will keep using scalar Sobolev space if the two components are treated in that same space. For example, $Y_0 \in L^2$ indicates $Y_0 \in (L^2)^2$.
\end{itemize}

\mbox{}

\textbf{Acknowledgments.} The author is supported by the ERC advanced grant SWAT and ERC starting grant project FloWAS (Grant agreement No.101117820). He sincerely thanks his supervisor Pierre Rapha\"el for his constant encouragement and support. The author is also grateful to Charles Collot, Yvan Martel, and Cl\'ement Mouhot for valuable suggestions.

\section{Preliminary spectral properties}
\label{sec2}

\subsection{Linearization around ground states}

\mbox{}

\underline{Generalized kernel of $\calH_0$ and spectral properties.}
 
Let $Q$ be the mass-critical ground state \eqref{eqgroundstatemasscritical}. Define
\be Q_1 = \Lambda_0 Q, \quad Q_2 = \Lambda_0 Q_1, \quad L_+^{-1} (|x|^2 Q) = \rho. 
\label{eqdefQ1Q2rho} \ee
Here $L_+^{-1}$ is bounded on $L^2_{rad}$ \cite[Lemma 2.1]{chang2008spectra}. 
We also compute using $L_+ Q_1 = - 2 Q$ (see \cite[Section 2.1]{chang2008spectra}) that
\be
 \left( \rho, Q\right)_{L^2(\RR^d)} = \left(|x|^2 Q, -\frac 12\Lambda_0 Q\right)_{L^2(\RR^d)} = \frac{1}{2}\| xQ\|_{L^2(\RR^d)}^2,
 \label{eqrhobQ}
\ee

Now define the eigenmodes 
\be
\left| \begin{array}{l}
\xi_0 = i \left(\begin{array}{c} Q \\ -Q \end{array}\right), \quad
\xi_1 = \frac 12 \left(\begin{array}{c} Q_1 \\ Q_1 \end{array}\right),\\
\xi_2 = -\frac{i}{8} \left(\begin{array}{c} |x|^2 Q \\ -|x|^2 Q \end{array}\right), \quad
\xi_3 = \frac 18 \left(\begin{array}{c} \rho \\ \rho \end{array}\right). \\
\zeta_{0, j} = \left(\begin{array}{c} \pa_j Q \\ \pa_j Q \end{array}\right) \quad \zeta_{1, j} = -\frac i2 \left(\begin{array}{c} x_j Q \\ - x_j Q \end{array}\right),\quad 1 \le j \le d.
\end{array}\right. \label{eqdefeigenmodeH0}
\ee
Then we recall from \cite{MR0783974} or \cite[Section 2.1]{chang2008spectra} of the algebraic relations
\bee \calH_0 \xi_0 = 0, && \calH_0 \xi_{k} = -i\xi_k,\quad k = 1, 2, 3; \\ 
 \calH_0 \zeta_{0, j} = 0, && \calH_0 \zeta_1 = -i \zeta_0,\quad 1 \le j \le d.
\eee
and that the generalized nullspace of $\calH_0$ is generated by these vectors
\be N_g (\calH_0) = {\rm span} \{ \xi_0, \xi_1, \xi_2, \xi_3, \zeta_{0, j}, \zeta_{1, j}  \}_{1 \le j \le d}. \label{eqnullspaceHL2}\ee

The adjoint operator of $\calH_0$ in $L^2$ can be shown to be 
\be \calH_0^* = \sigma_3 \calH_0 \sigma_3, \ee
and therefore
\be N_g (\calH_0^*) = \sigma_3  N_g (\calH_0). \label{eqnullspaceH*L2}
\ee

\mbox{}

Next, we define the scalar Schr\"odinger operators
\be L_1 =  [L_+, \Lambda_0] =-\Delta+\frac{2}{d}\left(\frac{4}{d}+1\right) x \cdot \nabla Q Q^{\frac 4d - 1}. \quad L_2 =  [L_-, \Lambda_0] = -\Delta+\frac{2}{d} x \cdot \nabla Q Q^{\frac 4d - 1}. \label{eqdefL1L2}
\ee
Then $L_1, L_2$ and $L_\pm$ enjoy the following properties.
\begin{proposition}[Coercivity of energy and Virial commutator] \label{propspecprop}Let $u, v \in H^1(\RR^d)$.
\begin{enumerate}
    \item  For $d \ge 1$, suppose
    \be
      (u, Q)_{L^2} =  (u, xQ)_{L^2} =  (u, |x|^2 Q)_{L^2} =  (w, Q_1)_{L^2} =  (w, \rho)_{L^2} = 0,
      \label{eqortho1}
    \ee
    or 
    \be
      (u, Q)_{L^2} =  (u, xQ)_{L^2} =  (u, Q_1)_{L^2} =  (w, Q_1)_{L^2} = (w, Q_2)_{L^2} =  (w, \nabla Q)_{L^2} = 0, \label{eqortho2}
    \ee
     we have
    \be (L_+ u, u) \sim \| u \|_{H^1}^2,\quad (L_- w, w) \sim \| w \|_{H^1}^2. \label{eqcoerLpm} \ee 
    \item  Let $1 \le d \le 10$. Suppose $u, w$ satisfies \eqref{eqortho2}, we have
     \be (L_1 u, u) \sim \| u \|_{\dot H^1(\RR^d)}^2 + \| \la r \ra^{-\mu_d} u   \|_{L^2(\RR^d)}^2 ,\quad (L_2 w, w) \sim \| w \|_{\dot H^1(\RR^d)}^2 + \| \la r \ra^{-\mu_d} w   \|_{L^2(\RR^d)}^2. \label{eqcoerL12} \ee
     where $\mu_d = 1$ for $ d = 1$ or $d \ge 3$, and $\mu_2 = \frac{11}{10}$. Moreover, if $d=2$ but $u \in \dot H^1$ satisfies $\int_{|x| = r} u dz = 0$ for almost every $r > 0$, then $\mu_2$ can be taken as $1$.
\end{enumerate}
Moreover, there exists $\delta_d > 0$ such that \eqref{eqortho2} can be replaced by
\be
\begin{split}
&\max\left\{ |(u, Q)_{L^2}|, |(u, Q_1)_{L^2}|, |(u, xQ)_{L^2}|, |(w, Q_1)_{L^2}|, |(w, Q_2)_{L^2}|, |(w, \nabla Q)_{L^2}|\right\} \\
 \le& \delta_d \left(  \| \la r \ra^{-\mu_d} u   \|_{L^2(\RR^d)} +  \| \la r \ra^{-\mu_d} w   \|_{L^2(\RR^d)} \right)
 \end{split}
\label{eqalmostortho2}
\ee
with the above conclusions \eqref{eqcoerLpm} and \eqref{eqcoerL12} still true. 
\end{proposition}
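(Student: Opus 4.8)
\textbf{Proof plan for Proposition \ref{propspecprop}.}

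My overall strategy is to treat parts (1) and (2) as standard coercivity statements for Schr\"odinger operators with a finite-dimensional degeneracy, reducing everything to (a) spectral information about $L_\pm$ already available in the literature, (b) the numerically-verified ``spectral property'' for the Virial commutators $L_1, L_2$, and (c) an abstract perturbation lemma that converts exact orthogonality into almost-orthogonality. For part (1), I would recall from \cite{MR0783974,kwong1989uniqueness,chang2008spectra} that $L_-\ge 0$ with kernel $\mathrm{span}\{Q\}$ and a spectral gap above $0$, while $L_+$ has a single negative eigenvalue (eigenfunction comparable to $Q$ up to lower-order tails), kernel $\mathrm{span}\{\pa_j Q\}$, and gap above. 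The quadratic forms $(L_\pm u,u)$ are therefore coercive modulo the respective kernels and the negative direction of $L_+$. The content is then purely linear algebra: one must check that each orthogonality set in \eqref{eqortho1} or \eqref{eqortho2} pins down enough directions to kill the negative and zero modes. For $L_+$ acting on $u$: radial orthogonality to $Q$ handles the negative eigenvalue (its eigenfunction is radial, not orthogonal to $Q$ since $Q>0$), orthogonality to $|x|^2 Q$ or to $Q_1=\Lambda_0 Q$ removes the remaining radial zero-ish direction after projecting (here the identity $L_+ Q_1 = -2Q$ and \eqref{eqrhobQ} enter), and orthogonality to $xQ$ kills the translation kernel $\pa_j Q$. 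For $L_-$ acting on $w$: $L_- Q=0$ forces us to use orthogonality to $Q_1$ (note $(Q_1,Q)=0$ is false in general, so the pairing with $Q_1$ genuinely controls the $Q$-component) together with a second relation from \eqref{eqortho1} or \eqref{eqortho2}; since $L_-\ge 0$ with discrete gap, one direction already suffices but the redundancy gives the clean $H^1$ estimate. I would invoke the standard ``three lemmas'' from e.g. \cite[Section 2]{MR2150386} or \cite{MR2729284} rather than redo the estimates.

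For part (2), the key external input is the coercivity of the Virial quadratic forms associated to $L_1=[L_+,\Lambda_0]$ and $L_2=[L_-,\Lambda_0]$ under the orthogonality conditions \eqref{eqortho2}; this is precisely the \emph{spectral property} of \cite{MR2150386,MR2252148,MR3841347}, verified with computer assistance for $1\le d\le 10$. I would cite these references for the bound
\[
(L_1 u,u)\ \gtrsim\ \|u\|_{\dot H^1}^2 + \|\la r\ra^{-\mu_d} u\|_{L^2}^2,
\]
and the analogous statement for $L_2$, after checking that the orthogonality conditions in those papers match (or are implied by) \eqref{eqortho2} --- some bookkeeping is needed because different sources normalize the modulation directions differently, and the weight exponent $\mu_d$ must be tracked: $\mu_d=1$ in all cases except $d=2$, where the failure of the radial Hardy inequality forces $\mu_2=\frac{11}{10}$ unless the extra zero-average condition $\int_{|x|=r}u\,dz=0$ is imposed, in which case one recovers $\mu_2=1$ via Hardy on the non-radial sector (the first spherical harmonic and above satisfy a Hardy inequality even in $d=2$). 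The upper bound $(L_1 u,u)\lesssim\|u\|_{\dot H^1}^2+\|\la r\ra^{-\mu_d}u\|_{L^2}^2$ is immediate from the explicit form of $L_1$ in \eqref{eqdefL1L2} and the exponential decay of $Q$, so ``$\sim$'' follows.

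For the final ``almost-orthogonality'' upgrade: given \eqref{eqalmostortho2}, write $u = u^\perp + \sum_i c_i e_i$ where $\{e_i\}$ spans the obstruction directions ($Q, Q_1, xQ$ for $u$; $Q_1, Q_2, \nabla Q$ for $w$) and $u^\perp$ satisfies the exact conditions \eqref{eqortho2}. From \eqref{eqalmostortho2} and boundedness of the projection one gets $|c_i|\lesssim \delta_d(\|\la r\ra^{-\mu_d}u\|_{L^2}+\|\la r\ra^{-\mu_d}w\|_{L^2})$, hence the correction terms contribute at most $O(\delta_d^2)$ times the weighted $L^2$ norms squared to the quadratic form, which is absorbed into the right-hand side for $\delta_d$ small. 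Combining with the coercivity of $u^\perp$ (and noting $\|\la r\ra^{-\mu_d}u^\perp\|_{L^2}\ge \frac12\|\la r\ra^{-\mu_d}u\|_{L^2}-O(\delta_d)(\cdots)$) closes the estimate. I expect the main obstacle to be purely organizational rather than conceptual: carefully matching the orthogonality conventions of the three computer-assisted references to the conditions \eqref{eqortho2} used here, and handling the $d=2$ radial anomaly so that the weight $\mu_2$ is stated correctly in both the general and the zero-average cases. The genuinely hard analytic fact --- positivity of the Virial quadratic form --- is borrowed wholesale from \cite{MR2150386,MR2252148,MR3841347} and not reproved.
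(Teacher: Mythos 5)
Your overall plan — cite the classical spectral facts for $L_\pm$, invoke the computer-assisted Virial coercivity, bridge orthogonality sets by modulation, and perturb to the almost-orthogonal case — matches the paper's strategy. However, there are two concrete problems.

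First, the parenthetical ``note $(Q_1,Q)_{L^2}=0$ is false in general'' is wrong: $Q_1=\Lambda_0 Q$ with $\Lambda_0^*=-\Lambda_0$, so $(Q_1,Q)_{L^2}=-(Q,Q_1)_{L^2}$ and hence $(Q_1,Q)_{L^2}=0$ always. This matters because you lean on that false claim to argue that $(w,Q_1)_{L^2}=0$ ``genuinely controls the $Q$-component'' for the $L_-$ coercivity. It does not. In fact $(w,Q_1)_{L^2}=0$ is orthogonal to the issue: since $L_-Q=0$, the quadratic form $(L_- w,w)$ is \emph{invariant} under $w\mapsto w-\beta Q$, and the role of the extra conditions ($(w,\rho)_{L^2}=0$ in \eqref{eqortho1}, or $(w,Q_2)_{L^2}=0$ in \eqref{eqortho2}) is to control the modulation parameter $\beta$ so that $\|w\|_{H^1}\sim\|w-\beta Q\|_{H^1}$ — precisely because $(Q,\rho)_{L^2}=\tfrac12\|xQ\|_{L^2}^2\neq0$ and $(Q,Q_2)_{L^2}=-\|Q_1\|_{L^2}^2\neq0$. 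The paper implements this by setting $\wt=w-\beta Q$ to reach the conditions of \cite[Lemma 2.2]{chang2008spectra} and then showing $\beta$ is bounded by $\|\wt\|_{L^2}$. You cannot close part (1) by ``linear algebra with the kernel'' until you articulate this modulation step correctly. The companion step for $L_+$ uses $L_+Q_1=-2Q$ and $(u,Q)_{L^2}=0$ so that $(L_+u,u)=(L_+\ut,\ut)$ with $\ut=u-\alpha Q_1$, again with $\alpha$ controlled through $(u,Q_1)_{L^2}=0$.

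Second, in part (2) you write that you ``would cite these references for the bound'' with the weight $\la r\ra^{-\mu_d}$, but \cite{MR2150386,MR2252148,MR3841347} only furnish a lower bound on the $L^2$ part with an \emph{exponential} weight. Passing to the polynomial weight $\la r\ra^{-\mu_d}$ requires a Hardy-type inequality in every dimension, not just $d=2$: the standard sharp Hardy inequality for $d\ge3$; a one-sided variant on the half-line for $d=1$; the non-sharp exterior Hardy inequality $\|r^{-\alpha}u\|_{L^2(\RR^2\setminus B_R)}\lesssim\|\nabla u\|_{L^2}$ for $\alpha>1$ when $d=2$ (which is what forces $\mu_2=\tfrac{11}{10}$); and the spherical-harmonic Hardy inequality on the non-radial sector to recover $\mu_2=1$ under the zero-average hypothesis. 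Your sketch only flags the $d=2$ case. The almost-orthogonality upgrade in your last paragraph is correct and matches the paper.
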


This is a slight reformulation of results in  \cite{MR0783974} (or \cite{chang2008spectra}) and \cite{MR2150386,MR2252148,MR3841347} respectively for (1) and (2), and the proof is given in Appendix \ref{appA1}. We emphasize that (2) is proven with numerical help. 

Finally, we record the standard decay estimates for functions related to $Q$. 
\begin{lemma}For $d \ge 1$ and $x \in \RR^d$, 
 \bea
 Q(x) \sim \la x \ra^{-\frac{d-1}{2}} e^{-|x|};\quad  \left| \nabla^n Q(x) \right| \lesssim_n \la x \ra^{-\frac{d-1}{2}} e^{-|x|},\quad \forall\,\,n\ge 1. \label{eqsolitondecay} \\
 |\nabla^n f(x)|   \lesssim_n e^{-\frac 45 |x|},\qquad  \forall\,\, n \ge 0, \,\, f \in \{ Q_1, |x|^2 Q, Q_2, \rho \}.\label{eqdecayeigen} \eea
\end{lemma}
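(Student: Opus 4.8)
The statement collects standard decay estimates for $Q$ and for the four auxiliary functions $Q_1, |x|^2 Q, Q_2, \rho$ appearing in the generalized nullspace of $\calH_0$. The plan is to treat \eqref{eqsolitondecay} first by classical ODE/elliptic arguments, and then deduce \eqref{eqdecayeigen} by a bootstrap using the explicit relations defining $Q_1, Q_2, \rho$ together with the sharp decay already obtained for $Q$.

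For \eqref{eqsolitondecay}: since $Q$ is the positive radial $H^1$ solution of $\Delta Q - Q + Q^{p_0} = 0$, its decay is classical (Gidas–Ni–Nirenberg / Berestycki–Lions). First I would recall that $Q$ is positive, radial and decreasing, hence $Q(x) \to 0$ as $|x| \to \infty$, so that the nonlinear term $Q^{p_0}$ is negligible compared to $Q$ for $|x|$ large: more precisely $\Delta Q - Q = -Q^{p_0} = -o(Q)$. Comparing with the modified Bessel equation, one gets $Q(x) \sim c\, |x|^{-\frac{d-1}{2}} e^{-|x|}$ with $c > 0$; the matching lower bound follows from a barrier/maximum-principle argument (comparing $Q$ from below with $c' \la x\ra^{-\frac{d-1}{2}} e^{-|x|}$ on an exterior domain). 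For the derivative bounds $|\nabla^n Q| \lesssim_n \la x\ra^{-\frac{d-1}{2}} e^{-|x|}$, I would differentiate the equation, use elliptic regularity on unit balls $B(x,1)$ for $|x|$ large (interior Schauder estimates) to bound $\|\nabla^n Q\|_{L^\infty(B(x,1/2))}$ by $\|Q\|_{L^\infty(B(x,1))}$ plus lower-order terms, and feed in the $L^\infty$ decay of $Q$ just obtained; the power-weight $\la x\ra^{-\frac{d-1}{2}}$ is stable under this because it varies slowly.

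For \eqref{eqdecayeigen}: here the target exponential rate is only $e^{-\frac45|x|}$, which is comfortably weaker than $e^{-|x|}$, so no sharpness is needed and the polynomial prefactors can be absorbed. Write $\phi := \frac45|x|$. Each of the four functions is built from $Q$ and $|x|^2 Q$:
\begin{itemize}
\item $Q_1 = \Lambda_0 Q = \frac d2 Q + x\cdot\nabla Q$, so $|\nabla^n Q_1| \lesssim_n \sum_{m \le n+1} \la x\ra^{m} |\nabla^m Q| \lesssim_n \la x\ra^{n+1 - \frac{d-1}{2}} e^{-|x|} \lesssim_n e^{-\frac45|x|}$ by \eqref{eqsolitondecay} and absorbing the polynomial factor into the gap between $e^{-|x|}$ and $e^{-\frac45|x|}$.
\item $Q_2 = \Lambda_0 Q_1$: same computation applied once more, one more polynomial power, still absorbed.
\item $|x|^2 Q$: trivially $|\nabla^n (|x|^2 Q)| \lesssim_n \la x\ra^2 \sum_{m\le n}|\nabla^m Q| \lesssim_n \la x\ra^{2-\frac{d-1}{2}} e^{-|x|} \lesssim_n e^{-\frac45|x|}$.
\item $\rho = L_+^{-1}(|x|^2 Q)$: here I would use the equation $L_+ \rho = |x|^2 Q$, i.e. $-\Delta\rho + \rho = W_1 \rho + W_2\rho + |x|^2 Q = (W_1+W_2)\rho + |x|^2 Q$ where $W_1 + W_2 = p_0 Q^{p_0-1}$ decays exponentially. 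Since $\rho \in L^2_{rad}$ (boundedness of $L_+^{-1}$), and the right-hand side $g := p_0 Q^{p_0-1}\rho + |x|^2 Q$ is in $L^2$ with exponential decay of the $|x|^2 Q$ part, an Agmon-type estimate (multiply $-\Delta \rho + \rho = g$ by $e^{2\phi}\rho$ and integrate, using $|\nabla\phi| = \frac45 < 1$ to absorb the gradient term) yields $\int e^{2\phi}(|\nabla\rho|^2 + |\rho|^2) \lesssim \int e^{2\phi}|g|^2 < \infty$, hence $\rho \in L^2$-weighted by $e^{2\phi}$; then elliptic regularity on unit balls upgrades this to pointwise $|\nabla^n\rho| \lesssim_n e^{-\frac45|x|}$, bootstrapping in $n$ as before (differentiating the equation and noting $\nabla^n g$ still decays exponentially since $\nabla^n Q$ and $\nabla^n \rho$ of lower order do).
\end{itemize}

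The main obstacle — such as it is — is the estimate for $\rho$, since unlike the other three functions it is defined implicitly through $L_+^{-1}$ rather than by an explicit algebraic formula in $Q$. The Agmon/weighted-energy argument with weight $e^{\frac45|x|}$ (chosen with rate strictly below $1$ so that $|\nabla\phi|^2 - 1 < 0$ gives a sign that absorbs the cross term $\nabla\rho\cdot\nabla\phi$) is the natural tool; the exponential decay of the potential $p_0 Q^{p_0-1}$ ensures the term $\int e^{2\phi}Q^{p_0-1}\rho^2$ can be controlled by $\|\rho\|_{L^2}^2$ for $|x|$ large and by the weighted norm on a compact region, closing the estimate. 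Once $\rho$ (and its derivatives) are controlled, everything else is bookkeeping, and the proof — being routine — is naturally deferred to an appendix or simply cited from the standard references \cite{kwong1989uniqueness,chang2008spectra}.
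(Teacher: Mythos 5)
Your proposal is correct, and for \eqref{eqsolitondecay} and for $Q_1$, $|x|^2Q$, $Q_2$ it coincides with the paper's (the paper simply cites the classical asymptotics of $Q$ and notes the other three follow by absorbing polynomial weights into the relaxed rate $e^{-\frac45|x|}$). The only genuinely different step is your treatment of $\rho$. The paper first invokes a quantitative weighted-boundedness property of $L_+^{-1}$ (cited from the companion paper) to get the \emph{pointwise} bounds $|\pa_r^n\rho|\lesssim \la x\ra^{3-\frac{d-1}{2}}e^{-|x|}$ for $n=0,1$, and then obtains all higher derivatives by induction on the radial ODE $\pa_r^{n+2}\rho=\pa_r^n\big((1-\tfrac{d-1}{r}-W_1-W_2)\rho+r^2Q\big)$, together with interior elliptic regularity near the origin. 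You instead run a self-contained Agmon estimate with weight $e^{\frac45|x|}$ to get weighted $H^1$ control of $\rho$ and then upgrade to pointwise derivative bounds by interior elliptic estimates on unit balls. Both close; your route only uses the plain $L^2$-boundedness of $L_+^{-1}$ and is more elementary, at the cost of a non-sharp intermediate bound ($e^{-\frac45|x|}$ rather than the paper's $\la x\ra^{3-\frac{d-1}{2}}e^{-|x|}$), which is all the lemma requires. Two small points to tidy up: (i) in the Agmon step you should work with a truncated weight (e.g.\ $\phi_\varepsilon=\phi/(1+\varepsilon\phi)$) so that the quantity $\int e^{2\phi_\varepsilon}|\rho|^2$ being absorbed is finite a priori, then let $\varepsilon\to0$; (ii) you have the two absorption regimes swapped in the sentence about $\int e^{2\phi}Q^{2(p_0-1)}|\rho|^2$ — it is for $|x|$ \emph{large} that the smallness of $Q^{p_0-1}$ lets you absorb into the weighted norm $\int e^{2\phi}|\rho|^2$ (note $e^{2\phi}Q^{2(p_0-1)}$ need not decay when $d\ge3$), and on the compact region that the term is controlled by $\|\rho\|_{L^2}^2$. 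Neither point affects the validity of the argument.
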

\begin{proof}
    The asymptotics of $Q$ \eqref{eqsolitondecay} can be found in, for example, \cite[Lemma 2.1]{Limodestability1}. Then the estimates for $Q_1, |x|^2 Q$ and $Q_2$ in \eqref{eqdecayeigen} easily follow from \eqref{eqsolitondecay}. 
    
    For $\rho$, we can first show $|\pa_r^n\rho| \lesssim \la x \ra^{3-\frac{d-1}{2}}e^{-|x|}$ for $n = 0, 1$ using the boundedness of $L_+^{-1}$ (see for example \cite[Lemma 3.1 (3.7)]{Limodestability1}). Then we obtain $n\ge 2$ decay estimates using the equation $L_+ \rho = |x|^2 Q$: firstly $\rho \in C^\infty(\RR^d)$ from elliptic regularity (which implies \eqref{eqdecayeigen} for $|x| \le 1$), and the decay for $|x| \ge 1$ can be derived inductively from
    $\pa_r^{n+2} \rho = \pa_r^n \left( ( 1- \frac{d-1}{r} - W_1 - W_2 ) \rho + r^2 Q\right)$ for $n \ge 0$ through differentiating the equation. 
\end{proof}

\subsection{Linearization around self-similar profile} 

\mbox{}

\underline{Existence and Asymptotics of $Q_b$.}

The existence of admissible $Q_b$ solving \eqref{eqselfsimilar} and its asymptotics near infinity were first obtained in \cite{MR4250747}. Here we recall the more refined asymptotics from \cite[Proposition 2.4, Proposition 4.13]{Limodestability1}.

\begin{proposition}[Existence and asymptotics of $Q_b$, {\cite{MR4250747,Limodestability1}}]\label{propQbasymp}
For any $d \ge 1$ and $0 < s_c \le s_c(d) \ll 1$, there exists $b = b(s_c, d) > 0$ with
\be s_c \sim b^{-1} e^{-\frac{\pi}{b}}, \label{eqbasymp}\ee
and $Q_b \in C^\infty(\RR^d) \cap \dot H^1(\RR^d)$ solving \eqref{eqselfsimilar} such that for $k \ge 0$, 
\begin{align}
 |\pa_r^k Q_b(r)| &\lesssim_k \left| \begin{array}{ll} (br)^{-k} r^{-\frac {d}2+s_c} b^{-\frac 12} 
e^{-\frac{\pi}{2b}} ,& r \ge 4b^{-1}; \\
r^{-\frac{d-1}{2}} b^{-\frac 16}e^{-\frac{\pi}{2b} + S_b(r)} \left\la b^{-\frac 23} (2-br)\right\ra^{-\frac 14}, & r \in [b^{-\frac 12}, 4b^{-1}]; \\
\la r \ra^{-\frac{d-1}{2}} e^{-r}, & r\in[0, b^{-\frac 12}];
\end{array}\right.\label{eqQbasymp1}\\
 |Q_b(r)| &\sim \left| \begin{array}{ll}  r^{-\frac {d}2+s_c} b^{-\frac 12} 
e^{-\frac{\pi}{2b}} ,& r \ge 4b^{-1}; \\
r^{-\frac{d-1}{2}} b^{-\frac 16}e^{-\frac{\pi}{2b} + S_b(r)} \left\la b^{-\frac 23} (2-br)\right\ra^{-\frac 14}, & r \in [b^{-\frac 12}, 4b^{-1}]; \\
\la r \ra^{-\frac{d-1}{2}} e^{-r}, & r\in[0, b^{-\frac 12}];
\end{array}\right.\label{eqQbasymp2}\\
  |\pa_r^k (\Re P_b-  Q)(r)| &\lesssim_k b^\frac 13 \la r \ra^{-\frac{d-1}{2}} e^{-r}, \quad r \le b^{-\frac 13}; \label{eqQbasympint1}\\
  |\pa_r^k \Im P_b(r)| &\lesssim_k  b s_c \la r \ra^{-\frac{d-1}{2}} e^{r}, \quad r \le b^{-\frac 13}. \label{eqQbasympint2}
\end{align}
  where $Q$ is the mass-critical ground state, $S_b(r) = \int_{\min\{ r, \frac 2b \}}^{\frac 2b} \left( 1 - \frac{b^2 s^2}{4} \right)^\frac 12$.
\end{proposition}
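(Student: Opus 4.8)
\textit{Proof proposal.} The plan is to look for a radial $Q_b$, reduce \eqref{eqselfsimilar} to the ODE
\[
Q_b'' + \frac{d-1}{r}Q_b' - Q_b + ib\Big(\tfrac{2}{p-1}Q_b + rQ_b'\Big) + Q_b|Q_b|^{p-1} = 0,\qquad Q_b'(0)=0,
\]
and treat existence as a shooting/matched-asymptotics problem in which $b$ (together with $Q_b(0)>0$, after fixing the phase) is the parameter adjusted so that the solution launched from the origin lands on the finite-energy branch at $r=\infty$. I would build the solution on three overlapping zones: an \emph{interior zone} $r\lesssim b^{-1/3}$, where $ib(\Lambda_0-s_c)$ is a perturbation of the mass-critical ground-state equation and $Q_b$ stays close to $Q$; a \emph{WKB zone} $b^{-1/3}\lesssim r\lesssim 4b^{-1}$, where the nonlinear term is already exponentially small and, after the gauge change $Q_b=r^{-(d-1)/2}e^{-ibr^2/4}v$, the linear equation becomes $v''=\big(1-\tfrac{b^2r^2}{4}+O(r^{-2})\big)v$ with a classical turning point at $r=2/b$; and a \emph{far zone} $r\ge 4b^{-1}$, where the equation is essentially linear with one admissible branch $\sim r^{-d/2+s_c}e^{i\log r/b}$ in $\dot H^1$ and a Gaussian-type branch whose derivative is not square-integrable. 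Compatibility of the three matchings is exactly what forces the transcendental quantization \eqref{eqbasymp}.

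In the interior I would solve by a fixed point around $Q$: writing $Q_b=Q+\zeta_{re}+i\zeta_{im}$, the real and imaginary parts satisfy, schematically, $L_+\zeta_{re}=b\Lambda_0\zeta_{im}+\mathrm{NL}$ and $L_-\zeta_{im}=-b(\Lambda_0-s_c)(Q+\zeta_{re})+\mathrm{NL}$; since $L_+$ is invertible on suitable weighted spaces modulo the kernel element $\Lambda_0 Q$ (absorbed into the modulation of the shooting data) this yields $\Re P_b-Q=O(b^{1/3})$ in the weighted sense of \eqref{eqQbasympint1}, while the exponentially growing/decaying fundamental pair of $L_-$ forces $\zeta_{im}$ to carry an $e^{r}$-growing component of amplitude precisely of order $bs_c$, which is \eqref{eqQbasympint2}. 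In the WKB zone, on $b^{-1/3}\lesssim r\lesssim 2/b-Cb^{-1/3}$ the decaying WKB solution is $\sim r^{-(d-1)/2}b^{-1/6}e^{-\pi/(2b)+S_b(r)}$ with $S_b$ as defined (the half-period computation $\int_0^{2/b}(1-b^2s^2/4)^{1/2}ds=\pi/(2b)$ produces the $e^{-\pi/(2b)}$ tunneling factor), near $r=2/b$ the Airy connection formula produces the $\langle b^{-2/3}(2-br)\rangle^{-1/4}$ envelope, and on $2/b+Cb^{-1/3}\lesssim r\lesssim 4b^{-1}$ WKB gives the oscillatory continuation; the far zone then selects the admissible branch with amplitude $\sim b^{-1/2}e^{-\pi/(2b)}$. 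Tracing this admissible branch backwards through the Airy/WKB zone and equating the size it imposes on $\zeta_{im}$ in the interior with the $bs_c$ produced by the interior construction gives $bs_c\sim e^{-\pi/b}$, i.e. \eqref{eqbasymp}; an implicit-function-type argument in $(b,Q_b(0))$ around the limit $(0,Q(0))$ then upgrades this heuristic matching to an actual solution, and the derivative bounds in \eqref{eqQbasymp1} follow by differentiating the ODE in each zone and bootstrapping.

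The main difficulty is that the quantization \eqref{eqbasymp} is an \emph{exponentially small}-in-$b$ relation: it cannot be extracted from any formal power series in $b$, so a naive implicit-function theorem is of no help, and capturing it rigorously forces precise Airy connection formulae with error bounds uniform as $b\to 0$, carried through a region where $Q_b$ is no longer close to $Q$ and the (tiny but nonzero) nonlinearity must still be tracked. One must also verify that the map from $(b,Q_b(0))$ to the coefficient of the Gaussian branch at infinity is a local diffeomorphism near the origin, which is where all the uniform WKB/Airy bookkeeping is spent. In the present paper this is not reproved: it is quoted from \cite{MR4250747} in its basic form and from \cite{Limodestability1} in the refined form stated here.
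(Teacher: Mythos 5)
The paper does not prove Proposition~\ref{propQbasymp}: it is imported verbatim from \cite{MR4250747} (existence, quantization~\eqref{eqbasymp}, and the exterior/WKB bounds~\eqref{eqQbasymp1}--\eqref{eqQbasymp2}) and from \cite{Limodestability1} (the refined interior bounds~\eqref{eqQbasympint1}--\eqref{eqQbasympint2}), and the present text simply states and cites it. Your sketch --- three overlapping zones (interior perturbation of $Q$, semiclassical turning-point region near $r=2/b$ with Airy connection, far oscillatory zone selecting the $r^{-d/2+s_c}$ branch) with the quantization $s_c\sim b^{-1}e^{-\pi/b}$ read off the tunneling factor $e^{-2\int_0^{2/b}\sqrt{1-b^2s^2/4}\,ds/(2b)}$ --- correctly reflects the matched-asymptotics strategy of \cite{MR4250747}, and you rightly flag both that the relation is beyond all orders in $b$ and that the paper does not reprove it.

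One substantive slip worth correcting: you write that $L_+$ is ``invertible modulo the kernel element $\Lambda_0 Q$.'' This is false --- $\Lambda_0 Q$ is not in $\ker L_+$; rather $L_+\Lambda_0 Q=-2Q$, and in the radial class $L_+$ has trivial kernel and a bounded inverse on $L^2_{\mathrm{rad}}$, a fact the paper itself uses in~\eqref{eqdefQ1Q2rho} to define $\rho=L_+^{-1}(|x|^2 Q)$. The only genuine kernel obstruction in the interior fixed point is $\ker L_-=\mathrm{span}\{Q\}$, and it is the solvability condition for the $L_-\zeta_{im}$ equation (leading term $-b s_c\|Q\|_{L^2}^2$, since $(\Lambda_0 Q,Q)_{L^2}=0$) that must be balanced against the exponentially small amplitude fed back from the exterior branch --- which is precisely where~\eqref{eqbasymp} and the $bs_c\,e^{r}$-size of $\Im P_b$ in~\eqref{eqQbasympint2} come from. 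I would also note that the construction in \cite{MR4250747} is closer to a Lyapunov--Schmidt reduction around an explicit bi-zonal ansatz than to a classical shooting method, although both viewpoints package the same matching computation.
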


From now on, we refer to $Q_b$ as the solution of \eqref{eqselfsimilar} from Proposition \ref{propQbasymp}, and $\calH_b$ be the corresponding linearized operator \eqref{eqdefcalH}.
As a corollary, we have the following pointwise difference estimate for the potentials.

\begin{corollary}\label{corodiffest} Under the condition of Proposition \ref{propQbasymp}, with $W_1, W_2$ from \eqref{eqdefW1W20} and $W_{1, b}, W_{2,b}$ from \eqref{eqdefW1bW2b}, we have for $n \ge 0$,
\be
  \left|\pa_r^n (W_{j,b} - W_j)\right| \lesssim_n \left| \begin{array}{ll}
       b^\frac 13 |Q_b|^{p-1}, &  r \le b^{-\frac 13}, \\
       \la br \ra^{-n} |Q_b|^{p-1}, & r \ge b^{-\frac 13},
  \end{array}\right. 
        \quad {\rm for}\,\,j = 1, 2. \label{eqWjbdiffest}
\ee   
\end{corollary}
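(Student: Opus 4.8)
This is a direct consequence of the asymptotics in Proposition \ref{propQbasymp}; the plan is to split at $r=b^{-1/3}$ and argue differently in the two regions. On the interior $\{r\le b^{-1/3}\}$ the relevant facts are that $Q_b$ does not vanish there (so that $|Q_b|=((\Re Q_b)^2+(\Im Q_b)^2)^{1/2}$ is smooth and positive) and that $Q_b$ is close to $Q$: combining \eqref{eqQbasympint1}, \eqref{eqQbasympint2} with the super-exponential smallness of $s_c$ from \eqref{eqbasymp} (which makes $bs_c\,e^{Cb^{-1/3}}\to 0$) one obtains $\big|\pa_r^k(|Q_b|-Q)\big|\lesssim b^{1/3}\la r\ra^{-(d-1)/2}e^{-r}$ and $\big|\pa_r^k(Q_b^2/|Q_b|^2-1)\big|\lesssim b^{1/3}$ on this region, for all $k\ge 0$. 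I would then write $W_{j,b}-W_j$ as an integral along the homotopy that linearly interpolates, inside the formula for $W_{j,b}$, the exponent $p\leftrightarrow p_0$, the amplitude $|Q_b|\leftrightarrow Q$, and (for $j=2$) the unit-modulus phase $Q_b^2/|Q_b|^2\leftrightarrow 1$; for $j=1$ this reads
\[
  W_{1,b}-W_1=\int_0^1\frac{d}{dt}\Big(\tfrac{p_t+1}{2}\,f_t^{\,p_t-1}\Big)\,dt,\qquad p_t=(1-t)p_0+tp,\quad f_t=(1-t)Q+t|Q_b|,
\]
the case $j=2$ being identical after also interpolating the phase factor $Q_b^2/|Q_b|^2$. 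Along the path $f_t\sim Q$, so the integrand is $C^\infty$ in $r$; differentiating it $n$ times in $r$, every resulting term carries one small factor — $|p-p_0|\lesssim s_c$, or a derivative of $|Q_b|-Q$, or a derivative of $Q_b^2/|Q_b|^2-1$ — against an amplitude $\lesssim Q^{p-1}\sim|Q_b|^{p-1}$ and, at worst, a logarithmic loss $|\ln Q|\lesssim b^{-1/3}$ from differentiating in $p$. Since $s_c\,b^{-1/3}\lesssim b^{1/3}$, this gives $\big|\pa_r^n(W_{j,b}-W_j)\big|\lesssim b^{1/3}|Q_b|^{p-1}$.

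On the exterior $\{r\ge b^{-1/3}\}$ I would bound $\big|\pa_r^n(W_{j,b}-W_j)\big|\le\big|\pa_r^nW_{j,b}\big|+\big|\pa_r^nW_j\big|$ and treat the pieces separately. For $W_{j,b}$, written as $\tfrac{p\pm1}{2}|Q_b|^{p-1}(Q_b/|Q_b|)^{2(j-1)}$, the Leibniz and chain rules reduce matters to derivatives of $Q_b$ and of $|Q_b|$, and \eqref{eqQbasymp1}, \eqref{eqQbasymp2} give a gain of $\la br\ra^{-1}$ per derivative in this region; using that $|Q_b|^{p-3}Q_b^2$ has modulus $|Q_b|^{p-1}$ this yields $\big|\pa_r^nW_{j,b}\big|\lesssim\la br\ra^{-n}|Q_b|^{p-1}$. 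The term $W_j$ is exponentially small: $\big|\pa_r^nW_j\big|\lesssim Q^{p_0-1}$ by \eqref{eqsolitondecay}, and comparing with the lower bound on $|Q_b|$ in \eqref{eqQbasymp2} — using $\int_0^r(1-b^2s^2/4)^{1/2}\,ds\le r$ and $|p-p_0|\lesssim s_c$, so that the prefactors $r^{O(s_c)}$ and $e^{O(s_c/b)}$ stay bounded — shows $Q^{p_0-1}\lesssim_n\la br\ra^{-n}|Q_b|^{p-1}$ uniformly for $r\ge b^{-1/3}$, in fact with an exponentially small (in $1/b$) constant once $r\gtrsim b^{-1}$. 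Summing the two bounds gives the claim.

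The one point requiring care is the passage through the three regimes of Proposition \ref{propQbasymp} in the exterior, specifically obtaining the sharp weight $\la br\ra^{-n}$ — rather than a lossy $(br)^{-n}$ — for the derivatives of $|Q_b|^{p-1}$ in the window $r\in[b^{-1/2},b^{-1}]$. There one should not apply $|\pa_r^kQ_b|\lesssim(br)^{-k}|Q_b|$ at face value but instead use that, away from the turning point $r=2b^{-1}$, the WKB amplitude and the phase of $Q_b$ both have $O(1)$ logarithmic derivatives, so $|\pa_r^kQ_b|\lesssim_k|Q_b|$ on that window; together with the genuine $\la br\ra^{-1}$ gain for $r\gtrsim b^{-1}$ this produces the stated estimate. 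Verifying the non-vanishing and the smooth parameter-dependence that legitimize differentiating under the integral in the interior is routine.
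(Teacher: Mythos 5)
Your proof is correct and follows essentially the same route as the paper: split at $r=b^{-1/3}$, use in the interior the non-vanishing asymptotics of $|Q_b|$ together with its $b^{1/3}$-closeness to $Q$ and the super-exponential smallness of $|p-p_0|\lesssim s_c$ (your homotopy packaging is a clean but computationally equivalent alternative to the paper's direct Fa\`a di Bruno/Leibniz expansion from the cited computations), and in the exterior bound $\partial_r^n W_{j,b}$ and $\partial_r^n W_j$ separately. The subtlety you flag in the window $r\in[b^{-1/2},b^{-1}]$ is already resolved by the structure of \eqref{eqQbasymp1}, which on that range gives $|\partial_r^k Q_b|\lesssim_k|Q_b|$ with no $(br)^{-k}$ factor attached, matching $\langle br\rangle^{-n}\sim 1$ there; the $(br)^{-k}$ gain is only stated, and only needed, on $r\ge 4b^{-1}$.
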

\begin{proof}
For $r \le b^{-\frac 13}$, we compute as in  \cite[(C.6)-(C.7)]{Limodestability1}, using the Fa\'a di Bruno's formula, Lebnitz rule and the non-vanishing asymptotics of $|Q_b|$ \eqref{eqQbasymp2} and $Q$ \eqref{eqsolitondecay}. The case for $r \ge b^{-\frac 13}$ follows from bounding $\pa_r^n W_{j, b}$, $\pa_r^n W_j$ separately, which is similar and simpler. 
\end{proof}

\mbox{}

\underline{Known spectrum of $\calH_b$.} 

Inspired by the eigenmodes of $\calH_0$ \eqref{eqdefeigenmodeH0}, we define  
\be \label{eqdefxi01b}
\left| \begin{array}{l}
\xi_{0,b} = i \left(\begin{array}{c} Q_b \\ -\bar{Q}_b \end{array}\right), \quad
\xi_{1,b} = \frac 12 \left(\begin{array}{c} \Lambda Q_b \\ \overline{\Lambda Q_b} \end{array}\right), \\
\xi_{2,b} = -\frac{i}{8} \left(\begin{array}{c} |x|^2 Q_b \\ -|x|^2 \bar{Q}_b \end{array}\right), \quad
\eta_b = \left(\begin{array}{c} Q_b \\ \bar{Q}_b \end{array}\right), \\
\zeta_{0, j, b} = \left(\begin{array}{c} \pa_j Q_b \\ \overline{\pa_j Q_b} \end{array}\right),\quad \zeta_{1,j,b} = - \frac i2 \left(\begin{array}{c} x_j  Q_b \\ -x_j \bar{Q}_b \end{array}\right),\quad 1 \le j \le d. 
\end{array}\right.
\ee
Then by differentiating the self-similar profile equation \eqref{eqselfsimilar}, we obtain the algebraic relations
\be
\begin{split}
    \calH_b \xi_{0, b} = 0,\quad \calH_b \xi_{1,b} = -2bi\xi_{1,b} - i\xi_{0,b},\quad \calH_b \xi_{2,b} = -i\xi_{1,b} + 2bi\xi_{2,b} - i\frac{s_c}{2}\eta_b, \\
\calH_b \zeta_{0,j,b} = -i \zeta_{0, j, b},\quad \calH_b \zeta_{1, j, b} = - i \zeta_{0, j, b} + i b \zeta_{1, j, b}.
\end{split} \label{eqQbalgrel}
\ee
thus 
\bea
\calH_b \xi_{0, b} = 0,\quad (\calH_b + 2bi) \left( \xi_{0,b} + 2b\xi_{1,b} \right) = 0,\label{eqeigencalHb1} \\
(\calH_b - 2bi)\left( \xi_{0,b} - 2b\xi_{1,b} + 8b^2 \xi_{2,b} \right) = -i 4b^2 s_c \eta_b,\label{eqeigencalHb2}\\  
(\calH_b + bi)\zeta_{0, j, b} = 0,\quad (\calH_b - bi) \left( \zeta_{0,j,b} - 2b\zeta_{1,j,b} \right) = 0.\label{eqeigencalHb3}
\eea

\mbox{}

\underline{A priori estimates for eigenfunctions}

We state the following lemma regarding a priori estimates for eigenfunctions. The proofs are using tools and arguments from the linear theory of \cite{li2023stability} and \cite{MR4250747}, which we record in Appendix \ref{appB1}.

\begin{lemma}\label{lemapriorieigen} For $d \ge 1$, when $0 < s_c \le s_c^* \ll 1$ and $\sigma \in (s_c, s_c + b)$, the following statements hold.
    Suppose $Y_0 \in \dot H^\sigma$ solves $(\calH_b - \l) Y_0 = 0$ with eigenvalue $\l$ satisfying $\Im \l < b(\sigma - s_c)$. We have 
    \begin{enumerate}
        \item (Regularity and smoothing estimate) $Y_0 \in \cap_{n \ge 1} \dot H^n$, and  
        \be \| Y_0 \|_{\dot H^2} \lesssim b^{-1-d}  \left( \| Y_0 \|_{\dot H^1_{\la x \ra^{-1}}} + \| Y_0 \|_{L^2_{\la x \ra^{-2}}}  \right).  \label{eqeigensmooth} \ee
        \item (Delocalization of radial eigenfunction) If $d =2$, then we further have
        \be  
           \| r^{-1} Y_0 \|_{L^2(\RR^2 - B_{4b^{-1}})} \lesssim b^{-\frac 12} \| r^{-\frac{11}{10}} Y_0 \|_{L^2(\RR^2)} \label{eqdelocalrad}
        \ee
    \end{enumerate}
\end{lemma}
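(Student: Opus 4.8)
\textbf{Proof proposal for Lemma \ref{lemapriorieigen}.}

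\emph{Part (1): regularity and smoothing estimate.} The plan is to bootstrap elliptic regularity from the eigenvalue equation, exploiting the ellipticity of the principal part of $\calH_b$. Writing $(\calH_b - \l) Y_0 = 0$ explicitly, the top-order operator is diagonal, $\mathrm{diag}(\Delta, -\Delta)$, perturbed by the first-order scaling term $ib\Lambda_0$ (with $\Lambda_0 = \tfrac d2 + x\cdot\nabla$) and the zeroth-order potentials $W_{1,b}, W_{2,b}$ together with the shift $-ibs_c - \l$. Since $|W_{j,b}| \lesssim |Q_b|^{p-1} \lesssim \la r\ra^{-2}$ uniformly in $b$ (from \eqref{eqQbasymp2}), and since the first-order coefficient grows only linearly, one solves for $\Delta Y_0$ in terms of $\nabla Y_0$, $Y_0$, and obtains $\| Y_0 \|_{\dot H^2_{\mathrm{loc}}}$ bounds; iterating, using that all coefficients are smooth (Proposition \ref{propQbasymp} gives $Q_b \in C^\infty$), gives $Y_0 \in \cap_{n\ge 1}\dot H^n_{\mathrm{loc}}$. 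To get the global bound \eqref{eqeigensmooth} with the explicit $b^{-1-d}$ weight, I would test the equation against a suitable weight and track the $b$-dependence carefully: the dangerous term is $ib\,x\cdot\nabla Y_0$ whose $L^2$ norm against a cutoff at scale $b^{-1}$ costs a power of $b^{-1}$ per derivative and a volume factor $b^{-d}$, so that $\| \la x\ra^{-1}\nabla Y_0\|_{L^2}$ controls $\|b\, x\cdot\nabla Y_0\|$ on the relevant region. Combining the diagonal elliptic estimate $\|\Delta Y_0\|_{L^2(\mathrm{dyadic})} \lesssim \|\l Y_0\| + \|bx\cdot\nabla Y_0\| + \|W Y_0\|$ with the weighted Hardy-type control of the lower-order terms yields \eqref{eqeigensmooth}; this is essentially the linear theory from \cite{li2023stability,MR4250747} and I expect it to be routine modulo careful bookkeeping of the $b$-powers.

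\emph{Part (2): delocalization of the radial eigenfunction for $d=2$.} Here the idea is ODE analysis in the far exterior region $r \ge 4b^{-1}$, where the potentials $W_{1,b}, W_{2,b}$ are negligible compared to everything else: by \eqref{eqQbasymp2}, $|Q_b(r)|^{p-1} \lesssim r^{-(d-2s_c)}\, b^{-(p-1)/2} e^{-\frac{(p-1)\pi}{2b}}$, which on $r\ge 4b^{-1}$ is exponentially small in $b$. In the radial class in $d=2$, each component of $Y_0 = (f, g)$ satisfies, to leading order, the ODE $f'' + \tfrac 1r f' + (\text{constants involving }\l, b) f = (\text{tiny potential and coupling terms})$. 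Since the eigenvalue lies below the essential spectrum, $\Im\l < b(\sigma - s_c)$, the relevant indicial/characteristic behavior forces decay (not growth) at infinity; one sets up a Gr\"onwall or energy argument on the weighted quantity $\int_{r\ge R}|Y_0|^2 r^{-2+2\e}\,dr$ and shows it is controlled by the interior contribution at $r\sim b^{-1}$. Concretely, multiplying the ODE by $\bar f r^{-1+2\e}$ (matching the $r^{-11/10}$ weight, i.e. $\e = 2/5$ gives $r^{-2+4/5} = r^{-6/5}$; one would tune the exponent to land on $r^{-1}$ versus $r^{-11/10}$) and integrating by parts, the good sign of the first-order-in-$b$ term and the smallness of the potential give the monotonicity needed to absorb the exterior weighted $L^2$ norm into the $r^{-11/10}$-weighted norm over all of $\RR^2$, with the loss $b^{-1/2}$ coming from the transition at $r = 4b^{-1}$.

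\emph{Main obstacle.} I expect the hard part to be Part (2): in $d=2$ the radial Hardy inequality is non-sharp (there is no $\int |f|^2/r^2 \lesssim \int |f'|^2$ for general radial $f$), which is precisely why the weight $\mu_2 = 11/10$ rather than $1$ is forced in Proposition \ref{propspecprop}(2) and why this exterior ODE estimate is needed in the first place. One must extract genuine decay of $Y_0$ from the eigenvalue equation in a regime where the potential is negligible, relying entirely on the ODE structure and the sign of the $b\,\Lambda_0$ term together with the spectral condition $\Im\l < b(\sigma-s_c)$; getting the constant $b^{-1/2}$ (rather than something worse) requires carefully choosing the matching radius and the exponent of the polynomial weight, and handling the off-diagonal coupling between the two components, which do not decouple exactly. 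Part (1), by contrast, is a fairly standard weighted elliptic bootstrap once one commits to tracking the powers of $b$.
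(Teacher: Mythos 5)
Your Part (1) diverges from the paper's route in a way that creates a real gap. The paper does not do a direct weighted elliptic bootstrap treating $ib\,x\cdot\nabla Y_0$ as a perturbation of $\Delta$. Instead it writes $Y_0 = -\calS_b^-(\l + ibs_c)\calV Y_0$ and $\nabla^n Y_0 = -\calS_b^-(\l + ib(s_c-n))\nabla^n(\calV Y_0)$, so the linearly-growing first-order term is absorbed into the free generator $\Delta_b = \Delta + ib\Lambda_0$, and the work is shifted to a smoothing estimate for the modified resolvent $R_b^\pm(z)$ (Lemma~\ref{lemsmoothres}), proved by analysing the explicit Fourier representation of that resolvent. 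This circumvents the problem you would face head-on: on $|x|\gg b^{-1}$ the coefficient $b|x|$ is unbounded and no decaying weight on $\nabla Y_0$ will dominate $\|b\,x\cdot\nabla Y_0\|_{L^2}$, so your ``diagonal elliptic estimate'' does not close globally. Moreover, you never invoke Lemma~\ref{lemapriorieigenvalue}; without the a priori bound $\la\Re\l\ra\lesssim b^{-d-1}$ the term $\l Y_0$ has no control, and your attribution of the $b^{-d}$ factor to a ``volume factor at scale $b^{-1}$'' is not the actual mechanism — in the paper it comes from $b^{-1}\la\Re\l\ra^{1/2}\lesssim b^{-1-d}$, with the $b^{-1}$ from \eqref{eqressmooth1} and the eigenvalue bound supplying $\la\Re\l\ra^{1/2}\lesssim b^{-d}$.

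Your Part (2) also rests on the wrong effective ODE. After the reduction $\phi_\pm = r^{(d-1)/2}e^{ibr^2/4}Y_0^{\pm}$ one obtains $\phi'' + (\tfrac{b^2 r^2}{4} - E)\phi = (\text{small})$, i.e.\ the $b\Lambda_0$ term produces a harmonic-oscillator potential $b^2 r^2/4$ dominating the far field; your proposed ODE $f'' + \tfrac{1}{r}f' + (\text{const})f = \cdots$ omits this entirely. Consequently a weighted-energy/Gr\"onwall argument based on ``the good sign of the first-order term'' cannot produce \eqref{eqdelocalrad}, because the bound genuinely requires the pointwise asymptotics $|Y_0(r)|\sim C_{Y_0}\,r^{-d/2+s_c+\Im\l/b}$ on $r\ge b^{-d-3}$. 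The paper gets this by a WKB construction of $V^\pm_{b,E}$ and a contraction argument via variation of constants; once the exact decay rate is known, both weighted norms are computed explicitly over $\{r\ge b^{-d-3}\}$, their ratio is $b^{-(d+3)/5}$ (i.e.\ $b^{-1}$ for $d=2$, hence $b^{-1/2}$ after taking square roots), and the transitional region $4b^{-1}\le r\le b^{-d-3}$ is handled trivially by $r^{-1}\le b^{-(d+3)/10}r^{-11/10}$. Your approach cannot access this exponent: $b^{-1/2}$ is tied to the cutoff radius $b^{-d-3}$ forced by the WKB validity, not to ``tuning the weight exponent,'' and a monotonicity argument alone would not detect that $Y_0$ decays only at the borderline rate $r^{-1+O(b)}$ which makes the $r^{-1}$-weighted $L^2$ norm nearly divergent.
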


\section{Mode stability by Linear Liouville argument}\label{sec3}

In this section, we prove Theorem and \ref{thmspecH0} and Theorem \ref{thmextspec} via \textit{linear Liouville argument}. The spectral property Proposition \ref{propspecprop} will be a core input for both proofs. 

\subsection{Mode stability for $\calH_0$}
 
We first recall the definition of endpoint resonance from \cite{schlag2005dispersive}).
\begin{definition}[Resonance] \label{defres}
    We say that $E = \pm 1$ is a resonance of $\calH_0$ provided $\calH_0 f = \pm f$ has a solution $f \notin L^2(\RR^d)$ but with 
    \be
     f \in L^\infty(\RR) \quad {\rm if}\,\,d=1; \quad  \la x \ra^{-\frac 12 -\epsilon} f \in L^2(\RR^d)\quad \forall \epsilon > 0,\quad {\rm if}\,\, d \ge 2. \label{eqresonancedef}
    \ee
\end{definition}
And we have the following regularity lemma. 
\begin{lemma}[Regularity of resonance state \cite{MR2219305,MR2480603}]\label{lemresonancereg}
    Suppose $E = \pm 1$ is a resonance of $\calH_0$ in $\RR^d$ with $f$ being the resonance state. Then $\nabla f \in L^2$ and $f \in H^\infty_{loc}$.
\end{lemma}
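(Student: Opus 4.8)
The plan is to combine standard interior elliptic theory (for the $H^\infty_{loc}$ statement and an a priori polynomial bound on $f$) with a decay analysis of the two scalar components of the resonance state, one of which turns out to be ``massive'' and the other ``massless''. First I would note that, by Definition \ref{defres}, $f\in L^2_{loc}(\RR^d)$ in every dimension (for $d\ge 2$ from $\langle x\rangle^{-1/2-\epsilon}f\in L^2$, for $d=1$ from $f\in L^\infty$). Writing $\calH_0 f=\pm f$ componentwise, each component of $f$ solves $\Delta f_i=(\text{smooth bounded coefficients})\cdot f$, so interior elliptic estimates give $f\in H^2_{loc}$; since $W_1,W_2\in C^\infty$ (as $Q\in C^\infty$) the right-hand side is then $H^2_{loc}$, whence $f\in H^4_{loc}$, and iterating $f\in H^\infty_{loc}\subset C^\infty(\RR^d)$. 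Applying the same interior estimate on the unit balls $B(x_0,1)$ and inserting the weighted-$L^2$ membership of $f$ (resp.\ $f\in L^\infty$ when $d=1$) would upgrade this to the pointwise bound $|f(x)|\lesssim\langle x\rangle^{1/2+\epsilon}$ for every $\epsilon>0$. In particular $f$ grows at most polynomially, so the potential terms $W_1 f_i$, $W_2 f_i$ decay exponentially (the exponential decay of $W_1,W_2$ inherited from $Q\sim\langle x\rangle^{-(d-1)/2}e^{-|x|}$ beats the polynomial growth of $f$), hence lie in $L^1\cap L^2(\RR^d)$.

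For the bound $\nabla f\in L^2$, I would assume $E=+1$ (the case $E=-1$ is identical after exchanging the two components). Then the first component solves $(\Delta-2)f_1=-W_1 f_1-W_2 f_2=:V_1$ with $V_1$ exponentially decaying. Since the only polynomially bounded distributional solution of $(\Delta-2)u=0$ on $\RR^d$ is $u\equiv 0$ (its Fourier transform would be supported on the empty set $\{|\xi|^2=-2\}$), and $f_1$ is polynomially bounded by the previous step, one must have $f_1=(\Delta-2)^{-1}V_1$, the convolution of $V_1$ with the exponentially decaying Yukawa kernel; hence $f_1$ and all its derivatives decay exponentially, so $\nabla f_1\in L^2(\RR^d)$.

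The second component is the delicate one: it solves the Poisson equation $\Delta f_2=-(W_2 f_1+W_1 f_2)=:-V$ with $V$ exponentially decaying (so $V\in L^1\cap L^2$, with finite first moment). I would write $f_2=(-\Delta)^{-1}V+h_0$ with $h_0$ polynomially bounded and harmonic, hence a harmonic polynomial, and then invoke the growth restriction in Definition \ref{defres}: it forces $h_0\equiv 0$ when $d\ge 2$ and $h_0$ constant (so $\nabla h_0=0$) when $d=1$, and in the borderline dimensions it additionally forces the leading term of the Newtonian potential to cancel, i.e.\ $\int_{\RR^d}V=0$ for $d=2$ (otherwise $f_2\sim-\tfrac1{2\pi}(\int V)\log|x|$, incompatible with $\langle x\rangle^{-1/2-\epsilon}f_2\in L^2(\RR^2)$ for small $\epsilon$) and for $d=1$ (otherwise $f_2$ grows linearly, contradicting $f_2\in L^\infty$). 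Expanding the fundamental solution in the far field then gives $|\nabla f_2(x)|\lesssim\langle x\rangle^{-(d-1)}$ for $d\ge 3$, $|\nabla f_2(x)|\lesssim\langle x\rangle^{-2}$ for $d=2$, and exponential decay of $f_2'$ for $d=1$; in each case the resulting rate is square-integrable over $\RR^d$, so $\nabla f_2\in L^2(\RR^d)$, and combined with the previous paragraph, $\nabla f\in L^2(\RR^d)$.

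The main obstacle will be exactly the low-dimensional cases $d=1,2$: there the ``generic'' behaviour of the massless component (linear growth for $d=1$, logarithmic growth for $d=2$, and even after removing the harmonic part a gradient decaying only like $\langle x\rangle^{-(d-1)}$, which is borderline non-$L^2$ in $d=2$) is not good enough, and one has to use the resonance growth condition of Definition \ref{defres} quantitatively — not merely to kill the harmonic polynomial but to force the vanishing of $\int V$ — in order to gain the extra decay that places $\nabla f$ in $L^2$. Everything else should be routine elliptic regularity and convolution estimates with exponentially decaying data.
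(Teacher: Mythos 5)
Your proposal is correct, and it takes a genuinely different route from the paper. The paper's proof is essentially a citation: for $d=1$ it invokes the sharper characterization of \cite[Lemma~5.19]{MR2219305}, and for $d\ge 2$ it points to the localized energy estimate of \cite[Lemma~16]{MR2480603} applied to the scalar factored equation $L_-L_+ f_1=f_1$ (the product of the two second-order operators arising from the anti-diagonalization of $\calH_0$), with interior elliptic regularity giving the $H^\infty_{loc}$ claim. You instead work componentwise with the original matrix form, observe that the two scalar components decouple into a \emph{massive} equation $(\Delta-2)f_1=-W_1f_1-W_2f_2$ (for $E=+1$) and a \emph{massless} one $\Delta f_2=-(W_2f_1+W_1f_2)$, show $f_1$ decays exponentially by representing it as the Yukawa potential of an exponentially decaying source (after killing the homogeneous part, which is trivial since $(\Delta-2)$ has no bounded nullspace), and then control $\nabla f_2$ through the Newtonian potential plus the far-field expansion. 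Your observation that the growth restriction in Definition \ref{defres} does double duty in $d\le 2$ --- killing the harmonic part \emph{and} forcing $\int V=0$ so that the leading $\log|x|$ (resp.\ linear) term cancels --- is exactly what replaces the cut-off energy estimate of \cite{MR2480603} and the 1D structure exploited in \cite{MR2219305}. The trade-off: the paper's route is shorter on the page but outsources the analysis to two external lemmas, while yours is self-contained and makes the mechanism explicit; yours also treats $d=1$ by the same argument as $d\ge 2$ rather than by a separate citation. One small point worth being explicit about if this were written out in full: the polynomial bound $|f(x)|\lesssim\langle x\rangle^{1/2+\epsilon}$ you obtain from local elliptic estimates plus the weighted-$L^2$ membership should be stated with the caveat that the implicit constant depends on $\epsilon$, which is harmless since $W_1,W_2$ decay exponentially and overwhelm any fixed polynomial growth.
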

\begin{proof}
    For $d=1$, we use a sharper characterization from \cite[Lemma 5.19]{MR2219305}. 
    For $d \ge 2$, the $\dot H^1$ boundedness follows the proof \cite[Lemma 16]{MR2480603} via energy estimate of the scalar version of eigenequation
    \[ L_- L_+ f_1 = f_1. \]
    where we write $f = \left( \begin{array}{c}  f_1 \\ f_2 \end{array}\right)$.
    Standard interior elliptic regularity for $\calH_0 f = \pm f$ will imply the $H^\infty_{loc}$ smoothness.
\end{proof}

\mbox{}

Now we are in place to prove Theorem \ref{thmspecH0}.

\begin{proof}[Proof of Theorem \ref{thmspecH0}]
  \underline{1. Formulation of problem.}
  
  From Weyl's criterion and the decay of potential $W_1$, $W_2$, we know $\sigma_{\rm ess}(\calH_0) = \sigma_{\rm ess}(\calH_0) =  (-\infty, -1] \cup [1, \infty)$ where $\calH_0 = (\Delta-1)\sigma_3$. With \eqref{eqnullspaceHL2} characterizing the eigenspace at $0$, the theorem reduces to 
  \begin{enumerate}
      \item For any $\l \in \CC - \RR$, there exists no non-trivial $L^2$ solution $Z_0$ of $\calH_0 Z_0 = \l Z_0$;
      \item For any $\l \in \RR - \{ \pm 1, 0\}$, there exists no non-trivial $L^2$ solution $Z_0$ of $\calH_0 Z_0 = \l Z_0$; and there exists no non-trivial solution $Z_0 \notin L^2$ satisfying \eqref{eqresonancedef} for $\l = \pm 1$ either.  
  \end{enumerate}

  Suppose there exists $Z_0 = \left( \begin{array}{c}
       Z_0^{(1)}  \\
        Z_0^{(2)}
  \end{array}
  \right) \in L^\infty \cup L^2$ solves $\calH_0 Z_0 = \l Z_0$ with $\l \neq 0$. Then we have 
  \[ Z_0 \perp_{(L^2)^2} N_g (\calH_0^*), \]
  namely 
  \be
     Z_0^{(1)} +  Z_0^{(2)} \perp_{L^2} Q, |x|^2 Q, xQ \quad  Z_0^{(1)} -  Z_0^{(2)} \perp_{L^2} Q_1, \rho, \nabla Q. \label{eqorthoZ0}
  \ee
  Here the inner product makes sense due to the good decay \eqref{eqdecayeigen}. 
  Moreover, consider 
  \[ i\pa_t Z + \calH_0 Z = 0,\quad Z\big|_{t=0} = Z_0, \]
  then the global solution is $Z(t) = e^{i\l t} Z_0$ for $t \in \RR$, which also satisfies \eqref{eqorthoZ0}. 

  Let 
  $$ X(t) = \left( \begin{array}{c}
       u(t) \\
       w(t) 
  \end{array} \right) =: \left( \begin{array}{c}
       \frac 12 \left( Z^{(1)}(t)+ Z^{(2)}(t) \right)\\
       -\frac i2 \left( Z^{(1)}(t)- Z^{(2)}(t)\right)
  \end{array} \right), $$
  then we have 
  \bea
 \left|\begin{array}{l}
 L_+ u = -i\l w \\
 L_- w = i\l u 
   \end{array} \right.,\quad 
   \left|\begin{array}{l}
        \pa_t u = L_- w \\
        \pa_t w = -L_+ u
   \end{array} \right., \label{eqeigeneqX} \\
  X(t) = X_0 e^{i\l t}  \label{eqexplicitX} \\
   u(t) \perp_{L^2}  Q, |x|^2 Q, xQ, \quad  w(t)  \perp_{L^2} Q_1, \rho, \nabla Q. \label{eqorthoXt}
  \eea
  
  Next, we prove (1) and (2) separately by showing such $X_0$ must be trivial. 

  \mbox{}

  \underline{2. Rigidity for $\Im \l \neq 0$.} 
  Since $u, w \in L^2$ and $W_1, W_2 \in H^\infty$ from \eqref{eqdecayeigen}, the eigenequation immediately implies 
  \[ w = (1-\Delta )^{-1} \left( (W_1 - W_2)w - i\l u  \right) \in H^2,\quad u = (1-\Delta )^{-1} \left( (W_1 + W_2)u + i\l w  \right) \in H^2. \]
  And inductively, we have $u, w \in H^\infty$. 
  Now consider 
  \[ E(t) := (L_+ u, u)_{L^2} + (L_- w, w)_{L^2}. \]
  From the evolution and eigen equation \eqref{eqeigeneqX} and self-adjointness of $L_\pm$, we have
  \bee \pa_t E(t) &=&  \left( L_+ \pa_t u, u\right) + \left( L_+  u,\pa_t u\right) + \left( L_- \pa_t w, w\right) + \left( L_- w,\pa_t w\right) \\
   &=& -(L_- w, L_+u) - (L_+u, L_-w) + (L_+ u, L_-w) + (L_-w, L_+ u) = 0.
   \eee
   On the other hand, the spectral property \eqref{eqcoerLpm} and the explicit form of $X(t)$ \eqref{eqexplicitX} indicates 
   \bee
    E(t) \sim \| u(t) \|_{H^1}^2 + \| w(t)\|_{H^1}^2 = e^{-\Im \l t} \left( \| u_0 \|_{H^1}^2 + \| w_0\|_{H^1}^2\right)
   \eee
   These two conditions forces $u_0 = w_0 = 0$. 

   \mbox{}

   \underline{3. Rigidity for $\Im \l = 0$.}

\mbox{}

   \textit{(i) Improvement of regularity for $X(t)$.}
   For $\l = \pm 1$, Lemma \ref{lemresonancereg} implies 
   \be \la x \ra^{-\frac 12 -} u, \la x \ra^{-\frac 12 -} w \in L^2,\quad u, w \in \dot H^1 \cap  H^\infty_{loc}. \label{eqreguw}  \ee
   For $\l \in \RR - \{ \pm 1, 0\}$, similar argument as Step 2 implies $u, w \in H^\infty$ which also satisfies \eqref{eqreguw}.
   
   
   \mbox{}

   \textit{(ii) Modulation analysis.}

   Let 
   \be
   \ut(t) =u(t) + \gamma(t) Q_1,\quad \wt(t) = w(t) + \upsilon(t)Q \label{eqdefuwt}
   \ee
   with 
   \[ \gamma(t) = -\frac{(u(t), Q_1)_{L^2}}{\| Q_1\|_{L^2}^2},\quad \upsilon(t) = -\frac{(w(t), Q_2)_{L^2}}{(Q, Q_2)_{L^2}} = \frac{(w(t), Q_2)_{L^2}}{\| Q_1\|_{L^2}^2}.
   \]
   Recall \eqref{eqexplicitX}, we have $\gamma(t) = \gamma(0) e^{i\l t}$, $\upsilon(t) = \upsilon(0) e^{i\l t}$. Hence 
   \be
    \ut(t) = \ut(0) e^{i\l t} , \wt(t) = \wt(0) e^{i\l t}\label{eqexplicitXt}
   \ee
   have the same regularity as \eqref{eqreguw}. 
   
   Together with \eqref{eqorthoXt} and $Q_1 \perp_{L^2} Q, xQ$, $Q \perp Q_1, \nabla Q$, we have
   \be
     \ut \perp_{L^2} Q, Q_1, xQ,\quad \wt \perp_{L^2} Q_1, Q_2, \nabla Q. \label{eqorthouwt}
   \ee
   We compute the evolutionary equation of $\ut$, $\wt$: 
   \be
 \left|\begin{array}{l}
        \pa_t \ut = L_- \wt +\a(t) Q_1 \\
        \pa_t \wt = -L_+ \ut + \beta(t) Q
   \end{array} \right. \label{eqevouwt}
   \ee
   where 
   \bee  \a(t) = \gamma'(t) = i\l \gamma(0) e^{i\l t} ,\quad \beta(t) =- 2\gamma(t) + \upsilon'(t) = e^{i\l t}\left( -2\gamma(0) + i\l \upsilon(0) \right).  
   \eee
   Recall the definition of $\gamma(0), \upsilon(0)$ above, we have 
   \be |\a(t)| \lesssim \| e^{-\frac 14|x|} u_0 \|_{L^2},\quad |\beta(t)| \lesssim \| e^{-\frac 14|x|} u_0 \|_{L^2} +  \| e^{-\frac 14|x|} w_0 \|_{L^2}. \label{eqabeta}\ee
   Besides,  $\ut(t), \wt(t)$ have the same regularity as $u(t), w(t)$ in \eqref{eqreguw}. 

\mbox{}

   \textit{(iii) Truncated Virial identity and rigidity of $\ut, \wt$.}
   
Recall the smooth cutoff $\chi_R$ for $R > 0$ from \eqref{eqdefchiR}, and define the truncated vector field $\Phi_{R}(x):=x \chi_{R}(x)$. Then we define the truncated scaling generator for $R > 0$ as
\be 
\Lambda_{R}:=\frac{1}{2}\left(\nabla \circ \Phi_{R}+\Phi_{R} \circ \nabla\right)=\chi_{R}\Lambda_0 +x \cdot \nabla \chi_{R},
\ee
and in particular $\Lambda_R$ is also anti-symmetric.

Now we define the truncated Virial identity
\be
 I(t) := \Re \int_{\RR^d} \Phi_R \cdot \left(-\nabla \ut(t) \cdot  \overline{\wt(t)} + \nabla \wt(t) \cdot \overline{\ut(t)}\right) dx = -2\Re (\Lambda_R \ut, \wt)_{L^2}
\ee
It is well-defined since $\ut, \wt \in H^\infty_{\rm loc}$. By \eqref{eqexplicitXt}, it is indeed conserved
\be I(t) = I(0),\quad \forall t \in \RR. \label{eqItconserve}\ee

Compute with evolution equation for $\ut, \wt$ \eqref{eqevouwt} 
\bea
\pa_t I &=& -2\Re (\Lambda_R \pa_t \ut, \wt)_{L^2} -2 \Re (\Lambda_R \ut, \pa_t \wt)_{L^2}\nonumber \\
&=& -2\Re (\Lambda_R (L_- \wt + \a Q_1), \wt)_{L^2} -2\Re (\Lambda_R \ut, (-L_+ \ut + \beta Q))_{L^2}\nonumber \\
  &=& \Re \left( [L_-, \Lambda_R] \wt, \wt \right)_{L^2} + \Re \left( [L_+, \Lambda_R] \ut, \ut \right)_{L^2}- \Re \left(2\a \Lambda_R Q_1, \wt  \right) + \Re \left(2\beta \Lambda_R Q, \ut  \right) \nonumber \\
  &=& \Re (\chi_R L_1 \ut, \ut)_{L^2} + \Re (\chi_R L_2 \wt, \wt)_{L^2} \label{eqVirialline1} \\
  &+&\Re \left( [-\Delta, \chi_R] x\cdot \nabla \ut, \ut \right)_{L^2} + \Re \left( [-\Delta, \chi_R] x\cdot \nabla \wt, \wt \right)_{L^2} \label{eqVirialline2} \\
  &-& \Re \left( 2\beta (1-\chi_R) Q_1, \ut \right)_{L^2} + \Re \left( 2\a (1-\chi_R) Q_2, \wt \right)_{L^2}\label{eqVirialline3}
\eea
where in the last line we used the orthogonality $(Q_2, \wt)_{L^2} = (Q_1, \ut)_{L^2} = 0$. Now we estimate each term involving $\ut$, and the estimate for $\wt$ terms are similar.  

\mbox{}

(a) For \eqref{eqVirialline1},
\bee
\Re(\chi_R L_1 \ut, \ut)_{L^2} = (L_1 \ut, \ut)_{L^2} + \Re((1-\chi_R) L_1 \ut, \ut)_{L^2} \gtrsim \| \ut \|_{\dot H^1} + \| \la x \ra^{-\mu_d} \ut \|_{L^2} - o_{R\to \infty}(1)
\eee
Here we used \eqref{eqcoerL12} from Proposition \ref{propspecprop} (2), and the smallness of residual follows the upper bound. Note that the smallness is independent of $t$ because of \eqref{eqexplicitX}.

(b) For \eqref{eqVirialline2}, we compute
\bee
  &&\Re \left( [-\Delta, \chi_R] x\cdot \nabla \ut, \ut \right)_{L^2} = \Re \left(  \left(-\Delta \chi_R  -2\nabla \chi_R \cdot \nabla \right) x\cdot \nabla \ut, \ut \right)_{L^2} \\
  &=& \Re \left( \Delta \chi_R  x\cdot \nabla \ut, \ut \right)_{L^2} + \Re \left(  x\cdot \nabla \ut, 2\nabla \chi_R \cdot \nabla \ut \right)_{L^2} \\
  &=& -\frac 12 \Re \left( \nabla \cdot (x\Delta \chi_R) \ut, \ut \right)_{L^2} + \Re \left(  x\cdot \nabla \ut, 2\nabla \chi_R \cdot \nabla \ut \right)_{L^2}
\eee
So 
\bee
  \left| \Re \left( [-\Delta, \chi_R] x\cdot \nabla \ut, \ut \right)_{L^2} \right| \lesssim  \||\cdot|^{-1} \ut \|_{L^2 (B_{2R} - B_R)} + \|\nabla \ut \|_{L^2(B_{2R} - B_R) }= o_R(1)
\eee
the vanishing w.r.t. $R$ comes from the regularity \eqref{eqreguw}. 

(c) For \eqref{eqVirialline3}, we use \eqref{eqabeta}, \eqref{eqexplicitXt}
\bee
  &&\left|\Re \left( 2\a (1-\chi_R) Q_1, \ut \right)_{L^2}\right|\\
  &\le& 2\left( \| e^{-\frac 14 |x|} u_0 \|_{L^2} + \| e^{-\frac 14 |x|} w_0 \|_{L^2} \right) \cdot \| e^{-\frac 14 |x|} \ut_0 \|_{L^2(\RR^d - B_R)} =o_R(1).
\eee

\mbox{}

To sum up, taking $R = R(u_0, w_0)$ large enough, we can obtain
\bee
 \pa_t I(t) \gtrsim \| \ut_0\|_{\dot H^1} + \| \la x \ra^{-\mu_d} \ut_0 \|_{L^2} +  \| \wt_0 \|_{\dot H^1} + \| \la x \ra^{-\mu_d} \wt_0 \|_{L^2} \ge 0. 
\eee
By conservation of $I(t)$ \eqref{eqItconserve}, we must have 
\be \ut_0 = \wt_0 = 0. \label{eqrigiduwt} \ee

\mbox{}

\textit{(iv) Rigidity of $u, w$.}

Finally, recall that $u_0 \perp_{L^2} |x|^2 Q, w_0 \perp_{L^2} \rho$, we use \eqref{eqdefuwt} and \eqref{eqrigiduwt} to compute
\bee
 0 &=& \left( \ut_0, |x|^2 Q \right)_{L^2} = \gamma(0) \left( Q_1, |x|^2 Q \right)_{L^2} = -\gamma (0) \| xQ \|_{L^2}^2 \\
 0 &=& \left( \wt_0, \rho \right)_{L^2} = \upsilon(0) \left( Q, \rho \right)_{L^2} = \frac 12 \upsilon(0) \| xQ \|_{L^2}^2
\eee
where we used $L_+ Q_1 = -2Q$ from the algebraic relation $\calH_0 \xi_1 = -i\xi_0$. 
Thus $\upsilon(0) = \gamma(0) = 0$ and hence 
\[ u_0 = w_0 = 0. \]
\end{proof}

\subsection{High energy mode stability for $\calH_b$}

We now prove Theorem \ref{thmextspec} in this subsection. 

    

\begin{proof}[Proof of Theorem \ref{thmextspec}]

It suffices to consider $\delta \in (0, \frac 12)$, and we first restrict $s_{c, 3}(\delta)$ small enough to satisfy Proposition \ref{propQbasymp} and Lemma \ref{lemapriorieigen}. Notice that $\Im \l \le b(\sigma - s_c) \le b^{4+d}$ with $\sigma$ in the range required.

Suppose $Y_0 \in \dot H^\sigma$ satisfying $(\calH_b - \l) Y_0 = 0$ for $\l \in \{ z \in \CC: \Im z < b(\sigma - s_c), |z| \ge \delta\}$, we will show $Y_0 = 0$ by linear Liouville argument when $b(d, s_c) \ll 1$ small enough, whose range determines $s_{c,3}(\delta)$. 

\mbox{}

\underline{1. Formulation of problem.}

\textit{(1) Evolutionary equation.}

Denote the eigenfunction above as $Y_0 = \left( \begin{array}{c}
       Y_0^{(1)}  \\
        Y_0^{(2)}
  \end{array}
  \right)$. It generates a global solution $Y(t) = e^{i\l t} Y_0$ for $t \in \RR$ to 
  \[ i\pa_t Y + \calH_b Y = 0,\quad Y\big|_{t=0} = Y_0. \]

  Let $$ X(t) = \left( \begin{array}{c}
       u(t) \\
       w(t) 
  \end{array} \right) =: \left( \begin{array}{c}
       \frac 12 \left(Y^{(1)}(t)+ Y^{(2)}(t)\right) \\
       -\frac i2 \left( Y^{(1)}(t)- Y^{(2)}(t)\right)
  \end{array} \right), $$
  then $(\calH_b - \l) Y_0 = 0$ transforms into
  \be
    \left|\begin{array}{l}
        L_{+, b} u = \left( -i\l + bs_c - b\Lambda_0 + N_b\right) w\\
        L_{-, b} w = \left( i\l -bs_c + b\Lambda_0 + N_b \right) u
   \end{array} \right. \label{eqeigenY3}
  \ee
  where the $L^2$ self-adjoint operators $L_{\pm, b}$ and related potentials are 
  \be
  L_{\pm, b} = -\Delta + 1 - W_{\pm, b},\quad W_{\pm, b} := W_{1, b} \pm \Re W_{2, b}, \quad N_b = \Im W_{2, b}.\label{eqdefLpmbWpmbNb}
  \ee
  The evolutionary equation becomes
  \be
   \left|\begin{array}{l}
        \pa_t u = L_{-, b} w + (bs_c - b\Lambda_0 - N_b)u \\
        \pa_t w = - L_{+, b} u + (bs_c - b\Lambda_0 + N_b)w
   \end{array} \right., \label{eqevoeqX2}
  \ee
  and 
  \be X(t) = X_0 e^{i\l t}.  \label{eqexplicitX2} \ee


  \mbox{}


\textit{(2) Almost orthogonality conditions.}

Recall the smooth cutoff $\chi_R$ for $R > 0$ from \eqref{eqdefchiR} and that 
$$\supp\chi_R' = B_{2R} - B_{R}.$$
Define the truncated profile 
\[  \Qtb = Q_b \chi_{R_b},\quad R_b = \frac 4b \]
and the counterpart of \eqref{eqdefxi01b} 
\[  \tilde \xi_{0, b} = i\left( \begin{array}{c} \Qtb \\ \bar \Qtb \end{array}\right)^{\top},\quad  \tilde \xi_{1, b} = \frac 12 \left( \begin{array}{c} (\Qtb)_1 \\ \overline{(\Qtb)_1} \end{array}\right)^{\top}. \]
Also we denote 
\[ (\Qtb)_k = \Lambda^k \Qtb,\quad (Q_b)_k = \Lambda^k Q_b,\quad {\rm for}\,\,k = 1, 2. \]

We claim the following almost orthogonality conditions
\be
  \left|\left(Y_0, \xi \right)_{L^2} \right| \lesssim |\l|^{-4} b^\frac 16 \| \la x \ra^{-2} Y_0 \|_{L^2},\quad \forall\, \xi \in  N_g (\calH_0^*),  \label{eqaoc1}
  \ee
  where $N_g (\calH_0^*)$ is from \eqref{eqnullspaceH*L2} and \eqref{eqnullspaceHL2}, and the refined counterparts for $\xi \in \{ \xi_{0, 0}, \xi_{1, 0}\}$ 
  \be 
  \begin{split}
  \left| (Y_0, \sigma_3 \tilde \xi_{0, b})_{L^2} \right| \lesssim |\l|^{-1} b^{-1} s_c^\frac 12 \| \la x \ra^{-2} Y_0 \|_{L^2}, \\
\left| (Y_0, \sigma_3 \tilde \xi_{1, b})_{L^2} \right| \lesssim |\l|^{-2}  b^{-1} s_c^\frac 12 \| \la x \ra^{-2} Y_0 \|_{L^2}.
\end{split}\label{eqaoc2}
\ee
In particular, the first condition \eqref{eqaoc1} in $(u, w)$ variables is written as 
\bea
 \left|(u, Q)_{L^2}\right| + \left|(u, |x|^2 Q)_{L^2}\right| + \left|(u, xQ)_{L^2}\right| \lesssim |\l|^{-4} b^\frac 16 \big(\| u \|_{L^2_{\la x \ra^{-2}} } +  \|  w \|_{L^2_{\la x \ra^{-2}}}\big), \label{eqaou} \\
\left|(w, Q_1)_{L^2}\right| + \left|(w, \rho)_{L^2}\right| + \left|(w, \nabla Q)_{L^2}\right| \lesssim |\l|^{-4} b^\frac 16 \big(\| u \|_{L^2_{\la x \ra^{-2}} } +  \|  w \|_{L^2_{\la x \ra^{-2}}}\big). \label{eqaow}
\eea

\mbox{}

Indeed, for \eqref{eqaoc1}, we notice that 
the $L^2$-adjoint operator of $\calH_b$ is  
\[
  \calH_b^* = \left( \begin{array}{cc} \Delta_b -1 &  \\ &  -\Delta_{-b} +1\end{array} \right) +ibs_c + \left( \begin{array}{cc} W_{1, b} & -W_{2, b}  \\ \overline{W_{2, b}} & -W_{1, b}   \end{array} \right)  = \sigma_3 (\calH_b  + 2ibs_c) \sigma_3. 
\]
Hence 
\bee
 \triangle \calH_b:= \calH_b^* - \calH_0^* = (ib\Lambda_0 + ibs_c) I_2 + \triangle V,\quad \triangle V := \left(\begin{array}{cc} W_{1, b}-W_1 & - W_{2, b} + W_2  \\ \overline{W_{2, b}}-W_2 & -W_{1, b}+W_1   \end{array} \right).
\eee
We compute
\bee
 (Y_0, \xi)_{L^2} &=& \l^{-4} (\calH_b^4 Y_0, \xi)_{L^2} = \l^{-4} \left(Y_0, (\calH_0^* + \triangle \calH_b)^4 \xi \right)_{L^2} \\
 &=&  \l^{-4} \left(Y_0, \left((\calH_0^* + \triangle \calH_b)^4 - (\calH_0^*)^4\right) \xi \right)_{L^2},\quad \forall \xi \in N_g (\calH_0^*).
\eee
From \eqref{eqWjbdiffest}, \eqref{eqQbasymp1}, \eqref{eqsolitondecay} and \eqref{eqdecayeigen}, we easily obtain 
$$|\left((\calH_0^* + \triangle \calH_b)^4 - (\calH_0^*)^4\right) \xi | \lesssim \left|\begin{array}{ll}
    b^\frac 13 e^{-\frac 45 r}, &  r\le b^{-\frac 13};\\
    e^{-\frac 45 r},  & r \ge b^{-\frac 13};
\end{array}\right. $$
and therefore \eqref{eqaoc1} follows.

\mbox{}

For \eqref{eqaoc2}, 
we compute using the eigenequation of $Y_0$ and $\xi_{0, b}$, $\xi_{1, b}$ that 
\bee
  (Y_0, \sigma_3 \tilde \xi_{0, b})_{L^2} &=& (\l + 2ibs_c)^{-1} (Y_0, (\calH_b^* -2ibsc) \sigma_3 \tilde \xi_{0, b})_{L^2} \\
  &=& (\l + 2ibs_c)^{-1} (Y_0, \sigma_3 [\calH_b, \chi_{R_b}] \xi_{0, b})_{L^2}\\
 (Y_0, \sigma_3  \tilde \xi_{1, b})_{L^2}
  &=& (\l - 2bi(1 - s_c))^{-1} (Y_0, (\calH_b^* - 2ibs_c + 2bi) \sigma_3  \tilde \xi_{1, b} )_{L^2} \\
  &=&(\l - 2bi(1 - s_c))^{-1} (Y_0, \sigma_3  [\calH_b \Lambda, \chi_{R_b}] \xi_{1, b}) - i \sigma_3 \tilde \xi_{0, b} )_{L^2}
  \eee
Then \eqref{eqaoc2} follows this computation, $|\l| \ge b^\frac 12 \gg b$, 
\[ [\calH_b, \chi_{R_b}] = \sigma_3 (\Delta \chi_{R_b} + 2\nabla \chi_{R_b} \cdot \nabla) + ib|x|\chi_{R_b}',\quad [\Lambda, \chi_{R_b}] =|x|\chi_{R_b}',
\]
and
\[  \| \pa_r^n Q_b\|_{L^2(B_{2R_b} - B_{R_b})}^2 \lesssim_n s_c,\quad \forall \, n \ge 0,  \]
from \eqref{eqQbasymp1}. 

\mbox{}

Besides, we also record the algebraic relations about $\xi_{0, b}$, $\xi_{1, b}$ in \eqref{eqQbalgrel} in their scalar form
\bea
  L_{-, b} \Re Q_b + (b\Lambda + N_b) \Im Q_b &=& 0 \label{eqeigenrelation1}\\
  L_{+, b} \Im Q_b - (b\Lambda - N_b) \Re Q_b &=& 0 \label{eqeigenrelation2}\\
  L_{+, b} \Re (Q_b)_1 + (b\Lambda - N_b) \Im (Q_b)_1 &=& -2\Re Q_b - 2b\Im (Q_b)_1 \label{eqeigenrelation3}\\
  L_{-, b} \Im (Q_b)_1 - (b\Lambda + N_b) \Re (Q_b)_1 &=& -2\Im Q_b + 2b\Re (Q_b)_1. \label{eqeigenrelation4}
\eea
They will be used later.

\mbox{}

\textit{(3) Modulation analysis.}

   Let 
   \be
   \left| \begin{array}{l}
        \ut(t) =u(t) + \gamma(t) \Re (\Qtb)_1 - \upsilon(t) \Im \Qtb, \\ \wt(t) = w(t) + \gamma(t) \Im (\Qtb)_1 + \upsilon(t) \Re \Qtb,
   \end{array}\right.
   \label{eqdefuwt2}
   \ee
   with $\gamma(t)$ and $\upsilon(t)$ such that 
   \be
    (\ut, \Re (\Qtb)_1)_{L^2} + (\wt, \Im (\Qtb)_1)_{L^2} =  (\ut, -\Im (\Qtb)_2)_{L^2} + (\wt, \Re (\Qtb)_2)_{L^2} = 0.\label{eqorthoutwt1}
   \ee
   We claim the existence of $\gamma(t), \upsilon(t)$ with the following bound
   \bea
   |\gamma(t)| + |\upsilon(t)| \lesssim \la \l \ra^{-1} \left(\| \la x \ra^{-2} u \|_{L^2} + \| \la x \ra^{-2} w \|_{L^2}\right),
   \label{eqgammadeltabdd} 
   \eea
   and the equivalence of norms
   \be
    \| \la x \ra^{-2} u \|_{L^2} + \| \la x \ra^{-2} w \|_{L^2} \sim \| \la x \ra^{-2} \ut \|_{L^2} + \| \la x \ra^{-2} \wt \|_{L^2}.\label{equwutwtequiv}
   \ee
   Thereafter, recalling \eqref{eqexplicitX2}, we have 
   \be \gamma(t) = \gamma(0) e^{i\l t},\quad \upsilon(t) = \upsilon(0) e^{i\l t} \quad \Rightarrow \quad \gamma'(t) = i \l \gamma(t), \quad \upsilon'(t) = i \l \upsilon(t).\label{eqexplicitgd}\ee
   In particular, 
   \be
    \ut(t) = \ut(0) e^{i\l t} , \quad \wt(t) = \wt(0) e^{i\l t} \label{eqexplicitXt2}
   \ee
   have the same regularity as $u, w$ from Lemma \ref{lemapriorieigen} (1).
   
   Indeed, we will only prove \eqref{eqgammadeltabdd} and the $\lesssim$ direction of \eqref{equwutwtequiv}, which will imply the $\gtrsim$ direction of \eqref{equwutwtequiv} from the definition \eqref{eqdefuwt2}. For \eqref{eqgammadeltabdd}, plugging the orthogonal conditions into the equation implies 
   \bea
    &&\left( \begin{array}{cc}
       \| (\Qtb)_1\|_{L^2}^2 & -2(\Im \Qtb, \Lambda_0 \Re(\Qtb))_{L^2} \\
       2 (\Im (\Qtb)_1, \Lambda_0 \Re (\Qtb)_1)_{L^2}  & -  \| (\Qtb)_1\|_{L^2}^2 
    \end{array}\right)
    \left( \begin{array}{c}
         \gamma  \\
         \upsilon 
    \end{array} \right)\nonumber
    \\
    &=&
    \left( \begin{array}{c}
         - (u,  \Re (\Qtb)_1 )_{L^2} - (w,  \Im (\Qtb)_1)_{L^2}  \\
          - (u,  -\Im (\Qtb)_2 )_{L^2} - (w,  \Re (\Qtb)_2)_{L^2}
    \end{array} \right).  \label{eqsbsbsb}
   \eea
From Proposition \ref{propQbasymp}, we see
   \be \| \la x \ra^2  \Im (\Qtb)_k \|_{L^2} \lesssim_k b,\quad \| \la x \ra^2  \Re (\Qtb)_k \|_{L^2} \lesssim_k 1,\quad  {\rm for}\,\,k \ge 0.\label{eqQtbest}
   \ee
   This easily implies that the coefficient matrix on LHS has a uniformly bounded inverse, and the RHS is bounded by $O\left(\| \la x \ra^{-2} u \|_{L^2} + \| \la x \ra^{-2} w \|_{L^2}\right)$. To conclude \eqref{eqgammadeltabdd}, it suffices to show $O(|\l|^{-1})$ smallness of RHS of \eqref{eqsbsbsb}, which follows the computation involving \eqref{eqeigenY3} and \eqref{eqQtbest}. We estimate the first component as 
   \bee
      &&i\l \left[(u,  \Re (\Qtb)_1 )_{L^2} + (w,  \Im (\Qtb)_1)_{L^2} \right]\\
      &=& (L_{-, b}w - (b\Lambda + N_b)u,  \Re (\Qtb)_1 )_{L^2} - (L_{+, b}u + (b\Lambda - N_b)w,  \Im (\Qtb)_1 )_{L^2} \\
      &=& -(u, (-b\Lambda -2bs_c + \bar N_b) \Re (\Qtb)_1 + L_{+, b} \Im (\Qtb)_1 )_{L^2} \\
      &+& (w, (b\Lambda + 2bs_c + \bar N_b) \Im (\Qtb)_1 + L_{-, b} \Re (\Qtb)_1 )_{L^2} \\
      &=& O\left(\| \la x \ra^{-2} u \|_{L^2} + \| \la x \ra^{-2} w \|_{L^2}\right)
   \eee
   and the second component can be bounded likewise. 
   
   For the $\lesssim$ direction of \eqref{equwutwtequiv}, we rewrite \eqref{eqaoc1} with $\xi = \xi_{2, 0}$ and $\xi_{3, 0}$ as
   \bee
     \left| (\ut, |x|^2 Q)_{L^2} - \gamma (\Re (\Qtb)_1,  |x|^2 Q)_{L^2} + \upsilon (\Im \Qtb,  |x|^2 Q)_{L^2} \right| \lesssim |\l|^{-4} b^\frac 16 \| \la x \ra^{-2} Y_0 \|_{L^2}, \\
      \left| (\wt, \rho)_{L^2} - \gamma (\Im (\Qtb)_1,  \rho)_{L^2} - \upsilon (\Re \Qtb,  \rho)_{L^2} \right| \lesssim |\l|^{-4} b^\frac 16 \| \la x \ra^{-2} Y_0 \|_{L^2}.
   \eee
From Proposition \ref{propQbasymp} and \eqref{eqrhobQ}, we obtain
   \bee
     (\Re (\Qtb)_1,  |x|^2 Q)_{L^2} = -\|xQ \|_{L^2}^2 + O(b^\frac 16),&& (\Im \Qtb,  |x|^2 Q)_{L^2} = O(b^\frac 16),\\
     (\Re \Qtb, \rho)_{L^2} = \frac 12 \|xQ \|_{L^2}^2 + O(b^\frac 16),&& (\Im (\Qtb)_1,  \rho)_{L^2} = O(b^\frac 16).
   \eee
   Hence with \eqref{eqgammadeltabdd} and $|x|^2 Q, \rho \in L^2_{\la x\ra^2}$ \eqref{eqdecayeigen}, we have (with $|\l| \ge \delta$)
   \bee
    |\gamma| \lesssim \| \ut \|_{L^2_{\la x \ra^{-2}}} + \delta^{-4} b^\frac 16 \| \la x \ra^{-2} Y_0 \|_{L^2} \\
    |\upsilon| \lesssim \| \wt \|_{L^2_{\la x \ra^{-2}}} +  \delta^{-4} b^\frac 16 \| \la x \ra^{-2} Y_0 \|_{L^2}.
   \eee
   Plugging it back to \eqref{eqdefuwt2} and taking $L^2_{\la x \ra^{-2}}$ norm yield
   \bee
     \| u\|_{L^2_{\la x \ra^{-2}}} + \| w\|_{L^2_{\la x \ra^{-2}}} \lesssim \| \ut \|_{L^2_{\la x \ra^{-2}}} +  \| \wt \|_{L^2_{\la x \ra^{-2}}} + \delta^{-4} b^\frac 16 \| \la x \ra^{-2} Y_0 \|_{L^2}. 
   \eee
Then $\lesssim$ direction of \eqref{equwutwtequiv} follows by absorbing the last term in RHS to the left when $b \ll \delta^{-24}$. 

 \mbox{}

   Next, we compute the evolutionary equation of $\ut$, $\wt$: 
   \be
 \left|\begin{array}{l}
        \pa_t \ut = L_{-, b} \wt + (bs_c - b\Lambda_0 - N_b)\ut + f(t) \\
        \pa_t \wt = - L_{+, b} \ut + (bs_c - b\Lambda_0 + N_b)\wt + g(t)
   \end{array} \right. \label{eqevouwt2}
   \ee
   where
\bee
     f(t) &=& -\gamma \left( L_{-, b} \Im (\Qtb)_1 - (b\Lambda + N_b) \Re (\Qtb)_1 \right) + \gamma' \Re (\Qtb)_1 \\
     &-& \upsilon \left( L_{-,b} \Re \Qtb - (b\Lambda + N_b) (-\Im \Qtb)\right) + \upsilon' (-\Im \Qtb)  \\
     &=&  (2\gamma - \upsilon')\Im \Qtb + (-2b\gamma + \gamma') \Re (\Qtb)_1 -\gamma R_1 -\upsilon R_2 \\
     g(t) &=&  \gamma \left( L_{+, b} \Re (\Qtb)_1 + (b\Lambda - N_b) \Im (\Qtb)_1 \right) + \gamma' \Im (\Qtb)_1 \\
     &+& \upsilon \left( L_{+,b} (-\Im \Qtb) + (b\Lambda - N_b) \Re \Qtb)\right) + \upsilon' \Re \Qtb \\
     &=& (-2\gamma + \upsilon') \Re \Qtb + ( -2b\gamma + \gamma') \Im (\Qtb)_1 + \gamma R_3 + \upsilon R_4. 
   \eee
   Here we applied \eqref{eqeigenrelation1}-\eqref{eqeigenrelation4} and the error terms $R_k$ for $1 \le k \le 4$ are given by
   \bee
   R_1 &=& 2b[\chi_{R_b}, \Lambda]\Re Q_b +  [L_{-, b}\Lambda, \chi_{R_b}]\Im Q_b - [b\Lambda^2 + N_b \Lambda, \chi_{R_b}]\Re Q_b,\\
   R_2 &=& [L_{-, b}, \chi_{R_b}] \Re Q_b + b[\Lambda, \chi_{R_b}] \Im Q_b \\
   R_3 &=& -2b[\chi_{R_b}, \Lambda]\Im Q_b + [L_{+, b} \Lambda, \chi_{R_b}] \Re Q_b + [b\Lambda^2 - N_b \Lambda, \chi_{R_b}]\Im Q_b \\
   R_4 &=& [L_{+, b}, \chi_{R_b}] (-\Im Q_b) + b[\Lambda, \chi_{R_b}] \Re Q_b.
   \eee
%
%
Note that $R_k$ for $1 \le k \le 4$ all involve commutator with $\chi_{R_b}$ thus are supported on $r \in \left[\frac 4b, \frac 8b\right]$. So we bound these residuals by the pointwise estimates \eqref{eqQbasymp1}, \eqref{eqWjbdiffest}, and combine \eqref{eqgammadeltabdd}, \eqref{equwutwtequiv}, \eqref{eqexplicitgd} to obtain
\be 
 \| f(t)\|_{H^2_{\la x \ra^2}} + \| g(t)\|_{H^2_{\la x \ra^2}}  \lesssim \| \tilde u \|_{L^2_{\la x \ra^{-2}}}  +  \| \tilde w \|_{L^2_{\la x \ra^{-2}}} \label{eqbddfg}
\ee
\mbox{}

Finally, we check that the almost 
orthogonality conditions for $\ut, \wt$ as
\bea
 \left|(\ut, Q)_{L^2}\right| + \left|(\ut, Q_1)_{L^2}\right| + \left|(\ut, xQ)_{L^2}\right| \lesssim |\l|^{-4} b^\frac 16 \big(\| \ut \|_{L^2_{\la x \ra^{-2}} } +  \|  \wt \|_{L^2_{\la x \ra^{-2}}}\big), \label{eqaout} \\
\left|(\wt, Q_1)_{L^2}\right| + \left|(\wt, Q_2)_{L^2}\right| + \left|(\wt, \nabla Q)_{L^2}\right| \lesssim |\l|^{-4} b^\frac 16 \big(\| \wt \|_{L^2_{\la x \ra^{-2}} } +  \|  \wt \|_{L^2_{\la x \ra^{-2}}}\big), \label{eqaowt} \\
\left| (\ut, \Re \Qtb)_{L^2} + (\wt, \Im \Qtb)_{L^2} \right| \lesssim |\l|^{-1} b^{-1} s_c^\frac 12 \big(\| \ut \|_{L^2_{\la x \ra^{-2}} } +  \|  \wt \|_{L^2_{\la x \ra^{-2}}}\big), \label{eqorthoutwt2} \\
\left| (\ut, \Im (\Qtb)_1)_{L^2} - (\wt, \Re (\Qtb)_1)_{L^2} \right|  \lesssim |\l|^{-2} b^{-1}s_c^\frac 12 \big(\| \ut \|_{L^2_{\la x \ra^{-2}} } +  \|  \wt \|_{L^2_{\la x \ra^{-2}}}\big).\label{eqorthoutwt3}
\eea
The first two inequalities \eqref{eqaout}-\eqref{eqaowt} follow from (almost) orthogonality of \eqref{eqorthoXt} (similar to the proof of \eqref{eqorthouwt} in Theorem \ref{thmspecH0}) and \eqref{eqorthoutwt1} plus boundedness of residuals. We compute
 \bee
  && {\rm LHS \,\,of\,\,\eqref{eqaout}} \\
  &\lesssim& |(\gamma \Re (\Qtb)_1 - \upsilon \Im \Qtb, Q)_{L^2}|  +   |(\gamma \Re (\Qtb)_1 - \upsilon \Im \Qtb, xQ)_{L^2}| \\
  &+& |(\ut, Q_1 - \Re (\Qtb)_1)_{L^2} - (\wt, \Im (\Qtb)_1)_{L^2} | +  |\l|^{-4} b^\frac 16 \big(\| \ut \|_{L^2_{\la x \ra^{-2}} } +  \|  \wt \|_{L^2_{\la x \ra^{-2}}}\big), \\
  &\lesssim& {\rm RHS \,\,of\,\,\eqref{eqaout}}.
 \eee
 Here for the second inequality we used $(Q_1, Q)_{L^2} = (Q_1, xQ)_{L^2} = 0$, \eqref{eqgammadeltabdd}, \eqref{eqQtbest} and 
\[ \sum_{k=0}^2 \left\| \la x \ra^2 \left(\Re (\Qtb)_k - Q_k \right) \right\|_{L^2} \lesssim b^\frac 16 \]
from Proposition \ref{propQbasymp}. The proof of \eqref{eqaowt} is similar. 
For \eqref{eqorthoutwt2}-\eqref{eqorthoutwt3}, notice that they can be obtained by replacing $u, w$ with $\ut, \wt$  in \eqref{eqaoc2}. To replace the $u, w$ on RHS; we apply \eqref{equwutwtequiv}; and for LHS, we view the modulation part as perturbation through
\bee
  \left| (\Re (\Qtb)_1, \Re  \Qtb)_{L^2} \right| + \left| (\Im (\Qtb)_1, \Im \Qtb)_{L^2} \right| &=& s_c \| (\Qtb)_1 \|_{L^2}^2 \lesssim s_c  \\
    \left| (\Re  \Qtb, \Im \Qtb)_{L^2} \right| +  \left| (\Re (\Qtb)_1, \Im (\Qtb)_1)_{L^2} \right| &\lesssim& b^{-2} s_c
\eee
from the anti-symmetry of $\Lambda_0 = \Lambda + s_c$ and the asymptotics in Proposition \ref{propQbasymp}.

\mbox{}

\underline{2. $\Im \l < -b^{4+d}$: energy estimate.}

Consider 
\[ E_b(t) = (L_{+, b} \ut, \ut)_{L^2} + (L_{-, b} \wt, \wt)_{L^2} - 2\Re(N_b \ut, \wt)_{L^2}, \]
and we can compute
\bea
 && e^{2bs_ct} \pa_t \left( e^{-2bs_ct} E_b\right)\nonumber \\
 &=& - b \left\{ ([L_{+, b}, \Lambda_0]\ut, \ut)_{L^2}+   ([L_{-, b}, \Lambda_0]\wt, \wt)_{L^2} - 2 \Re ([N_b, \Lambda_0]\ut, \wt)_{L^2} \right\}\label{eqenergyRHS1}\\
 &+& 2 \Re (\ut, L_{+, b}f - N_b g)_{L^2} +2 \Re (\wt, L_{-, b} g - N_b f)_{L^2} \label{eqenergyRHS2}
\eea
We claim that 
\bea
  \left| \eqref{eqenergyRHS2} \right| &\lesssim& \delta^{-2} b^{-1} s_c^\frac 12 \left(\|\ut \|_{L^2_{\la x \ra^{-2}}}^2 +  \|\wt \|_{L^2_{\la x \ra^{-2}}}^2 \right) 
  \label{eqresfgest} \\
   \eqref{eqenergyRHS1} &\lesssim& -b \left( \| \tilde u\|^2_{\dot H^1 \cap L^2_{\la x \ra^{-2}}} + \| \tilde w\|^2_{\dot H^1 \cap L^2_{\la x \ra^{-2}}}  \right). \label{eqenergy1coer}
\eea
Then $e^{-2bs_ct} E_b$ is monotonically decreasing when $b \ll 1$. Also recall \eqref{eqexplicitXt2} indicates that
\[ e^{2bs_ct}\pa_t \left( e^{-2bs_ct} E_b\right) = e^{2bs_ct} \pa_t \left( e^{(-2bs_c - 2 \Im \l) t } E_b(0) \right) = (-2bs_c - 2\Im \l) E_b.\]
With $\Im \l + bs_c < -b^{4+d} + bs_c < 0$, we have $E_b \le 0$. Further from Proposition \ref{propspecprop} (1) and almost orthogonal conditions \eqref{eqaout}-\eqref{eqaowt}, we know $E_b \ge 0$. Hence $E_b = 0$, $\ut = \wt = 0$, which leads to $Y_0 = 0$ by \eqref{equwutwtequiv}.  

\mbox{}

\textit{Proof of \eqref{eqresfgest}.} Similar to the computation of $f, g$, we apply \eqref{eqeigenrelation1}-\eqref{eqeigenrelation4} and single out the error terms localized away from the origins to get
\bee
  L_{+, b} f - N_b g 
  &=& (2\gamma - \upsilon')b\Re (\Qtb)_1 + (2b\gamma - \gamma')\left( b\Im (\Qtb)_2 + 2 \Re \Qtb + 2b \Im (\Qtb)_1  \right) \\
  &+& (2\gamma - \upsilon') R_5 + (2b\gamma - \gamma') R_6 - \gamma (L_{+, b} R_1 + N_b R_3) - \upsilon (L_{+, b} R_2 + N_b R_4) \\
  L_{-, b}g - N_b f&=& (2\gamma - \upsilon')b\Im (\Qtb)_1 + (2b\gamma - \gamma')\left(-b\Re (\Qtb)_2 + 2 \Im \Qtb - 2b \Re (\Qtb)_1  \right) \\
  &+& (2\gamma - \upsilon') R_7 + (2b\gamma - \gamma') R_8 + \gamma (L_{-, b} R_3 + N_b R_1) + \upsilon (L_{-, b} R_4 + N_b R_2)
\eee
where
\bee
  R_5 &=& [L_{+, b}, \chi_{R_b}]\Im Q_b - b[\Lambda, \chi_{R_b}]\Re Q_b \\
  R_6 &=& [\chi_{R_b}, \Lambda^2] b\Im Q_b +[\chi_{R_b}, \Lambda]2b \Im Q_b -  [L_{+, b}\Lambda, \chi_{R_b}]\Re Q_b + [N_b \Lambda, \chi_{R_b}] \Im Q_b \\
  R_7 &=& - [L_{-, b}, \chi_{R_b}]\Re Q_b - b[\Lambda, \chi_{R_b}]\Im Q_b \\
  R_8 &=& -[\chi_{R_b}, \Lambda^2] b\Re Q_b - [\chi_{R_b}, \Lambda] 2b\Re Q_b - [L_{-, b}\Lambda, \chi_{R_b}]\Im Q_b + [N_b \Lambda, \chi_{R_b}] \Re Q_b.
\eee
Now we compute 
\bee
  &&\Re (\ut, L_{+, b}f - N_b g)_{L^2} + \Re (\wt, L_{-, b} g - N_b f)_{L^2} \\
  &=& \Re \left\{ \overline{(2\gamma - \upsilon')}b\left[(\ut, \Re (\Qtb)_1)_{L^2} + (\wt, \Im (\Qtb)_1)_{L^2} \right]\right\} \\
  &+&  \Re  \Big\{ \overline{(2b\gamma - \gamma')}\big[(\ut, b \Im  (\Qtb)_2 + 2\Re \Qtb + 2b \Im (\Qtb)_1)_{L^2} \\
  &&+ (\wt, -b\Re (\Qtb)_2 + 2 \Im \Qtb - 2b \Re (\Qtb)_1)_{L^2} \big]\Big\} \\
  &+& \Re (\ut, (2\gamma - \upsilon') R_5 + (2b\gamma - \gamma') R_6 - \gamma (L_{+, b} R_1 + N_b R_3) - \upsilon (L_{+, b} R_2 + N_b R_4))_{L^2} \\
  &+& \Re (\wt, (2\gamma - \upsilon') R_7 + (2b\gamma - \gamma') R_8 + \gamma (L_{-, b} R_3 + N_b R_1) + \upsilon (L_{-, b} R_4 + N_b R_2))_{L^2}
\eee
Note that $R_k$ for $1 \le k \le 8$ are supported on $r \in \left[\frac 4b, \frac 8b\right]$.
We apply the boundedness of $\gamma, \upsilon$ \eqref{eqgammadeltabdd}, \eqref{equwutwtequiv} and \eqref{eqexplicitgd} for their derivatives, (almost) orthogonal conditions \eqref{eqorthoutwt1}, \eqref{eqorthoutwt2} and \eqref{eqorthoutwt3}, and the pointwise  bound \eqref{eqQbasymp1}-\eqref{eqQbasymp2} for $R_k$, $1 \le k \le 8$, to obtain the estimate \eqref{eqresfgest}.

\mbox{}

\textit{Proof of \eqref{eqenergy1coer}.}
Since $\calH_b$ preserves spherical harmonic decomposition (see \cite[(1.25)]{Limodestability1} or \cite[Chap. IV, Sec. 2]{MR0304972}) thanks to the radial symmetry of $Q_b$, we can assume $Y_0$ belongs to one spherical class. So we consider the following two cases.

(i) Suppose $d \neq 2$, or $d = 2$ with $Y_0$ satisfies $\int_{|x| = r} Y_0 dz = 0$ for almost every $r > 0$, namely it has no radial components in the spherical harmonics decomposition.  Then \eqref{eqaout}-\eqref{eqaowt} and Proposition \ref{propspecprop} yield when $|\l| \ge \delta$ and $b \ll \delta^{24}$,
\bee
  ([L_+, \Lambda_0] \ut, \ut)_{L^2} + ([L_-, \Lambda_0] \wt, \wt)_{L^2} \gtrsim \| \nabla \ut\|_{L^2}^2 + \| \la r \ra^{-1} \ut \|_{L^2}^2 +  \| \nabla \wt\|_{L^2}^2 + \| \la r \ra^{-1} \wt \|_{L^2}^2.
\eee
Also notice $L_{\pm, b} - L_\pm = (W_1 - W_{1, b}) \pm (W_2 - \Re W_{2, b})$ and the estimate from \eqref{eqWjbdiffest} and \eqref{eqQbasymp2}
\[ |x\cdot \nabla (W_1 - W_{1, b})| + |x\cdot \nabla (W_2 - \Re W_{2, b})| + |x\cdot \nabla \Im W_{2, b}| \lesssim b^\frac 16 \la r \ra^{-2}. \]
Thus we have the control for residuals
\bee
\left| ([L_{+, b} - L_+, \Lambda_0]\ut, \ut)_{L^2} \right|+ \left|  ([L_{-, b} - L_-, \Lambda_0]\wt, \wt)_{L^2}\right| + \left| ([N_b, \Lambda_0]\ut, \wt)_{L^2} \right| \\
\lesssim b^\frac 16 \left(  \| \la r \ra^{-1} \ut \|_{L^2}^2 +  \| \la r \ra^{-1} \wt \|_{L^2}^2 \right).
\eee
That concludes the coercivity \eqref{eqenergy1coer}. 

(ii) Suppose $d=2$ and $Y_0 \in L^2$ is radial. Similarly, the leading order is coercive within $L^2_{\la x \ra^{-\frac{11}{10}}}$ from Proposition \ref{propspecprop}. To control the residual, we refine the estimate of potential again from \eqref{eqWjbdiffest} and \eqref{eqQbasymp2}
\[ |x\cdot \nabla (W_1 - W_{1, b})| + |x\cdot \nabla (W_2 - \Re W_{2, b})| + |x\cdot \nabla \Im W_{2, b}| \lesssim b^\frac 16 \la r \ra^{-\frac{11}{5}} + b^{-2} s_c^{\frac{p-1}{2}} (1-\chi_{R_b})\la r \ra^{-2}, \]
and therefore \eqref{eqbasymp} and the delocalization estimate \eqref{eqdelocalrad} imply
\bee
&&\left| ([L_{+, b} - L_+, \Lambda_0]\ut, \ut)_{L^2} \right|+ \left|  ([L_{-, b} - L_-, \Lambda_0]\wt, \wt)_{L^2}\right| + \left| ([N_b, \Lambda_0]\ut, \wt)_{L^2} \right| \\
&\lesssim& b^\frac 16 \left(  \| \la r \ra^{-\frac{11}{10}} \ut \|_{L^2}^2 +  \| \la r \ra^{-\frac{11}{10}} \wt \|_{L^2}^2 \right)
\eee
when $b \ll 1$. 
That yields \eqref{eqenergy1coer} in this case. 

\mbox{}\\

\underline{3. $|\Im \l| \le b^{4+d}$: weighted energy estimate.} 

For notational simplicity, we denote 
\bee \mu = \frac{\Im \l}{b} + s_c \quad \Rightarrow \quad |\mu| \le 2 b^{3+d} \eee
when $s_c \ll 1$ such that $b \ll 1$.

Consider the truncated and weighted energy
\bee
E_{b, \mu} = (\tilde L_{+, b} \varrho^\mu \ut, \varrho^\mu \ut)_{L^2} +  (\tilde L_{-, b} \varrho^\mu \wt, \varrho^\mu \wt)_{L^2} - 2 \Re (\tilde N_b \varrho^\mu \ut, \varrho^\mu \ut)_{L^2}
\eee
where the weight function $\varrho$ and truncated operators $L_{\pm, b}, N_b$ (see \eqref{eqdefLpmbWpmbNb}) are
\bee
  \varrho = (1 + r^4)^{-\frac 14},
\quad
  \tilde L_{\pm, b} = -\Delta + (1 - W_{\pm, b}) \psi_b,\quad
  \tilde N_b = \Im W_{2, b} \psi_b,
\eee
and $\psi_b$ is a $C^3_c$ cutoff function defined below.  

\mbox{}

We require $\psi$ be taken such that $\psi(r)= \left| \begin{array}{ll} 
  1 & r \le 1, \\
  0 & r \ge 2.
  \end{array}\right. \in C^3_{c, rad}(B_2)$ and 
\bee
  -\psi'(r) \sim \min \left\{ r-1, 2-r \right\}^4,\quad |\psi''(r)| \lesssim  \min \left\{ r-1, 2-r \right\}^3,\quad r \in [1, 2].
\eee
This can be constructed using piecewise fourth-order polynomials on $r \in [1, 2]$. Let
\be
R = b^{-2}, \quad \psi_b = \psi(R^{-1}\cdot),\quad  \tilde \psi_b := -r\pa_r \psi_b.
\ee
 We claim
\be
  |\pa_r^2 \psi_b| \lesssim b^\frac 75 (\tilde \psi_b + r^{-\frac{11}{5}}).\label{eqpsibrr}
\ee
Notice that $\pa_r^2 \psi_b$ is supported in $r \in [R, 2R]$, by rescaling \eqref{eqpsibrr} is equivalent to
\be R^{-2} |\psi''| \lesssim b^\frac 75 (-\psi' + R^{-\frac{11}{5}}) \label{eqpsibrr1} \ee
We consider the following two cases. (i) When $r - 1 \in [0, R^{-\frac{3}{10}}]$ or $2 - R \in [0, R^{-\frac 3{10}}]$, we have
\bee
  |\psi''| \lesssim R^{-\frac{9}{10}} = b^{\frac{9}{5}} \le b^\frac 75 R^{-\frac 15}.
\eee
(ii) When $r \in [1 +  R^{-\frac{3}{10}}, 2 - R^{-\frac{3}{10}}]$, we have $|\psi''| \lesssim 1$ and
\bee
b^\frac 75 (-\psi') \gtrsim b^\frac 75 (R^{-\frac 3{10}})^4 \ge R^{-2} \gtrsim R^{-2} |\psi''|.
\eee
These two estimates imply \eqref{eqpsibrr1} and thus \eqref{eqpsibrr}. 

\mbox{}

Similar to Step 2, we apply the evolution equation \eqref{eqevouwt2} to compute
\[  \frac 12 \pa_t E_{b,\mu} = bs_c E_{b, \mu} + \sum_{j = 1}^4 I_j \]
 where 
 \bee
 I_1 &=&-b \Big\{ \Re\left(\tilde L_{+, b} \varrho^\mu \ut, [\varrho^\mu,\Lambda_0]\ut \right)_{L^2} + \Re\left(\tilde L_{-, b} \varrho^\mu \wt, [\varrho^\mu,\Lambda_0]\wt \right)_{L^2} \\
 && - \Re \left((\varrho^\mu \tilde N_b [\varrho^\mu, \Lambda_0] - [\Lambda_0, \varrho^\mu]\tilde N_b \varrho^\mu) \ut, \wt \right)_{L^2} \Big\} \\
 I_2 &=& -\frac b2 \Big\{ ([\tilde L_{+, b}, \Lambda_0]\varrho^\mu \ut, \varrho^\mu\ut)_{L^2}+   ([\tilde L_{-, b}, \Lambda_0]\varrho^\mu\wt, \varrho^\mu\wt)_{L^2} \\
 &&-2 \Re ([\tilde N_b, \Lambda_0]\varrho^\mu\ut, \varrho^\mu\wt)_{L^2} \Big\} \\
  I_3 &=&\Re \left( (L_{-, b} \varrho^\mu \tilde L_{+, b} \varrho^\mu - \varrho^\mu \tilde L_{-, b} \varrho^\mu L_{+, b} ) \ut, \wt  \right)_{L^2} \\
 &+& \Re \left( (-N_b \varrho^\mu \tilde L_{+,b} \varrho^\mu + \varrho^\mu \tilde N_b \varrho^\mu L_{+, b}) \ut, \ut  \right)_{L^2} \\
 &+& \Re \left( (N_b \varrho^\mu \tilde L_{-,b} \varrho^\mu - \varrho^\mu \tilde N_b \varrho^\mu L_{-, b}) \wt, \wt  \right)_{L^2} \\
 I_4 &=& \Re (\varrho^\mu \ut,  \tilde L_{+, b} \varrho^\mu f - \tilde N_b \varrho^\mu g)_{L^2} + \Re (\varrho^\mu \wt, \tilde L_{-, b} \varrho^\mu g - \tilde N_b \varrho^\mu f)_{L^2} 
\eee
Now we estimate each term. 

\mbox{}

\textit{$I_1$ term.} Note that 
$$[\Lambda_0, \varrho^\mu] = -\mu \varrho^\mu \left(1 - (1+r^4)^{-1}\right),$$
we have
\bee
 I_1 &=& -b\mu E_{b, \mu} + b\mu \Big\{ \Re\left(\tilde L_{+, b} \varrho^\mu \ut, (1+r^4)^{-1} \varrho^\mu\ut \right)_{L^2} \\ 
 &&+ \Re\left(\tilde L_{-, b} \varrho^\mu \wt, (1+r^4)^{-1} \varrho^\mu \wt \right)_{L^2} 
  - 2\Re \left(\tilde N_b \varrho^\mu   \ut, (1+r^4)^{-1} \varrho^\mu  \wt \right)_{L^2} \Big\}
\eee
Thus one easily obtains
\be
 \left| I_1 + b\mu E_{b, \mu}\right| \lesssim b\mu \left( \|\varrho^\mu \ut\|_{L^2_{\la x \ra^{-2}}}^2 + \|\varrho^\mu \ut\|_{\dot H^1}^2 + \|\varrho^\mu \wt\|_{L^2_{\la x \ra^{-2}}}^2 + \|\varrho^\mu \wt\|_{\dot H^1}^2  \right) \label{eqestI1}
\ee

\mbox{}

\textit{$I_2$ term.} Decompose
\bee
 I_2 &=& -\frac b2 \Big\{ (L_1 \varrho^\mu \ut, \varrho^\mu\ut)_{L^2}+   (L_2 \varrho^\mu\wt, \varrho^\mu\wt)_{L^2} \Big\} \\
 &-& \frac b2 \Big\{ ([ W_+ -\psi_b W_{+,b}, \Lambda_0]  \varrho^\mu \ut, \varrho^\mu\ut)_{L^2}+ ([W_- -\psi_b W_{-,b}, \Lambda_0] \varrho^\mu\wt, \varrho^\mu\wt)_{L^2} \\
 &&- 2 \Re ([\tilde N_b, \Lambda_0]\varrho^\mu\ut, \varrho^\mu\wt)_{L^2} \Big\} \\
 &-& \frac b2 \Big\{ (\tilde \psi_b \varrho^\mu \ut, \varrho^\mu\ut)_{L^2}+   (\tilde \psi_b  \varrho^\mu\wt, \varrho^\mu\wt)_{L^2} \Big\}  =: \sum_{k=1}^3 I_{2, k}
\eee

For $I_{2, 1}$, noting that \eqref{eqsolitondecay}-\eqref{eqdecayeigen} imply $\| (\rho^\mu - 1) F\|_{L^2_{\la x \ra^2}} \lesssim \mu$ for $F \in N_g(\calH_0)$, so
\eqref{eqaout}-\eqref{eqaowt} yield that
\bee
\left|(\varrho^\mu \ut, Q)_{L^2}\right| + \left|(\varrho^\mu \ut, Q_1)_{L^2}\right| + \left|(\varrho^\mu \ut, xQ)_{L^2}\right| \lesssim (|\l|^{-4} b^\frac 16 + |\mu| ) \| Y_0 \|_{L^2_{\la x \ra^{-2+|\mu|}}}, \\
\left|(\varrho^\mu \wt, Q_1)_{L^2}\right| + \left|(\varrho^\mu \wt, Q_2)_{L^2}\right| + \left|(\varrho^\mu \wt, \nabla Q)_{L^2}\right| \lesssim (|\l|^{-4} b^\frac 16 + |\mu| ) \| Y_0 \|_{L^2_{\la x \ra^{-2+|\mu|}}}.
\eee
When $|\mu| \ll 1$, this almost orthogonal condition and Proposition \ref{propspecprop} implies coercivity of $-I_{2,1}$. Also note that from \eqref{eqsolitondecay}, \eqref{eqWjbdiffest} and Proposition \ref{propQbasymp},
\bee
\sum_{\pm} \left| x\cdot\nabla(W_\pm - \psi_b W_{\pm, b}) \right| + \left|x\cdot\nabla \tilde N_b\right| \lesssim b^\frac 16 \la r \ra^{-4},
\eee 
so $I_{2, 2}$ can be absorbed into $I_{2, 1}$ to obtain
\bee
 I_{2,1} + I_{2,2} \lesssim -b\left( \|\varrho^\mu \ut\|_{L^2_{\la x \ra^{-\mu_d}}}^2 + \|\varrho^\mu \ut\|_{\dot H^1}^2 + \|\varrho^\mu \wt\|_{L^2_{\la x \ra^{-\mu_d}}}^2 + \|\varrho^\mu \wt\|_{\dot H^1}^2  \right).
\eee
Thanks to $\tilde \psi_b \ge 0$, we can combine the negativity of $I_{2, 3}$ to write
\be
I_2 \lesssim -b \left(  \|\varrho^\mu \ut\|_{L^2_{\la x \ra^{-\mu_d}}\cap \dot H^1 }^2  + \| \tilde \psi_b^\frac 12 \varrho^\mu \ut \|_{L^2}^2 + \|\varrho^\mu \wt\|_{L^2_{\la x \ra^{-\mu_d}}\cap \dot H^1 }^2  + \| \tilde \psi_b^\frac 12 \varrho^\mu \wt \|_{L^2}^2   \right)
\label{eqestI2}
\ee

\mbox{}

\textit{$I_3$ term.} We claim 
\be
|I_3| \lesssim b^\frac 75 \left(  \|\varrho^\mu \ut\|_{L^2_{\la x \ra^{-\mu_d}}\cap \dot H^1 }^2  + \| \tilde \psi_b^\frac 12 \varrho^\mu \ut \|_{L^2}^2 + \|\varrho^\mu \wt\|_{L^2_{\la x \ra^{-\mu_d}}\cap \dot H^1 }^2  + \| \tilde \psi_b^\frac 12 \varrho^\mu \wt \|_{L^2}^2   \right).
\label{eqestI3}
\ee
To begin with, compute
\bee
  && L_{-, b} \varrho^\mu \tilde L_{+, b} \varrho^\mu - \varrho^\mu \tilde L_{-, b} \varrho^\mu L_{+, b} \\
  &=& [-\Delta, \varrho^\mu] \tilde L_{+, b}\varrho^\mu - \varrho^\mu \tilde L_{-,b} [\varrho^\mu , -\Delta] + \varrho^\mu (L_{-, b} \tilde L_{+, b} - \tilde L_{-, b} L_{+, b}) \varrho^\mu \\
  &=&[-\Delta, \varrho^\mu] \varrho^\mu \tilde L_{+, b} - \tilde L_{-,b} \varrho^\mu [\varrho^\mu, -\Delta] \\
  &+& \varrho^\mu  \left\{ [-\Delta, \psi_b] + (-\Delta) \circ ((1-\psi_b) W_{+, b}) - ((1-\psi_b) W_{-, b}) (-\Delta) \right\}\varrho^\mu
\eee
Thus 
\begin{align}
 &\left|  \left( (L_{-, b} \varrho^\mu \tilde L_{+, b} \varrho^\mu - \varrho^\mu \tilde L_{-, b} \varrho^\mu L_{+, b} ) \ut, \wt  \right)_{L^2}  \right|  \nonumber \\
 \lesssim& \| \varrho^\mu [-\Delta, \varrho^\mu] \wt \|_{L^2}  \left( \| \ut \|_{\dot H^2} + \| \psi_b \ut\|_{L^2}  \right) +  \| \varrho^\mu [-\Delta, \varrho^\mu] \ut \|_{L^2}  \left( \| \wt \|_{\dot H^2} + \| \psi_b \wt\|_{L^2}  \right) \label{eqL-bL+bcomterm1} \\
 +& \| |\pa_r^2 \psi_b|^\frac 12 \varrho^\mu \ut\|_{L^2} \| |\pa_r^2 \psi_b|^\frac 12 \varrho^\mu \wt\|_{L^2} + \| |r^{-1}\pa_r \psi_b|^\frac 12 \varrho^\mu \ut\|_{L^2}\| |r^{-1}\pa_r \psi_b|^\frac 12 \varrho^\mu \wt\|_{L^2} \label{eqL-bL+bcomterm2} \\
 +& \| (\pa_r \psi_b) \varrho^\mu \wt \|_{L^2} \| \nabla (\varrho^\mu \ut) \|_{L^2} \label{eqL-bL+bcomterm3}\\
 +&  \| \nabla (\varrho^\mu \wt) \|_{L^2} \| \nabla ((1-\psi_b)W_{+,b} \varrho^\mu \ut) \|_{L^2} +  \| \nabla (\varrho^\mu \ut) \|_{L^2} \| \nabla ((1-\psi_b)W_{-,b} \varrho^\mu \wt) \|_{L^2} \label{eqL-bL+bcomterm4} 
\end{align}

Now we check \eqref{eqL-bL+bcomterm1}-\eqref{eqL-bL+bcomterm4} can be bounded by RHS of \eqref{eqestI3}. For \eqref{eqL-bL+bcomterm1}, we notice that $ \| \varrho^\mu [-\Delta, \varrho^\mu] \wt \|_{L^2} \lesssim  |\mu| \| \varrho^\mu \wt\|_{\dot H^1 \cap L^2_{\la x \ra^{-\mu_d}}}$, $\| \psi_b \ut \|_{L^2} \lesssim R^{\mu_d - \mu} \| \varrho^{\mu} \ut \|_{L^2_{\la x \ra^{-\mu_d}}}$ and that from Lemma \ref{lemapriorieigen} (1), \eqref{eqdefuwt2}, \eqref{eqgammadeltabdd} and \eqref{equwutwtequiv} that
\[ 
\begin{split}  \| \wt \|_{\dot H^2} &\lesssim b^{-1-d} \| \wt \|_{\dot H^1_{\la x \ra^{-1}} \cap L^2_{\la x \ra^{-2}}} + \left( \| \wt \|_{L^2_{\la x \ra^{-2}}} +  \| \ut \|_{L^2_{\la x \ra^{-2}}} \right) (\| (\tilde Q_b)_1 \|_{\dot H^2} + \| \tilde Q_b\|_{\dot H^2} )    \\
&\lesssim b^{-1-d} \left(   \|\varrho^\mu \ut\|_{L^2_{\la x \ra^{-\mu_d}}\cap \dot H^1 } +  \|\varrho^\mu \wt\|_{L^2_{\la x \ra^{-\mu_d}}\cap \dot H^1 } \right)
\end{split}
\]
where $\| (\tilde Q_b)_1 \|_{\dot H^2} + \| \tilde Q_b\|_{\dot H^2}  \lesssim 1$ follows from Proposition \ref{propQbasymp}. Hence \eqref{eqL-bL+bcomterm1} is controlled by RHS of \eqref{eqestI3} thanks to $|\mu| (b^{-1-d} + R^{\mu_d -\mu}) \lesssim b^\frac 75$. For \eqref{eqL-bL+bcomterm2}-\eqref{eqL-bL+bcomterm3}, we apply \eqref{eqpsibrr}, $|r^{-1} \pa_r \psi_b|^\frac 12 + |\pa_r \psi_b| \lesssim  R^{-1} \tilde \psi_b^\frac 12$ and $R^{-1} = b^2$. Lastly, for \eqref{eqL-bL+bcomterm4}, we exploit the smallness of $|W_{\pm, b}| + |\pa_r W_{\pm, b}| \lesssim b^2 r^{-2}$ for $r \ge b^{-2}$ from Proposition \ref{propQbasymp}. 

%
%
%

Similarly, we compute
\bee
   && -N_b \varrho^\mu \tilde L_{\pm,b} \varrho^\mu + \varrho^\mu \tilde N_b \varrho_\mu L_{\pm, b} \\
  &=& \varrho^\mu \tilde N_b [ \varrho^\mu, L_{\pm, b}] + \varrho^\mu (-N_b \tilde L_{\pm,b} - \tilde N_b L_{\pm, b}) \varrho^\mu \\
  &=& \varrho^\mu \tilde N_b (\Delta \varrho^\mu + 2\nabla \varrho^\mu \cdot \nabla) - \varrho^\mu (1-\psi_b) N_b (-\Delta) \circ \varrho^\mu
\eee
and estimate
\bee
  \left| \left( (-N_b \varrho^\mu \tilde L_{+,b} \varrho^\mu + \varrho^\mu \tilde N_b \varrho^\mu L_{+, b}) \ut, \ut  \right)_{L^2}  \right| + \left| \left( (N_b \varrho^\mu \tilde L_{-,b} \varrho^\mu - \varrho^\mu \tilde N_b \varrho^\mu L_{-, b}) \wt, \wt  \right)_{L^2}  \right| \\
  \lesssim \left(|\mu| + b^2 \right)\left( \| \varrho^\mu \ut\|_{L^2_{\la x \ra^{-2}} \cap \dot H^1}^2 +  \| \varrho^\mu \wt\|_{L^2_{\la x \ra^{-2}} \cap \dot H^1}^2 \right).
\eee
So we have verified \eqref{eqestI3}. 

\mbox{}

\textit{$I_4$ term.} Since the support of $f, g$ are within $B_{8b^{-1}} \subset B_R$, so we have 
\bee
I_4 &=&  \Re (\varrho^\mu \ut,   L_{+, b} \varrho^\mu f -  N_b \varrho^\mu g)_{L^2} + \Re (\varrho^\mu \wt,  L_{-, b} \varrho^\mu g - N_b \varrho^\mu f)_{L^2} \\
&=& \Re (\varrho^\mu \ut,  [-\Delta,\varrho^\mu] f)_{L^2} + \Re (\varrho^\mu \wt, [-\Delta,\varrho^\mu] g )_{L^2} \\
&+&  \Re (\varrho^{2\mu} \ut,   L_{+, b}  f -  N_b g)_{L^2} + \Re (\varrho^{2\mu} \wt,  L_{-, b}  g - N_b f)_{L^2}
\eee
Hence using \eqref{eqresfgest} and \eqref{eqbddfg}, we have
\be|I_4| \lesssim (|\mu| + b^2) \left( \| \varrho^\mu \ut \|_{L^2_{\la x \ra^{-2+|\mu|}}}^2 +  \| \varrho^\mu \wt \|_{L^2_{\la x \ra^{-2+|\mu|}}}^2  \right).   
\label{eqestI4}
\ee

\mbox{}

Finally, combine \eqref{eqestI1}, \eqref{eqestI2}, \eqref{eqestI3} and \eqref{eqestI4}, we obtain
\bee
&&  \frac 12 e^{-2b(\mu - s_c) t} \pa_t  ( e^{2b(\mu - s_c) t} E_{b, \mu}) \\
  &\lesssim & -b \left(  \|\varrho^\mu \ut\|_{L^2_{\la x \ra^{-\mu_d}}\cap \dot H^1 }^2  + \| \tilde \psi_b^\frac 12 \varrho^\mu \ut \|_{L^2}^2 + \|\varrho^\mu \wt\|_{L^2_{\la x \ra^{-\mu_d}}\cap \dot H^1 }^2  + \| \tilde \psi_b^\frac 12 \varrho^\mu \wt \|_{L^2}^2   \right)
\eee
But with \eqref{eqexplicitXt2}, we know $E_{b, \mu}(t) = e^{-2\Im \l t}E_{b, \mu}(0)$, so that the left hand side is $0$. That implies $\ut = \wt = 0$ and by \eqref{equwutwtequiv} we conclude the rigidity $Y_0 = 0$. 

\end{proof}




\appendix

\section{Proof of scalar spectral properties}\label{appA1}

\begin{proof}[Proof of Proposition \ref{propspecprop}]
For (1), the coercivity under \eqref{eqortho1} is exactly \cite[Lemma 2.2]{chang2008spectra} (also see \cite{MR0783974}). With orthogonal conditions \eqref{eqortho2}, we proceed as \cite[Lemma 2.3]{MR2150386}. Let $u, w$ satisfy \eqref{eqortho2} and define the auxiliary functions
\[ \ut = u - \a Q_1,\quad \wt = w - \beta Q  \]
with 
\[ \a = \frac{(u, |x|^2 Q)_{L^2}}{(Q_1, |x|^2 Q)_{L^2}} = -  \frac{(u, |x|^2 Q)_{L^2}}{\| xQ\|_{L^2}^2},\quad \beta = \frac{(u, \rho)_{L^2}}{(Q, \rho)_{L^2}} = \frac{2(u, \rho)_{L^2}}{\| xQ \|_{L^2}^2}. \]
Then $u, w$ satisfy \eqref{eqortho1} since $Q_1 \perp_{L^2} Q, xQ$, and then 
\be
(L_+ u, u)_{L^2} = (L_+ \ut, \ut) \gtrsim \|\ut \|_{H^1}^2,\quad (L_- w, w)_{L^2}= (L_- \wt, \wt) \gtrsim \|\wt \|_{H^1}^2. \label{equwuwuw}
\ee
where the first equivalences follow $L_+ Q_1 = -2Q$, $L_- Q = 0$ and $u \perp_{L^2} Q$.

Moreover, from $(u, Q_1)_{L^2} = 0$ and $(w, Q_2)_{L^2} = 0$, we also have
\[ \a = -\frac{(\ut, Q_1)_{L^2}}{\| Q_1\|_{L^2}^2},\quad \beta = - \frac{(\wt, Q_2)_{L^2}}{(Q, Q_2)_{L^2}} = \frac{(\wt, Q_2)_{L^2}}{\| Q_1 \|_{L^2}^2} \]
so $\| u \|_{H^1} \sim \| \ut \|_{H^1}$. This combined with \eqref{equwuwuw} yields \eqref{eqcoerLpm}. 

\mbox{}

For (2), the conclusion in \cite[Theorem 1.1]{MR2150386} (also see earlier works \cite{MR2252148,MR3841347}) can be written as for $u, w$ satisfying \eqref{eqortho2}, we have
\[ (L_1u, u) \gtrsim \left| \begin{array}{ll}
   \| u \|_{\dot H^1(\RR^d)}^2 + \| e^{-\frac{19}{20}|x|} u   \|_{L^2(\RR^d)}^2  &  d = 1 \\
    \| u \|_{\dot H^1(\RR^d)}^2 + \| e^{-\frac 12|x|} u   \|_{L^2(\RR^d)} &  2 \le d \le 10
\end{array}\right. \]
and similarly for $(L_2 w, w)$. The stronger norm in \eqref{eqcoerL12} comes from Hardy inequality. The case $d \ge 3$ follows from the standard one. The $d=1$ case uses (see for example \cite{kufner1990hardy})
\bea
  \int_a^b |u'(x)|^2 dx \gtrsim \int_a^b \frac{|u(x)|^2}{(\min\{x-a, b-x\})^2} dx,\quad u \in C^\infty_c([a, b])
\eea
where the constant is independent of $a, b$, 
which yields
\be \| u' \|_{L^2(\RR)}^2 + \| u \|_{L^2([-1, 1])}^2 \gtrsim \| \la x \ra^{-1} u\|_{L^2(\RR)}^2 \label{eq1DHardy}\ee
 by taking $a = 0$ and let $b \to \pm\infty$.
For $d=2$, we claim the non-sharp Hardy inequality 
\be \| r^{-\a} u \|_{L^2(\RR^2 - B_R^{\RR^2})} \lesssim_{R, \a} \| u \|_{\dot H^1(\RR^2 - B_R^{\RR^2})},\quad \forall R>0, \a > 1 \label{eq2DHardy}
\ee
which implies $\mu_2 = \frac{11}{10}$. Indeed,
for $u \in C^\infty_c (\RR^2 - B_R^{\RR^2})$, consider $S_\a u := \frac{x}{|x|^{1+\a}} u$. So for any $k \in \RR$, 
\bee
0 \le \int_{|x| \ge R} |S_\a u + k \nabla u |^2 dx = \int_{|x| \ge R} |S_\a u|^2 + 2k \int_{|x| \ge R} \nabla u \cdot \S_\a u + k^2 \int_{|x| \ge R} |\nabla u|^2.
\eee
Compute
\bee
\int_{|x| \ge R} \nabla u \cdot \S_\a u  &=& \int_{|x| \ge R} \frac{x\cdot \nabla |u|^2}{2|x|^{1+\a}} = \frac{\a-1}2 \int_{|x| \ge R} \frac{|u|^2}{|x|^{1+\a}} \\
&\ge&  \frac{\a-1}2 \cdot R^{-(\a - 1)}\int_{|x| \ge R} \frac{|u|^2}{|x|^{2\a}}.
\eee
Plug into the first inequality, we obtain
\[ k^2 \int_{|x| \ge R} |\nabla u |^2 \ge\left( -k (\a-1)R^{-(\a - 1)} - 1\right) \int_{|x| \ge R} \frac{|u|^2}{|x|^{2\a}}. \]
So with $\a > 1$ and $R > 0$, \eqref{eq2DHardy} follows by taking $k \ll -1$.

\mbox{}

Next, the improvement of $\mu_2$ with vanishing average condition on almost every sphere comes from spherical harmonic decomposition, which is Fourier transform in 2D (see for example \cite[Chap. IV, Sec. 1]{MR0304972})
\[ \dot H^1(\RR^2) = \bigoplus_{l \ge 0} \dot H^1_l, \quad u(re^{i\theta}) = \sum_{l \ge 0} u_l(r) e^{ il \theta}.\]
Notice that the vanishing average ensures $u_1 = 0$, resulting in
\[ \| u \|_{\dot H^1}^2 = \sum_{l \ge 1} \| u_l(r) e^{ i l\theta} \|_{\dot H^1}^2 = \sum_{l \ge 1}\left( \| u_l \|_{\dot H^1}^2 + l^2 \| r^{-1} u_l \|_{L^2}^2 \right) \ge \| r^{-1} u \|_{L^2}^2.   \]

\mbox{}

Finally, we consider the relaxation of orthogonality condition \eqref{eqortho2}. We only prove \eqref{eqcoerL12} for $u$ since the treatments for $w$ and for \eqref{eqcoerLpm} are almost the same. 

Consider $\ut = u - \a Q - \beta Q_1 - \sum_i \gamma_i x_i Q$ such that $\ut \perp_{L^2} Q, Q_1, xQ$. Then since $Q, Q_1, xQ$ are mutually orthogonal and belong to $L^2_{\la x \ra^{\mu_d}}$, we have $|\a|+ |\beta| + \sum_i |\gamma_i| \lesssim \delta_d \| u \|_{L^2_{\la x \ra^{-\mu_d}}}$. Also from $L_1 Q, L_1 Q_1, L_1 xQ \in L^2_{\la x \ra^{\mu_d}}$, we have $|(L_1 u, u)_{L^2} - (L_1 \ut, \ut)_{L^2}| \lesssim \delta_d  \| u \|_{L^2_{\la x \ra^{-\mu_d}}}$. So the coercivity of \eqref{eqcoerL12} still holds when $\delta_d \ll 1$ depending on the original constant of \eqref{eqcoerL12}.  
\end{proof}

\section{A priori estimates for eigenvalues and eigenfunctions of $\calH_b$} 
\label{appB1}

We first introduce the notations for components of $\calH_b$, 
\be
 \calH_{b, free} = \left( \begin{array}{cc} \Delta_b -1 &  \\ &  -\Delta_{-b} +1\end{array} \right),\quad \calV = \left( \begin{array}{cc} W_{1, b} & W_{2, b}  \\ -\overline{W_{2, b}} & -W_{1, b}   \end{array} \right),
\ee
the resolvent of $\Delta_b$
\be R_b^\pm(z) = \pm i\int_0^\infty e^{\pm it \Delta_b} e^{\pm itz} dt,\quad {\rm for}\,\, \mp \Im z < |b|, \label{eqdefRbpmz} \ee
  as a bounded operator from $L^{p'} \to L^{p}$ for some $p \ge 2$ \cite[Lemma 2.4]{li2023stability}, and thereafter the matrix resolvent of $\calH_{b, free}$
  \be \calS_b^-(\l):=- i\int_0^{\infty}e^{ it\calH_{b, free}} e^{- it\l}dt = \left( \begin{array}{ll} -R_b^+(-1-\l) &  \\ & R_{-b}^-(-1+\l)  \end{array} \right). \ee

To begin with, we show the a priori estimate of eigenvalues.

\begin{lemma}[A priori bound of eigenvalue] \label{lemapriorieigenvalue}
For $d \ge 1$, when $s_c \ll 1$ and $\sigma \in (s_c, s_c + b)$, we have
\be 
\sigma_{dist}\left(\calH_b\big|_{(\dot H^\sigma)^2}\right) \cap \left\{ z \in \CC: \Im z < b(\sigma -s_c) \right\}\subset \left\{ z \in \CC: \Im z \ge -M, |\Re z| \le M b^{-d-1}  \right\}.\label{eqapriorieigenvalue}
\ee
for some $M = M(d) \ge 1$ independent of $s_c$. 
\end{lemma}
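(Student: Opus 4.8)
The plan is to assume $\l$ is an eigenvalue of $\calH_b\big|_{(\dot H^\sigma)^2}$ with eigenfunction $Y_0\in(\dot H^\sigma)^2\setminus\{0\}$ satisfying $\Im\l<b(\sigma-s_c)$, and to obtain the two one‑sided bounds separately: the lower bound on $\Im\l$ from an $L^2$ energy identity, and the bound on $|\Re\l|$ from a Birman--Schwinger-type resolvent estimate. First I would set up the resolvent formulation. Writing $\calH_b=\calH_{b,free}+\calV-ibs_c$ as in Appendix \ref{appB1} and putting $z:=\l+ibs_c$, the eigenequation reads $(\calH_{b,free}-z)Y_0=-\calV Y_0$. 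Since $\Im z=\Im\l+bs_c<b\sigma<b$ stays strictly below $\sigma(\calH_{b,free}\big|_{(\dot H^\sigma)^2})=\RR+ib\sigma$ (which follows from the essential spectrum of $\calH_b$ recalled in the introduction, $\calV$ being relatively compact) and below the convergence threshold of $\calS_b^-$, we get $Y_0=-\calS_b^-(z)\calV Y_0$ with $\calS_b^-(z)=(\calH_{b,free}-z)^{-1}$ bounded on $(\dot H^\sigma)^2$ and, by \cite[Lemma 2.4]{li2023stability}, bounded $L^{p'}\to L^p$ for an admissible $p\ge 2$. As $|\calV|\lesssim|Q_b|^{p-1}$ is bounded and decaying (Proposition \ref{propQbasymp}) and $Y_0\in\dot H^\sigma\hookrightarrow L^{2d/(d-2\sigma)}$, Hölder's inequality gives $\calV Y_0\in L^{p'}\cap L^2$, so this representation upgrades $Y_0$ to $L^p$ (via the $L^{p'}\to L^p$ bound) and, when $\Im z<0$, to $L^2$ (via the $L^2$-resolvent).

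For the bound $\Im\l\ge -M$: if $\Im\l\ge-bs_c$ it is trivial, so assume $\Im\l<-bs_c$, i.e. $\Im z<0$. The operators $\Delta_{\pm b}=\Delta\pm ib\Lambda_0$ are self-adjoint on $L^2$ (since $\Lambda_0$ is skew-adjoint, $i\Delta_{\pm b}$ generates a unitary group --- a dilation composed with a time-rescaled Schrödinger flow), hence $\calH_{b,free}$ is self-adjoint on $(L^2)^2$ and $(\calH_{b,free}-z)^{-1}$ is $L^2$-bounded; with $\calV Y_0\in L^2$ this yields $Y_0\in(L^2)^2$. Pairing $(\calH_b-\l)Y_0=0$ with $Y_0$ in $L^2$ and taking imaginary parts, the self-adjoint part of $\calH_{b,free}$ drops out, and using $\calH_b^*=\sigma_3(\calH_b+2ibs_c)\sigma_3$ --- so that the only anti-Hermitian contributions to $\calH_b$ are the scalar $-ibs_c$ and the off-diagonal $W_{2,b}$-block --- one finds $\Im\langle\calH_b Y_0,Y_0\rangle_{L^2}=-bs_c\|Y_0\|_{L^2}^2+2\,\Im\langle W_{2,b}Y_0^{(2)},Y_0^{(1)}\rangle_{L^2}=\Im\l\,\|Y_0\|_{L^2}^2$. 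Hence $|\Im\l+bs_c|\le\|W_{2,b}\|_{L^\infty}\le C_0(d)$, because $\|W_{2,b}\|_{L^\infty}\lesssim\|Q_b\|_{L^\infty}^{p-1}=O(1)$ uniformly in $b$. Therefore $\Im\l\ge-bs_c-C_0(d)\ge-M$.

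For the bound $|\Re\l|\le Mb^{-d-1}$: from $Y_0=-\calS_b^-(z)\calV Y_0\in L^p$ with $Y_0\neq 0$ in $L^p$, one reads off $\|\calS_b^-(z)\|_{L^{p'}\to L^p}\,\|\calV\|_{L^q}\ge 1$ where $\tfrac1q=1-\tfrac2p$, and $\|\calV\|_{L^q}=\||Q_b|^{p-1}\|_{L^q}$ is controlled by a fixed power of $b^{-1}$ using Proposition \ref{propQbasymp} and the exact identity $(\tfrac d2-s_c)(p-1)=2$. The decisive input is then the quantitative form of the resolvent bound: tracking the $b$-dependence through the dispersive/Strichartz estimates for $e^{it\Delta_{\pm b}}$ underlying \cite[Lemma 2.4]{li2023stability}, one should get $\|\calS_b^-(z)\|_{L^{p'}\to L^p}\lesssim b^{-\kappa}(1+|\Re\l|)^{-\varepsilon}$ for constants $\kappa,\varepsilon>0$ depending only on $d$, uniformly in the admissible range of $\Im\l$, the gain in $|\Re\l|$ being a Kenig--Ruiz--Sogge-type uniform Sobolev estimate. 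Combining, $(1+|\Re\l|)^\varepsilon\lesssim b^{-\kappa-\kappa'}$, and a routine optimization of the admissible Lebesgue exponents against the decay of $Q_b$ gives the stated exponent $d+1$. (One could also try to argue from $\l Y_0=\calH_b Y_0$ by testing against $\langle x\rangle^{-4}Y_0$ and invoking a $\dot H^2$-smoothing estimate for $Y_0$ as in Lemma \ref{lemapriorieigen}(1), at the cost of an extra weighted elliptic estimate for $\|Y_0\|_{\dot H^1_{\langle x\rangle^{-1}}}$; the appearance of the same power $b^{-1-d}$ suggests these two routes are essentially equivalent.)

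The main obstacle is precisely this quantitative resolvent estimate: one needs the $L^{p'}\to L^p$ operator norm of $(\Delta_{\pm b}-w)^{-1}$ to decay in $|\Re w|$ with only a polynomial-in-$b^{-1}$ loss, uniformly for $\Im w$ below a threshold of size $O(b)$. This is a uniform-resolvent estimate for a non-self-adjoint, first-order perturbation $\Delta\pm ib\Lambda_0$ of the Laplacian involving the unbounded-coefficient dilation generator $\Lambda_0$; absorbing $\Lambda_0$ into the oscillatory-integral analysis and bookkeeping the exponents so as to land on the stated $d+1$ is where the work concentrates. The remaining ingredients --- the location of the essential spectrum, the $L^2$ self-adjointness of $\calH_{b,free}$, the Hölder/Sobolev embeddings, and the energy identity --- are routine.
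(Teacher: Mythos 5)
Your argument for the lower bound $\Im\l\ge -M$ is correct and in fact slightly more direct than the paper's: you upgrade $Y_0$ to $(L^2)^2$ when $\Im z<0$ via the $L^2$-bounded resolvent applied to $\calV Y_0\in L^2$, and then take the imaginary part of the $L^2$ pairing of the eigenequation, exploiting the $L^2$ self-adjointness of $\calH_{b,free}$. The paper instead feeds the spectral estimate $\|\calS_b^-(z)\|_{L^2\to L^2}\le|\Im z|^{-1}$ into a Birman--Schwinger formulation $(I+\calV_2\calS_b^-(z)\calV_1)X_0=0$, using a factorization $\calV=\calV_1\calV_2$ with $|\calV_j|\lesssim|Q_b|^{(p-1)/2}$ that keeps everything $L^2\to L^2$; both routes give the same conclusion with comparable effort.

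The bound $|\Re\l|\le Mb^{-d-1}$ is where your proposal has a genuine gap, and you flag it yourself. You reduce the claim to a uniform Sobolev-type estimate $\|\calS_b^-(z)\|_{L^{p'}\to L^p}\lesssim b^{-\kappa}(1+|\Re\l|)^{-\varepsilon}$ and assert that ``a routine optimization of the admissible Lebesgue exponents'' gives the exponent $d+1$, but you prove neither the decay in $|\Re\l|$ nor the $b$-bookkeeping, and that is precisely the content of the paper's proof. Concretely, the paper writes the scalar resolvent $R_b^+(-E)$ as a time integral, splits $[0,\infty)$ at a small $a$ depending only on $d$; on $[0,a]$ a dispersive bound for $e^{it\Delta_b}$ yields an $O(a^{1/2})$ contribution; on $[a,\infty)$ it proves two operator bounds, an $L^{p_2}\to L^{p_2'}$ bound $\lesssim_a b^{-3/4}$ from dispersion and an $L^1\to L^\infty$ kernel bound $\lesssim_a(|E|^{-1}+e^{-(p-1)/b})b^{d/2-1}$ obtained by integrating by parts against $e^{-itE}$ in the explicit integral kernel of $e^{it\Delta_b}$; and then Riesz--Thorin interpolation is applied. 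The exponent $d+1$ is not ``routine'': it is exactly tuned so that the $b$-powers produced by the interpolation and by $|E|\gtrsim b^{-d-1}$ cancel, leaving $M^{-1/(2d+1)}+o_{b\to 0}(1)<\tfrac12$. The $|E|^{-1}$ gain from this oscillatory-integral step has no off-the-shelf substitute for $\Delta_b=\Delta+ib\Lambda_0$, whose unbounded first-order coefficient $x\cdot\nabla$ places it outside the constant-coefficient Kenig--Ruiz--Sogge framework you invoke; the paper has to build the estimate from the explicit kernel, and without that input your argument does not close.
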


\begin{proof}[Proof of Lemma \ref{lemapriorieigenvalue}]
This follows a quantitative version of \cite[Lemma 3.2]{li2023stability}, namely the decay of resolvent when $|\l| \gg 1$. As preparation, 
we decompose the matrix potential as $\calV = \calV_1 \calV_2$ with
\be
|\calV_j| \lesssim  |Q_b|^{\frac{p-1}{2}} \lesssim  \left| \begin{array}{ll}
  e^{-\frac{p-1}{4}r} & r \le 4b^{-1}\\
  e^{-\frac{p-1}{b}} (br)^{-1}  & r \ge 4b^{-1}
\end{array}\right. 
\quad {\rm for} \,\,j = 1, 2, \label{eqcalVjbdd}
\ee 
where the constant is independent of $0 < s_c \ll 1$ using Proposition \ref{propQbasymp}. 

Suppose $\calH_b Y_0 = \l Y_0$ with $Y_0 \in \dot H^\sigma$ for $\sigma \le s_c + b^2 \ll 1$. Apply the resolvent $\calS_b^{-}(\l + ibs_c)$ to the eigen equation yields
\be  Y_0= - \calS_b^-(\l + ibs_c) \calV Y_0.  \label{eqeigeneqY01} \ee
Let $X_0 = \calV_2 Y_0 \in L^2$ by $\sigma \le s_c + b^2 \ll 1$ and Sobolev embedding. Then \eqref{eqeigeneqY01} implies
\be (I + \calV_2 \calS_b^- (\l + ibs_c) \calV_1) X_0 = 0, \label{eqeigeneqX01} \ee
and we will show that outside the range of RHS of \eqref{eqapriorieigenvalue} for some $M \gg 1$, we have 
\be \| \calV_2 \calS_b^-(\l + ibs_c) \calV_1 \|_{L^2 \to L^2} \le \frac 12, \label{eqV2SbV1small} \ee
so that the linear operator is invertible in $L^2$ and thus $X_0 = \vec 0$, contradicting to $Y_0 \neq \vec 0$.

Since $\Delta_b$ is self-adjoint in $L^2$, the spectral estimate $ \| \calS_b^-(\l + ibs_c)\|_{L^2 \to L^2} \le (\Im \l + bs_c)^{-1}$ immediately implies \eqref{eqV2SbV1small} when $\Im \l \ll -1$. 

Finally, we consider 
\be  \Im \l \le b^2 ,\quad |\Re \l| \ge M b^{-d-1}. \label{eqRelgg1} \ee
It suffices to estimate the scalar operator $V_2 R_b^+(-E) V_1$ with $E = 1 + \l + ibs_c$ and $V_j$ as some scalar components of $\calV_j$ satisfying \eqref{eqcalVjbdd}. We will partition the integral of \eqref{eqdefRbpmz} into three regions $[0, a]$ and $[a, \infty)$ to estimate, with $a \ll 1$ independent of $b$.

(a) On $[0, a]$. Take $p_1 = \left| \begin{array}{ll} \frac{2d}{d-1} & d \ge 2, \\ \infty & d = 1. \end{array} \right.$
From the decay of $V_1, V_2$, the dispersive estimate \cite[Proposition 2.2]{li2023stability} 
\be \| e^{it\Delta_b}\|_{L^{q'} \to L^{q}} \lesssim \left| \begin{array}{ll} |t|^{-\left(\frac d2 - \frac dq\right)} & |t| \le |b|^{-1} \\ 
		(|b|^{-1} e^{bt})^{- \left( \frac d2 - \frac dq \right)}  & |t| > |b|^{-1}
	\end{array}  \right.\quad {\rm for} \,\, q \in [2, \infty], \label{eqDeltabdispersive} \ee
 and $\Im E \le b^2 + bs_c \le 2b^2$, we compute
\bea
   \left\| V_1 \int_0^a ie^{it\Delta_b} e^{-itE} dt V_2 \right\|_{L^2 \to L^2} \lesssim \int_0^a \| e^{it\Delta_b}\|_{L^{p_1'} \to L^{p_1}} e^{t\Im E}dt \lesssim a^\frac 12 \ll 1
  \label{eqaintest}
\eea

(b) On $[a, \infty)$. Let $p_2 = \frac{4d}{2d-1}$. We first claim 
\bea
 \left\| V_1 \int_a^\infty ie^{it\Delta_b} e^{-itE} dt V_2 \right\|_{L^{p_2} \to L^{p_2'}} &\lesssim_a & b^{-\frac 34} \label{eqV1semiV2est1}\\
 \left\| V_1 \int_a^\infty ie^{it\Delta_b} e^{-itE} dt V_2 \right\|_{L^1 \to L^\infty} &\lesssim_a &  \left(|E|^{-1} + e^{-\frac{p-1}{b}}\right) b^{\frac d2-1} \label{eqV1semiV2est2} 
\eea
Thereafter, Riesz-Thorin interpolation implies 
\be \begin{split}
   \left\| V_1 \int_a^\infty ie^{it\Delta_b} e^{-itE} dt V_2 \right\|_{L^2 \to L^2} &\lesssim_a \left(|E|^{-\frac{1}{2d+1}} + e^{-\frac{p-1}{(2d+1)b}} \right) b^{\frac{\frac d2 - 1}{2d+1}-\frac 34\cdot\frac{2d}{2d+1}} \\
   &\le M^{-\frac{1}{2d+1}} + o_{b \to 0}(1). 
   \end{split}\label{eqaextest}
\ee

Indeed, the first estimate \eqref{eqV1semiV2est1} comes from that $V_1, V_2$ are bounded from $L^{p_2} \to L^{p_2'}$ by their decay \eqref{eqcalVjbdd}, the above dispersive estimate \eqref{eqDeltabdispersive} with $q = p_2$, and that $\Im E \le 2 b^2 \ll \frac b4$. For the second estimate, it suffices to show the $L^\infty_{x, y}$ estimate of its integration kernel
\[ K(x, y) = V_1(x) V_2(y) \int_a^\infty \left( \frac{ e^{bt} - e^{-bt} }{2b} \right)^{-\frac d2} e^{ib\frac{|xe^{-bt} - y|^2}{2(1-e^{-2bt})} } e^{-itE} dt, \]
where we exploited the integral kernel of $e^{it\Delta_b}$ from \cite[(2.3)]{li2023stability}. With the integrand bounded by $C b^\frac d2 e^{(-\frac d2 + \frac{\Im E}{b}) bt}$ in absolute value, 
a direct integration of $t \in [a, \infty)$ yields 
\be |K(x, y)| \lesssim_a V_1(x) V_2(y) b^{\frac d2 - 1}. \label{eqKLinfest1}\ee
Next, we compute  
\[\left|\pa_t \left( \left( \frac{ e^{bt} - e^{-bt} }{2b} \right)^{-\frac d2} e^{ib\frac{|xe^{-bt} - y|^2}{2(1-e^{-2bt})} }\right) \right| \lesssim_a b^{\frac d2} e^{-\frac d2 bt}\left( b + |x|^2 + |y|^2 \right),\quad {\rm for}\,\,t \ge a, \]
and thus an integration by parts with $e^{itE}dt = \frac{d(e^{itE})}{iE}$ leads to 
\be |K(x, y)| \lesssim_a V_1(x) V_2(y) \cdot b^{\frac d2-1} |E|^{-1}(b + |x|^2 + |y|^2) . \label{eqKLinfest2}\ee
Combined with the decay \eqref{eqcalVjbdd}, we can apply \eqref{eqKLinfest2} when $|x| \ge 4b^{-1}$ or $|y| \ge 4b^{-1}$ and \eqref{eqKLinfest1} otherwise to obtain $\| K \|_{L^\infty_{x, y}} \lesssim_a  \left(|E|^{-1} + e^{-\frac{p-1}{b}}\right) b^{\frac d2-1}$, which leads to \eqref{eqV1semiV2est2}.

Combining \eqref{eqaintest} and \eqref{eqaextest} with $a \ll 1$ determined by \eqref{eqaintest} and thereafter $M \gg 1$, we derive $\| V_2 R_b^+(-E) V_1 \|_{L^2 \to L^2} \ll 1$ under the condition \eqref{eqRelgg1}. And that concludes the a priori bound of eigenvalue \eqref{eqapriorieigenvalue}. 
\end{proof}

\mbox{}

Next, we need the following lemma showing that the resolvent $R_b^\pm(z)$ can trade decay into smoothness. 


\begin{lemma}[Smoothing of resolvent]\label{lemsmoothres}
    Let $d \ge 1$, $b > 0$, $\Im z > -b\min \{ 1, \frac d2\}$. For  $\delta \in [0, 1]$ and $\frac{\Im z}{b} +\delta >0$,
     \be
    \| R_{\pm b}^\pm( \pm z) f \|_{\dot H^{ \delta}} \lesssim \left( b^{-1} + ( \Im z + b\delta)^{-1}\right) \la \Re z\ra^{\max \{\frac{2\delta - 1}{4}, 0 \}} \| f \|_{L^2_{\la x \ra}}.\label{eqressmooth1}
    \ee
     where the constant is independent of $b, z, \delta$. 
\end{lemma}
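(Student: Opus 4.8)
The plan is to reduce to the free Schr\"odinger propagator by a self-similar conjugation identity for $e^{it\Delta_b}$, and then to split the resolvent integral \eqref{eqdefRbpmz} at the time scale $t\sim b^{-1}$ into a Schr\"odinger-like short-time part and a long-time part in which the dilation generated by $\Lambda_0$ produces genuine contraction. First, taking the complex conjugate of \eqref{eqdefRbpmz} and using $\overline{\Delta_b}=\Delta_{-b}$ (as $\Delta,\Lambda_0$ are real operators) gives $\overline{R_b^+(z)f}=-R_{-b}^-(\bar z)\bar f$; since $f\mapsto\bar f$ preserves the $L^2_{\langle x\rangle}$ and $\dot H^\delta$ norms while $\langle\Re z\rangle$ and $\Im z$ are unchanged, it suffices to treat $R_b^+(z)$ with $b>0$, $\Im z>-b\min\{1,d/2\}$, and (by density) $f\in C^\infty_c$.

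The structural input is that, with $D_\lambda g(x):=\lambda^{-d/2}g(x/\lambda)$, differentiating in $t$ (or reading off the kernel \cite[(2.3)]{li2023stability}) shows
\[ e^{it\Delta_b}=D_{e^{bt}}\circ e^{is(t)\Delta},\qquad s(t):=\frac{1-e^{-2bt}}{2b}, \]
consistent with \eqref{eqDeltabdispersive} since $e^{bt}s(t)=b^{-1}\sinh(bt)$. Under the substitution $s=s(t)$ one has $dt=(1-2bs)^{-1}ds$, $e^{itz}=(1-2bs)^{-iz/(2b)}$ and $|\nabla|^\delta D_{e^{bt}}=(1-2bs)^{\delta/2}D_{e^{bt}}|\nabla|^\delta$, so the integral defining $|\nabla|^\delta R_b^+(z)f$ acquires the scalar weight $(1-2bs)^{-1+\delta/2-iz/(2b)}$, whose modulus $(1-2bs)^{-1+\delta/2+\Im z/(2b)}$ is integrable near the endpoint $s=\tfrac1{2b}$ precisely under the hypothesis $\tfrac{\Im z}{b}+\delta>0$, with $\int^{1/(2b)}(1-2bs)^{-1+\delta/2+\Im z/(2b)}\,ds\lesssim(b\delta+\Im z)^{-1}$ — this is the source of the $(b\delta+\Im z)^{-1}$ factor.

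Splitting at $s_*:=s(b^{-1})\sim b^{-1}$: on the long-time part $s\in[s_*,\tfrac1{2b})$ the dispersive bound \eqref{eqDeltabdispersive} gives $\|e^{it\Delta_b}\|_{L^{p'}\to L^p}\lesssim e^{-c(d,p)bt}$, and combining this (via $L^2_{\langle x\rangle}\hookrightarrow L^{p'}$, which is where the weight, and for $d=1,2$ the borderline value $\Im z=-b\min\{1,d/2\}$, are accommodated) with the scalar integrability above yields a bound $\lesssim(b\delta+\Im z)^{-1}\langle\Re z\rangle^{\max\{(2\delta-1)/4,0\}}\|f\|_{L^2_{\langle x\rangle}}$ — the $\langle\Re z\rangle$-gain on the $\delta$ derivatives arising as in the short-time analysis. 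On the short-time part $s\in(0,s_*)$ the scalar weight and the dilation $D_{(1-2bs)^{-1/2}}$ are $O(1)$, so the contribution reduces, up to bounded modulations, to a truncated free resolvent estimate $\||\nabla|^\delta\int_0^{s_*}e^{is\Delta}(\cdot)\,ds\|_{L^2}\lesssim(b^{-1}+\langle\Re z\rangle^{\max\{(2\delta-1)/4,0\}})\|\cdot\|_{L^2_{\langle x\rangle}}$: the summand $b^{-1}$ is the crude bound from the truncation length $s_*$ on low frequencies (dominant for $|\Re z|\lesssim1$), while $\langle\Re z\rangle^{\max\{(2\delta-1)/4,0\}}$ is a limiting-absorption-type bound for $|\nabla|^\delta(\Delta+z)^{-1}$ with a one-sided $\langle x\rangle$-weight, obtained from \eqref{eqDeltabdispersive} in the free regime $t\le b^{-1}$, the Kato smoothing estimate $\|\langle x\rangle^{-1}|\nabla|^{1/2}e^{is\Delta}f\|_{L^2_{s,x}}\lesssim\|f\|_{L^2}$ and its dual, and interpolation in $\delta$ (the loss arising only for $\delta>\tfrac12$, where the derivative gain beyond $|\nabla|^{1/2}$ is paid for in powers of $\langle\Re z\rangle$ since only one factor of the weight is spent).

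The genuinely delicate point is this short-time resolvent bound: it must be uniform in the spectral parameter while capturing the sharp exponent $\max\{(2\delta-1)/4,0\}$ of $\langle\Re z\rangle$ and, for $d=1,2$, remain valid down to $\Im z=-b\min\{1,d/2\}$, so in effect one re-derives a weighted dispersive/smoothing estimate for $e^{it\Delta_b}$ that is uniform over the whole admissible range of $z$. Everything else is bookkeeping of the scalar factors produced by the conjugation identity.
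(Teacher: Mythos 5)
Your proposal takes a genuinely different route from the paper. Whereas the paper works purely on the Fourier side, writing $\widehat{R_b^+(z)f}$ via the explicit one-dimensional radial integral \eqref{eqRb+f}, proving the quadratic weighted estimates \eqref{eqquaintest1}--\eqref{eqquaintest2}, and then splitting into low/high/resonance regions with integration by parts in the oscillatory phase, you go back to the physical-space semigroup picture, conjugate $e^{it\Delta_b}$ to the free group via $D_{e^{bt}}\circ e^{is(t)\Delta}$, change variables $t\mapsto s=s(t)$, and read the $(\Im z+b\delta)^{-1}$ factor off the integrability of the resulting scalar weight $(1-2bs)^{\delta/2-1+\Im z/(2b)}$. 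That conjugation identity is correct (one checks $\frac{d}{dt}\big[D_{e^{bt}}e^{is(t)\Delta}\big]=i\Delta_b D_{e^{bt}}e^{is(t)\Delta}$ with $s'(t)=e^{-2bt}$), the commutation $|\nabla|^\delta D_\lambda=\lambda^{-\delta}D_\lambda|\nabla|^\delta$ is correct, and the identification of the endpoint singularity at $s=\tfrac{1}{2b}$ with the hypothesis $\tfrac{\Im z}{b}+\delta>0$ is the right structural observation. This is a cleaner way of seeing \emph{where} the $(\Im z+b\delta)^{-1}$ comes from.

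However, there is a genuine gap, and you have in fact flagged it yourself. The entire weight-to-smoothness trade --- producing $\dot H^{\delta}$ output from $L^2_{\langle x\rangle}$ input with the precise loss $\langle\Re z\rangle^{\max\{(2\delta-1)/4,0\}}$ --- is asserted, not derived. Three specific issues: (i) your ``truncated free resolvent'' on $s\in(0,s_*)$ does not actually carry the free phase $e^{isz}$; the phase is $(1-2bs)^{-iz/(2b)}=e^{-\frac{iz}{2b}\ln(1-2bs)}$, which agrees with $e^{isz}$ only for $bs\ll 1$, so near $s\sim s_*\sim b^{-1}$ it is not the free resolvent and that approximation must be quantified. (ii) In the long-time region you invoke the $L^{p'}\to L^p$ dispersive bound \eqref{eqDeltabdispersive} and then claim the $\langle\Re z\rangle$-gain on the $\delta$ derivatives ``arises as in the short-time analysis''; since the short-time analysis is itself left open, this is circular, and it is unclear how to simultaneously insert $|\nabla|^\delta$ (which commutes through $e^{is\Delta}$ onto $f$ and hence does not help) and the $\langle x\rangle$-weight while integrating an $L^{p'}\to L^p$ bound. (iii) The exponent $\max\{(2\delta-1)/4,0\}$ is very specific; in the paper it drops out of the stationary-phase/resonance analysis of $\hat f_{res}$ and $\hat f_{high',2}$, and your combination of Kato smoothing, $L^{p'}\to L^p$ dispersive bounds and interpolation would need to be executed with care to hit precisely that number rather than a lossier one. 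As it stands this is a plausible program but not a proof; completing it looks comparable in effort to the paper's Fourier-side argument, which has the advantage of being entirely elementary (two weighted Hardy-type integral estimates plus integration by parts) and of delivering the sharp exponent with no auxiliary smoothing or embedding machinery.
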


\begin{proof}  For simplicity, we only prove the case for $R^+_b(z)$. 
From the boundedness of $R_b^+$ in \cite[Lemma 2.4]{li2023stability} and density argument, it suffices to estimate for Schwartz functions $f \in \mathscr{S}(\RR^{d})$, and the Fourier representation in \cite[Lemma 2.4]{li2023stability}
\be \label{eqRb+f} (\widehat{R_b^+(z)f})(r\omega) = \frac{i}{b}r^{-\frac d2 + \frac{\Im z}{b}} e^{\frac{ir^2}{2b}-\frac{i\Re z}{b} \ln r} \int_r^\infty \rho^{\frac d2 - 1 - \frac{\Im z}{b}} e^{-\frac{i\rho^2}{2b}+\frac{i\Re z}{b} \ln \rho} \hat{f}(\rho \omega) d\rho.
	    	\ee
is valid. To begin with, the $\delta = 0$ case of \eqref{eqressmooth1} follows from $\sigma(\Delta_b\big|_{L^2}) = \RR$ that 
\[ \| R_b^+(z) f \|_{L^2} \le (\Im z)^{-1} \| f \|_{L^2}. \]

\underline{1. Quadratic integral estimate.} We first claim the following two general estimates for $\mu > 0$, $A > 0$ and $F\in L^2([A, \infty))$, $G \in L^2([0, A])$: 
\bea
 \int_A^\infty r^{-1+2\mu} \left(\int_r^\infty \rho^{-\frac 12 -\mu} F(\rho) d\rho \right)^2dr &\le& \mu^{-2} \int_A^\infty F^2(\rho) d\rho. \label{eqquaintest1} \\
 \int_0^A r^{-1+2\mu} \left(\int_r^A \rho^{-\frac 12 -\mu} G(\rho) d\rho \right)^2dr &\le& \mu^{-2} \int_0^A G^2(\rho) d\rho.  \label{eqquaintest2}
\eea
Their proofs are similar so we only show \eqref{eqquaintest1}, and through rescaling we can further fix $A = 1$. Firstly, by symmetry of the quadratic term, we estimate 
\bee
 {\rm LHS \,\,of\,\,}\eqref{eqquaintest1} &=& 2 \int_1^\infty dr \int_r^\infty d\rho \int_\rho^\infty d\tau r^{-1+2\mu} \rho^{-\frac 12 -\mu} \tau^{-\frac 12 - \mu} F(\rho) F(\tau)\\
 &\le& 2 \int_1^\infty d\tau \int_1^\tau d\rho \left(\int_1^\rho r^{-1+2\mu} dr \right)\rho^{-\frac 12 -\mu} \tau^{-\frac 12 - \mu} |F(\rho)| |F(\tau)|\\
 &\le& \mu^{-1} \int_1^\infty d\tau \int_1^\tau d\rho \rho^{-\frac 12 + \mu} |F(\rho)| \tau^{-\frac 12 - \mu} |F(\tau)|.
\eee
Next, we change the variable as $\tau = e^t$ and $\rho = e^s$, and denote $f(t) := e^{\frac 12 t}|F(e^t)|\mathbbm{1}_{[0, \infty)}(t)$, then a further application of Young's inequality leads to
\[ {\rm LHS \,\,of\,\,}\eqref{eqquaintest1}  \le \mu^{-1}  \left( f , f * (e^{-\mu(\cdot)}\mathbbm{1}_{[0, \infty)}) \right)_{L^2(\RR)} \le \mu^{-2} \| f \|_{L^2(\RR)}^2.\]
That is \eqref{eqquaintest1} noticing that $\| f \|_{L^2(\RR)}^2 = \int_0^\infty |F(e^t)|^2 e^t dt = \int_1^\infty |F(\rho)|^2 d\rho$.

\mbox{}

\underline{2. Proof of \eqref{eqressmooth1} when $|\Re z| \le 16$.} We first derive a rough estimate to verify \eqref{eqressmooth1} with $|\Re z| \le 16$.  From \eqref{eqRb+f}, we plug in the smooth truncation $\chi$ from \eqref{eqdefchiR} and further integrate by parts over $e^{-i\frac{\rho^2}{2b}}$ for the high-frequency integral to obtain
\be
  (\widehat{R_b^+(z)f})(r\omega) = \hat f_{low} (r\omega) +\hat f_{high, 1}(r\omega) +\hat f_{high, 2}(r\omega) \label{eqRb+zfdecomp1}   \ee
  where
  \begin{align} 
  \hat f_{low} &= \frac{i}{b}r^{-\frac d2 + \frac{\Im z}{b}} e^{\frac{ir^2}{2b}-\frac{i\Re z}{b} \ln r} \int_r^2 \rho^{\frac d2 - 1 - \frac{\Im z}{b}} e^{-\frac{i\rho^2}{2b}+\frac{i\Re z}{b} \ln \rho} \chi(\rho) \hat{f}(\rho \omega) d\rho \nonumber
  \\
  \hat f_{high, 1} &= r^{-2} (1-\chi(r)) \hat f(r\omega) \nonumber \\
  \hat f_{high, 2} &= r^{-\frac d2 + \frac{\Im z}{b}} e^{\frac{ir^2}{2b}-\frac{i\Re z}{b} \ln r} \int_r^\infty \rho^{\frac d2 - 2 - \frac{\Im z}{b}} e^{-\frac{i\rho^2}{2b}+\frac{i\Re z}{b} \ln \rho}(1-\chi(\rho)) \nonumber \\
  &\qquad \qquad \cdot \left(\frac{-\chi'}{1-\chi} + (\frac d2 - 2 + i\frac{z}{b}) \rho^{-1} + \pa_\rho \right) \hat{f} (\rho \omega) d\rho.  \nonumber
\end{align}
The estimate of $f_{low}$ follows directly from \eqref{eqquaintest2} with $F(\rho) := \rho^{\frac{d-1}{2} + \delta} |\hat f(\rho \omega)|$ and $\mu = \frac{\Im z}{b} + \delta > 0$ that
\bea
 && \| f_{low}\|_{\dot H^\delta}^2 \lesssim b^{-2} \int_{\SS^{d-1}} \int_0^2 r^{-1 + 2\frac{\Im z}{b} + 2\delta} \left(\int_r^2 \rho^{\frac d2 - 1- \frac{\Im z}{b}} |\hat f(\rho \omega)| d\rho \right)^2 dr d\omega \nonumber \\
 &\le& (\Im z + b\delta)^{-2} \int_{\SS^{d-1}} \int_0^2 |\hat f(r \omega)|^2 r^{d-1+2\delta} dr d\omega \lesssim (\Im z + b\delta)^{-2} \| f \|_{L^2}^2. \label{eqflowest}
\eea
Similarly, we apply \eqref{eqquaintest1} to estimate $f_{high, 2}$ 
\bea
&& \| f_{high, 2}\|_{\dot H^\delta}^2 \nonumber \\ &\lesssim&  (1+ b^{-1} |z|)^2 \int_{\SS^{d-1}}\int_0^\infty r^{-1+2\frac{\Im z}{b} +2\delta} \left( \int_{\max\{1, r  \}}^\infty \rho^{\frac d2 - 2 - \frac{\Im z}b } (|\hat f| + |\pa_\rho \hat f|)(\rho \omega)d\rho \right)^2 drd\omega \nonumber \\
&\lesssim&  (1+ b^{-1} |z|)^2 (b^{-1}\Im z + \delta)^{-2} \int_{|\xi| \ge 1} (|\hat f|^2 + |\nabla_\xi \hat f|^2) |\xi|^{-2+2\delta}d\xi \nonumber \\
&\lesssim&  (b + |z|)^2\cdot  (\Im z + b\delta)^{-2} \| f \|_{L^2_{\la x \ra}}^2.\label{eqfhigh2est}
\eea
Here we exploited $\delta \le 1$. 
Apparently $\| f_{high, 1} \|_{\dot H^\delta} \lesssim \| f \|_{L^2}$. They lead to \eqref{eqressmooth1} when $|\Re z| \le 16$.

\mbox{}

\underline{3. Proof of \eqref{eqressmooth1} when $|\Re z| \ge 16$.}

\textit{Case 1. $\Re z \ge 16$.} Besides $\chi$, we define another smooth truncation near the stationary point $\sqrt{\Re z}$ as $\tilde \chi = \left| \begin{array}{ll}
    1 & |r - \sqrt{\Re z}| \le 1, \\
    0 & |r - \sqrt{\Re z}| \ge 2.
\end{array}\right.$ Extracting the resonance part and integrating by parts over $\rho^{-\frac{\Im z}{b}} e^{-i\frac{\rho^2}{2b}+ i \frac{\Re z}{b}\ln \rho}$ result in
\be
  (\widehat{R_b^+(z)f})(r\omega) = \hat f_{low} (r\omega) + \hat f_{res}(r\omega) +\hat f_{high', 1}(r\omega) +\hat f_{high', 2}(r\omega)  \ee
  where $f_{low}$ is as in the decomposition \eqref{eqRb+zfdecomp1} and the rest terms are
  \begin{align*} 
  \hat f_{res} &= \frac{i}{b}r^{-\frac d2 + \frac{\Im z}{b}} e^{\frac{ir^2}{2b}-\frac{i\Re z}{b} \ln r} \int_r^\infty \rho^{\frac d2 - 1 - \frac{\Im z}{b}} e^{-\frac{i\rho^2}{2b}+\frac{i\Re z}{b} \ln \rho} \tilde \chi(\rho) \hat{f}(\rho \omega) d\rho
  \\
  \hat f_{high', 1} &= \left(r^2 - z \right)^{-1} (1-\chi(r) - \tilde \chi(r)) \hat f(r\omega) \\
  \hat f_{high', 2} &= r^{-\frac d2 + \frac{\Im z}{b}} e^{\frac{ir^2}{2b}-\frac{i\Re z}{b} \ln r} \int_r^\infty \rho^{\frac d2 - \frac{\Im z}{b}} e^{-\frac{i\rho^2}{2b}+\frac{i\Re z}{b} \ln \rho}  \frac{1-\chi- \tilde \chi}{\rho^2 - z } \\
  &\qquad \qquad \cdot \left(\frac{-\chi' - \tilde \chi'}{1-\chi - \tilde \chi} + \frac d2 \rho^{-1} - \frac{2\rho}{\rho^2 - z}
   + \pa_\rho \right) \hat{f} (\rho \omega) d\rho. 
\end{align*}
Here we have used $\supp \chi \cap \supp \tilde \chi = \emptyset$. Then the term $f_{low}$ has been estimated in \eqref{eqflowest}. For $f_{res}$, by Cauchy-Schwartz, 
\bee
 \|  f_{res} \|_{\dot H^\delta}^2 &\lesssim& b^{-2} \int_{\SS^{d-1}} \int_0^{\sqrt{\Re z} + 2} r^{-1+2\frac{\Im z}{b} + 2\delta} \left(\int_{\sqrt{\Re z} - 2}^{\sqrt{\Re z} + 2} \rho^{\frac d2 - 1 - \frac{\Im z}{b} } |\hat f| d\rho  \right)^2 dr d\omega \\
 &\lesssim& b^{-2} (b^{-1} \Im z + \delta)^{-1} (\sqrt{\Re z})^{2(b^{-1} \Im z + \delta)} \cdot \| f \|_{L^2}^2  (\sqrt{\Re z})^{-1-2b^{-1} \Im z }  \\
 &\lesssim& b^{-1} (\Im z + b\delta)^{-1} \| f \|_{L^2}^2 (\Re z)^{-\frac 12+ \delta}
\eee
where we used $\frac{\Im z}{b} + \delta > 0$. Notice that $\sup_{r \ge 0} \left| r^{\delta} (r^2 - z)^{-1} (1-\chi -\tilde \chi)\right| \lesssim 1$ for $\delta \le 1$ we easily have $\| f_{high', 1} \|_{\dot H^\delta} \lesssim \| f \|_{L^2}$. Finally, we estimate 
\bee
 \| f_{high', 2} \|_{\dot H^\delta}^2 &\lesssim& \left\| r^{-\frac d2 + \frac{\Im z}{b} + \delta} \int_{\max \{ 1, r \}}^\infty \rho^{\frac d2 - 2 - \frac{\Im z}{b} } (|\hat f| + |\pa_\rho \hat f|) d\rho \right\|_{L^2(\RR^d)}^2\\
 &+& \left\| r^{-\frac d2 + \frac{\Im z}{b} + \delta} \int_{\frac 12 \sqrt{\Re z}}^{2\sqrt{\Re z}} \rho^{\frac d2 - 1 - \frac{\Im z}{b} } \la\rho - \sqrt z \ra^{-1} (|\hat f| + |\pa_\rho \hat f|) d\rho \right\|_{L^2(B_{2\sqrt{\Re z}})}^2 \\
 &\lesssim& \left(\frac{\Im z}{b} + \delta\right)^{-2} \cdot \left(1 + (\Re z)^{-\frac 12+ \delta}\right) \| f \|_{L^2_{\la x \ra}}^2
\eee
Here the first term can be estimated like $f_{high, 2}$ in \eqref{eqfhigh2est} without the $\frac zb$ coefficient, while the second term is bounded by Cauchy-Schwartz using $\int_{\frac 12 \sqrt{\Re z}}^{2\sqrt{\Re z}} \la \rho - \sqrt z \ra^{-2} d\rho \lesssim 1$. These estimates conclude \eqref{eqressmooth1} with $\Re z \ge 16$.

\textit{Case 2. $\Re z \le - 16$.} In this case, there is no stationary phase. So we only apply the high-low truncation $\chi$ and integrate by parts over $\rho^{\frac d2 - 1 - \frac{\Im z }{b}} e^{-i\frac{\rho^2}{2b}+ i \frac{\Re z}{b}\ln \rho}$. A similar and simpler computation like Case 1 will lead to \eqref{eqressmooth1}. 
\end{proof}

Finally, we are in place to prove Lemma \ref{lemapriorieigen}.

\begin{proof}[Proof of Lemma \ref{lemapriorieigen}]  

(1) Suppose $\calH_b Y_0 = \l Y_0$ with $Y_0 \in \dot H^\sigma$ for $\sigma \le s_c + b^2 \ll 1$. Apply the resolvent $\calS_b^{-}(\l + ibs_c)$ to the eigen equation yields
\be  Y_0= - \calS_b^-(\l + ibs_c) \calV Y_0,  \label{eqeigeneqY01} \ee
and using the commutator relation \cite[Lemma 2.11]{li2023stability}, 
\be \nabla^n Y_0 = - \calS_b^-(\l + ib(s_c-n)) \nabla^n (\calV Y_0). \label{eqeigeneqY02}
\ee
Notice that from Proposition \ref{propQbasymp} and Corollary \ref{corodiffest}, $|\pa_r^n \calV| \lesssim_n \la r \ra^{-2-n}$ for any $n \ge 0$, and that $\pm\Im(-1\pm(\l+ibs_c)) > - b\sigma > -\frac 12 b$. We apply \eqref{eqressmooth1} with $\delta = 1$ to \eqref{eqeigeneqY01} and \eqref{eqeigeneqY02} to see
\bee
 \| Y_0 \|_{\dot H^{n+1}} \lesssim b^{-1} \la \Re \l \ra^{\frac 12} \| \nabla^n (\calV Y_0) \|_{L^2_{\la x \ra}} \lesssim_n b^{-1} \la \Re \l \ra^{\frac 12} \sum_{k = 0}^n \| \nabla^k Y_0 \|_{L^2_{\la x \ra^{-n+k-1}}},\quad \forall\, \,n \ge 0.
\eee
This implies the smoothness $Y_0 \in \cap_{n \ge 0} \dot H^{n+1}$. Combining the $n = 1$ case with $\la \Re \l\ra \lesssim b^{-d-1} \le b^{-2d}$ from  Lemma \ref{lemapriorieigenvalue}, we obtain \eqref{eqeigensmooth}.

\mbox{}

\underline{(2)} As in  \cite[Lemma 2.1]{MR4250747}, for the linear scalar equation
 \[ V'' + \left( \frac{b^2 r^2}{4} - E\right) V = 0 ,\quad {\rm for}\,\,\, |\Re E| \le 2M b^{-d-1},\,\,\, \Im E \in [-2M, 2b^2],  \]
with $M$ from the previous step, we define the approximate solutions $V^\pm_{b, E}$ for $r \ge b^{-d-3}$ (for simplicity, we won't keep the subscript $b, E$ below unless necessary)
\be
\left| \begin{array}{l}
  V^\pm(r) =  b^\frac 12 (2b)^{\mp \frac{\Im E}{b}} e^{\pm i \frac{1 +2\ln(2b)}{2b} + \frac{\theta^\pm(s)}{b}},\quad \theta^\pm = \pm i \theta_0 + b\theta_1^\pm,\quad s = br, \\
  \theta_0(s) = \frac s4 \sqrt{s^2 - 4\Re E} - \ln \left( \sqrt{s^2 - 4\Re E} + s \right) \\
  \theta_1^\pm (s) = - \frac 14 \ln \left(s^2 - 4\Re E\right) \pm \frac{\Im E}{b} \ln \left( \sqrt{s^2 - 4\Re E} + s \right) 
 \end{array} \right. \label{eqconstructVpm}
\ee
Note that here $s \ge b^{-d-2} \gg 4\Re E$. Then $V^\pm: [b^{-d-3}, \infty)$ satisfies
\begin{enumerate}
\item Equation and residuals: 
\be (V^\pm)'' +  \left( \frac{b^2 r^2}{4} - E\right) V^\pm = b^2 f^\pm(br) V^\pm \ee
where 
\be
  f^\pm(s) = [(\theta_1^\pm)']^2 + (\theta_1^\pm)'';\quad \left|\pa_s^k f^\pm(s)\right| \lesssim_k b^{-2} s^{-2-k}\,\,{\rm for}\,\,s \ge b^{-d-2}. \label{eqfpmest}
\ee
\item Asymptotics: for $r \in [b^{-d-3}, \infty)$, 
\be
  |V^\pm| =  r^{-\frac 12 \pm \frac{\Im E}{b}} \left( 1 + O(|\Re E|b^{-2}r^{-2} ) \right) \quad 
  \left|(V^\pm) \mp i \frac{br}{2}V^\pm\right| \lesssim |\Re E| b^{-1} r^{-1}  |V^\pm|.
  \label{eqVpmest}
\ee
\item Wronskian: $\calW(V^+, V^-) = V^+ (V_-)' - (V^+)' V^- = - ib - \frac{2b \Im E}{b^2 r^2 - 4\Re E}$. 
\end{enumerate}
Here the constants for residual terms in \eqref{eqfpmest}, \eqref{eqVpmest} all depend on $M$. 

\mbox{}

Next, we mimic \cite[Proposition 2.1]{MR4250747} to construct admissible fundamental solutions of $(\calH_b - \l)Y_0 = 0$ on $[b^{-d-3}, \infty)$ via the method of variation of constants. Let
\be \Phi_0 := \left( \begin{array}{c} \phi_+ \\ \phi_- \end{array} \right) 
=  r^{\frac{d-1}{2}}
\left( \begin{array}{c}  e^{i\frac{br^2}{4}} Y_0^1 \\ e^{i\frac{br^2}{4}} \overline{Y_0^2} \end{array} \right), \label{eqdefphipm} \ee
then they satisfy
\[\left| \begin{array}{c}
     \left(\pa_r^2 - E_+ + \frac{b^2 r^2}{4}\right) \phi_+ = L(\phi_+, \bar \phi_-)
     \\
     \left(\pa_r^2 - E_- + \frac{b^2 r^2}{4}\right) \phi_- = L(\phi_-, \bar \phi_+)
\end{array}\right. \]
where $E_\pm = 1 \pm (\l + ibs_c)$, and the linear operator $L(f, g) = \left(\frac{(d-1)(d-3)}{4r^2} - W_{1,b}\right) f - e^{i\frac{br^2}{2}} W_{2,b} g$. 
With $V^\pm_{b, E}$ and $f^\pm_{b, E}$ from \eqref{eqconstructVpm} and \eqref{eqfpmest}, we look for $(\phi_+, \phi_-)$ of the form
\be
 \left| \begin{array}{l}
     \pa_r^k \phi_+ = \iota^+_+ \pa_r^k V^+_{b, E_+} + \iota^-_+ \pa_r^k V^-_{b, E_+}, \quad k = 0, 1;\\
     \pa_r^k \phi_- = \iota^+_- \pa_r^k V^+_{b, \bar E_-} + \iota^-_- \pa_r^k V^-_{b, \bar E_-}, \quad k = 0, 1.
 \end{array}\right.
\ee
Plugging this ansatz into the equation yields the evolutionary equation of $\iota^{\pm_1}_{\pm_2}$ 
\be
\left| \begin{array}{c}
  (\iota^\pm_+)' = \mp \frac{V^\mp_{b, E_+}}{\calW(V^+_{b, E_+}, V^-_{b, E_+})}\left(L(\phi_+, \bar \phi_-) + \sum_\pm b^2 \iota^\pm_+ f^\pm_{b, E_+}(br) V^\pm_{b, E_+} \right) \\
  (\iota^\pm_-)' = \mp \frac{V^\mp_{b, \bar E_-}}{\calW(V^+_{b, \bar E_-}, V^-_{b, \bar E_-})}\left(L(\phi_-, \bar \phi_+) + \sum_\pm b^2 \iota^\pm_- f^\pm_{b, \bar E_-}(br) V^\pm_{b, \bar E_-} \right)
\end{array}\right. \label{eqlpmpm1}
\ee
From the asymptotics of $V^\pm$ \eqref{eqVpmest}, definition of $(\phi_+, \phi_-)$ \eqref{eqdefphipm}, and that $Y_0 \in \dot H^1(\RR^d)$ by Lemma \ref{lemapriorieigen} (1), we have $\a^-_\pm \lim_{r\to \infty} \iota^-_\pm(r) = 0$, and $Y_0$ is determined by $\a^+_\pm :=\lim_{r\to\infty} \iota^+_\pm(r) \in \CC$. 

Without loss of generality, we can normalize $\sum_\pm |\a_\pm|^2 = 1$. Integrate \eqref{eqlpmpm1} from $r \to \infty$, 
\be
  \left| \begin{array}{c}
  \iota^\pm_+ = \a^\pm_+ \mp \int_r^\infty \left[ \frac{V^\mp_{b, E_+}}{\calW(V^+_{b, E_+}, V^-_{b, E_+})}\left(L(\phi_+, \bar \phi_-) + \sum_\pm b^2 \iota^\pm_+ f^\pm_{b, E_+}(br') V^\pm_{b, E_+} \right) \right] dr' \\
  \iota^\pm_- = \a^\pm_- \mp \int_r^\infty \left[\frac{V^\mp_{b, \bar E_-}}{\calW(V^+_{b, \bar E_-}, V^-_{b, \bar E_-})}\left(L(\phi_-, \bar \phi_+) + \sum_\pm b^2 \iota^\pm_- f^\pm_{b, \bar E_-}(br') V^\pm_{b, \bar E_-} \right) \right] dr'.
\end{array}\right. \label{eqlpmpm2}
\ee
We solve $\iota^{\pm_1}_{\pm_2}$ by showing RHS is a $\RR$-linear contraction mapping for $\vec \iota := (\iota^+_+, \iota^-_+, \iota^+_-, \iota^-_-): [b^{-d-3}, \infty) \to \CC^4$ with the norm 
\[ \| \vec \iota\|_{\calA_{b, \l}} := \max_\pm \left\{ \| \iota^+_\pm \|_{L^\infty([b^{-d-3}, \infty))}, \| r^{-2\frac{\min \{\Im \l +bs_c, 0\} }{b}} \iota^-_\pm \|_{L^\infty([b^{-d-3}, \infty))} \right\} \]
Indeed, noticing that $\Im E_+ = \Im \bar E_- = \Im \l + bs_c < \frac 14$ and that $|W_{j, b}| \lesssim r^{-2}$ for $j = 1, 2$ on $[b^{-2}, \infty)$ from Proposition \ref{propQbasymp}, we easily see
\bee
 \left| \left(L(\phi_+, \bar \phi_-) + \sum_\pm b^2 \iota^\pm_+ f^\pm_{b, E_+}(br') V^\pm_{b, E_+} \right)\right| \lesssim r^{-2+ \frac{\Im \l}{b}} \| \vec \l \|_{\calA_{b, E}}
\eee
Hence a direct integration implies that RHS of \eqref{eqlpmpm2} is a bounded operator on $\calA_{b, \l}$ with operator norm bounded by $O(b^{-1}b^{(-d-3)\cdot(-\frac 32)}) = O(b^{\frac 32 d+\frac 72}) \ll 1$. 

Consequently, for any $Y_0$, there exists $C_{Y_0} > 0$ such that 
\[  \frac 12 C_{Y_0} r^{-\frac d2 + s_c + \frac{\Im \l}{b}} \le |Y_0 (r)| \le 2 C_{Y_0} r^{-\frac d2 + s_c + \frac{\Im \l}{b}},\quad \forall \, r \ge b^{-d-3}.   \]
Thus, for $\a \in \{ 1, \frac{11}{10}\}$, we have 
\[ \| r^{-\a} Y_0 \|_{L^2(\{ | x|\ge b^{-d-3}\})}^2 \sim C_{Y_0} \int_{b^{-d-3}}^\infty r^{-d+2(s_c + \frac{\Im \l}{b})} r^{d-1-2\a} dr = C_{Y_0} \frac{b^{2(d+3)(\a-s_c - \frac{\Im \l}{b})}}{2(\a-s_c - \frac{\Im \l}{b})},\]
and hence 
\[ \| r^{-1} Y_0 \|_{L^2(\{ | x|\ge b^{-d-3}\})}^2 \sim \| r^{-\frac{11}{10}}Y_0 \|_{L^2(\{ | x|\ge b^{-d-3}\})}^2 b^{-\frac{d+3}{5}}\]
Finally, we can compute 
\bee
 \| r^{-1} Y_0 \|_{L^2(\RR^d - B_{4b^{-1}})}^2 &\lesssim& \| r^{-\frac{11}{10}}b^{-\frac{d+3}{10}} Y_0 \|_{L^2(B_{b^{-d-3}})}^2 + \| r^{-\frac{11}{10}}Y_0 \|_{L^2(\{ | x|\ge b^{-d-3}\})}^2 b^{-\frac{d+3}{5}} \\
 &\lesssim& b^{-\frac{d+3}{5}} \| r^{-\frac{11}{10}} Y_0 \|_{L^2(\RR^d)}^2.
\eee
The $d = 2$ case implies \eqref{eqdelocalrad}.

\end{proof}

\bibliographystyle{plain}
\bibliography{Bib-1}

\end{document}